\def\Ddots{\mathinner{\mkern1mu\raise\p@
\vbox{\kern7\p@\hbox{.}}\mkern2mu
\raise4\p@\hbox{.}\mkern2mu\raise7\p@\hbox{.}\mkern1mu}}
\def\Xint#1{\mathchoice
{\XXint\displaystyle\textstyle{#1}}%
{\XXint\textstyle\scriptstyle{#1}}%
{\XXint\scriptstyle\scriptscriptstyle{#1}}%
{\XXint\scriptscriptstyle\scriptscriptstyle{#1}}%
\!\int}
\def\XXint#1#2#3{{\setbox0=\hbox{$#1{#2#3}{\int}$}
\vcenter{\hbox{$#2#3$}}\kern-.5\wd0}}
\def\dashint{\Xint-}
\def\Xint#1{\mathchoice
   {\XXint\displaystyle\textstyle{#1}}%
   {\XXint\textstyle\scriptstyle{#1}}%
   {\XXint\scriptstyle\scriptscriptstyle{#1}}%
   {\XXint\scriptscriptstyle\scriptscriptstyle{#1}}%
   \!\int}
\def\XXint#1#2#3{{\setbox0=\hbox{$#1{#2#3}{\int}$}
     \vcenter{\hbox{$#2#3$}}\kern-.5\wd0}}
\def\avgint{\Xint-}
\newtheorem{theorem}{Theorem}[section]
\newtheorem{conjecture}[theorem]{Conjecture}
\newtheorem{corollary}[theorem]{Corollary}
\theoremstyle{definition}
\newtheorem{lemma}[theorem]{Lemma}
\newtheorem{prop}[theorem]{Proposition}
\newtheorem{remark}[theorem]{Remark}
\def\al{{\alpha}}
\def\R{\mathbb R}
\def\Z{\mathbb Z}
\def\ra{\rightarrow}
\def\bey{\begin{eqnarray*}}
\def\eey{\end{eqnarray*}}
\def\D{{\mathscr D}}
\def\Ps{{\mathscr P}}
\def\Q{{\mathcal Q}}
\def\Ca{{\mathcal C}}
\def\MM{{\mathscr M}}
\def\Sp{{\mathcal S}}
\begin{document}

\title{One and two weight norm inequalities for Riesz potentials}
\author{David Cruz-Uribe, SFO and Kabe Moen}


 \address{David Cruz-Uribe, SFO, Department of Mathematics, Trinity College, Hartford, CT 06106, USA}
\email{david.cruzuribe@trincoll.edu}

 \address{Kabe Moen, Department of Mathematics, University of Alabama, Tuscaloosa, AL 35487-0350}
\email{kabe.moen@ua.edu}

\thanks{The first author is supported by the Stewart-Dorwart faculty
  development fund at Trinity College and by
  grant MTM2012-30748 from the Spanish Ministry of Science and
  Innovation.  The second author is supported by NSF Grant 1201504}

\subjclass[2010]{42B25, 42B30, 42B35}

\keywords{Riesz potentials, fractional integral operators, Muckenhoupt
weights, sharp constants, two weight inequalities, bump conditions,
corona decomposition}

\date{November 8, 2012}

\maketitle

\begin{abstract}
We consider weighted norm inequalities for the Riesz potentials
$I_\alpha$, also referred to as fractional integral operators.  First we prove mixed $A_p$-$A_\infty$ type estimates in the spirit of~\cite{hytonen-lacey-IUMJ,hytonen-perez-analPDE,lacey-HJM}.  Then we prove strong and weak type inequalities in the case
$p<q$ using the so-called log bump conditions.  
These results complement the strong type inequalities of
P\'erez~\cite{MR1291534} and answer a conjecture from~\cite{MR2797562}.  For
both sets of results our main tool is a corona decomposition adapted
to fractional averages.
\end{abstract}

\section{Introduction} 
In this paper we prove one and two weight norm inequalities for the
Riesz potentials (also referred to as fractional integral operators):
$$I_\al f(x)=\int_{\R^n}\frac{f(y)}{|x-y|^{n-\al}}, \qquad 0<\al<n.$$
Each of our results is analogous to a corresponding result for
Calder\'on-Zygmund operators, and so our work parallels recent
development in the study of sharp one and two weight norm inequalities
for these operators.  Our results are linked by a common technique
that also originated in the study of singular integrals: a corona
decomposition adapted to the fractional case.

The natural scaling of the operator $I_\al$ shows that if
$I_\al:L^p(\R^n)\ra L^q(\R^n)$, then $p$ and $q$ must satisfy the Sobolev
relationship
\begin{equation}\label{eqn:sobolev}
\frac1p-\frac1q=\frac{\al}{n},
\end{equation}
and so this is a natural condition to assume when studying one weight
inequalities.    
Muckenhoupt and Wheeden~\cite{muckenhoupt-wheeden74}
proved the following result.  

\begin{theorem} \label{thm:MW-old}
Given $\alpha$, $0<\alpha<n$ and $p$, $1<p<n/\alpha$, define $q$ by
\eqref{eqn:sobolev}.   Then the following are equivalent:
\begin{enumerate}

\item $w\in A_{p,q}$:  
\[ [w]_{A_{p,q}} = \sup_Q \left(\avgint_Q w(x)^q\,dx \right)^{1/q}
\left(\avgint_Q w(x)^{-p'}\,dx \right)^{1/p'} < \infty; \]

\item $I_\alpha$ satisfies the weak type inequality
\[ \sup_t t\|w\chi_{\{x : |I_\alpha f(x)|>t\}}\|_q \leq 
C\|fw\|_p; \]

\item $I_\alpha$ satisfies the strong type inequality
 \[ \|(I_\alpha f)w\|_q \leq C\|fw\|_p. \]

\end{enumerate}
\end{theorem}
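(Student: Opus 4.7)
The implication (3)$\Rightarrow$(2) is immediate from Chebyshev. For (2)$\Rightarrow$(1), I would fix a cube $Q$, set $\sigma = w^{-p'}$, and test the weak type inequality on $f = \sigma\chi_Q$. For $x\in Q$ one has $I_\al f(x) \gtrsim |Q|^{\al/n-1}\sigma(Q)$, so taking $t = c|Q|^{\al/n-1}\sigma(Q)$ forces $\{|I_\al f|>t\}\supset Q$; substituting into the weak type inequality and using the Sobolev relation $1/p-1/q=\al/n$ to collapse the powers of $|Q|$ recovers exactly $[w]_{A_{p,q}}<\infty$.

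The substance of the theorem is (1)$\Rightarrow$(3). My plan is to pass to a dyadic model adapted to fractional averages. A standard shifted-dyadic argument yields a pointwise domination of the form
\[
I_\al f(x) \lesssim \sum_{j} \sum_{Q\in\D^j} |Q|^{\al/n} \avgint_Q |f|\,\chi_Q(x),
\]
where $\D^1,\dots,\D^{3^n}$ are finitely many translated dyadic grids. On each grid I would run a Calder\'on--Zygmund stopping-time procedure with threshold on the \emph{fractional} average $|Q|^{\al/n-1}\int_Q |f|$, extracting a sparse subfamily $\Sp\subset\D^j$ equipped with pairwise disjoint major subsets $E_Q\subset Q$ satisfying $|E_Q|\ge \tfrac12|Q|$. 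By duality this reduces the strong type bound to a sparse bilinear estimate of the shape
\[
\sum_{Q\in\Sp} |Q|^{1+\al/n}\Big(\avgint_Q f\Big)\Big(\avgint_Q g\Big) \lesssim \|fw\|_p\,\|g/w\|_{q'}
\]
for nonnegative $f,g$. Rewriting the inner averages with respect to the weighted measures $w^p\,dx$ and $\sigma\,dx$, applying H\"older cube-by-cube against $[w]_{A_{p,q}}$, and summing over the disjoint $E_Q$ against the dyadic fractional maximal operator closes the estimate.

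I expect the main obstacle to lie in calibrating this stopping-time construction: unlike the singular-integral case, the relevant averages carry an extra factor $|Q|^{\al/n}$, so the principal-cube selection has to be set up in such a way that both sparseness and the $A_{p,q}$-balance survive the fractional scaling. Once this corona decomposition adapted to $I_\al$ is in place---precisely the common tool promised in the introduction---the remaining ingredients (duality, H\"older, and the $L^p(w^p)\to L^q(w^q)$ boundedness of the dyadic fractional maximal operator under $A_{p,q}$) follow by routine computation.
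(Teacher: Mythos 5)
The paper does not actually prove this theorem; it is the classical result of Muckenhoupt and Wheeden \cite{muckenhoupt-wheeden74}, cited without proof. However, the dyadic machinery developed in Sections~\ref{preliminaries}--\ref{sec3} (Propositions~\ref{dyadicbound}, \ref{prop:sparsefrac}, \ref{prop:lsut}, and Theorem~\ref{thm:maintest}) does reprove and sharpen the implication (1)$\Rightarrow$(3), so a comparison is still meaningful.

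Your (3)$\Rightarrow$(2) and (2)$\Rightarrow$(1) are standard and correct. For (2)$\Rightarrow$(1) the test function $f=\sigma\chi_Q$ gives $\|fw\|_p=\sigma(Q)^{1/p}$, and the Sobolev relation collapses $|Q|^{\al/n-1}=|Q|^{-1/q-1/p'}$, recovering $[w]_{A_{p,q}}$; one only needs the routine care that $\sigma(Q)\in(0,\infty)$.

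For (1)$\Rightarrow$(3) your broad strategy---shifted dyadic grids, a stopping-time/sparse reduction, then a bilinear estimate---is exactly the paper's. But the sentence ``applying H\"older cube-by-cube against $[w]_{A_{p,q}}$, and summing over the disjoint $E_Q$ against the dyadic fractional maximal operator closes the estimate'' conflates two distinct routes and glosses over where the real work is. If you \emph{stay in Lebesgue averages}, the clean closure is: on $E_Q$ one has $|Q|^{\al/n}\avgint_Q|f|\le M^\D_\al f$ and $\avgint_Q|g|\le M^\D g$, so disjointness gives $\sum_Q|Q|^{1+\al/n}\avgint_Q f\avgint_Q g\lesssim\int M^\D_\al f\cdot M^\D g\,dx$, and then H\"older plus the boundedness $M^\D_\al:L^p(w^p)\to L^q(w^q)$ (the easy Muckenhoupt--Wheeden maximal estimate) and $M^\D:L^{q'}(w^{-q'})\to L^{q'}(w^{-q'})$ (which requires checking $w^{-q'}\in A_{q'}$, a consequence of $A_{p,q}$ via Jensen) finish the job; no cube-by-cube use of $[w]_{A_{p,q}}$ is made. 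If instead you \emph{switch to $u\,dx$ and $\sigma\,dx$ averages} (as the paper does, and as your ``weighted measures'' phrase suggests---note $w^p\,dx$ should be $w^q\,dx=u\,dx$), then after applying $A_{p,q}$ cube-by-cube and H\"older you are left with sums of the form $\sum_Q\sigma(Q)(\avgint_Q h\,d\sigma)^p$ and $\sum_Q u(Q)(\avgint_Q k\,du)^{q'}$. These are Carleson embeddings whose Carleson condition $\sum_{Q\subset R}\sigma(Q)\lesssim\sigma(R)$ does \emph{not} follow from sparseness alone; it requires $\sigma\in A_\infty$ (and likewise $u\in A_\infty$), which is a genuine additional consequence of $w\in A_{p,q}$ that you do not invoke. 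This $A_\infty$ input, packaged through the paper's fractional Carleson embedding (Lemma~\ref{Carleson}) and the corona decomposition, is precisely the mechanism driving Theorem~\ref{thm:maintest}, and omitting it leaves a gap in your argument as written. Either make the Lebesgue-average route explicit, or state clearly that you use $u,\sigma\in A_\infty$ to run the Carleson embedding.
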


More recently, the second author with Lacey, P\'erez and
Torres~\cite{MR2652182} proved sharp bounds for these inequalities in
terms of the $[w]_{A_{p,q}}$ constant.  This question was motivated by
the corresponding problem for Calder\'on-Zygmund singular integrals,
which has been studied intensively for more than a decade, and was
recently solved in full generality by Hyt\"onen~\cite{hytonenP2010}.
For the complete history of the problem we also refer the reader to
\cite{dcu-martell-perez,MR2657437, Lern2012, lerner-IMRN2012, PTV2010} and
the references they contain.

The problem of sharp constants for Riesz potentials is more tractable if we reformulate
Theorem~\ref{thm:MW-old} in terms of the Muckenhoupt $A_p$ weights.
Recall that for $1<p<\infty$, $w\in A_p$ if
\[ [w]_{A_p} = \sup_Q \left(\avgint_Q w(x)\,dx\right)
\left(\avgint_Q w(x)^{1-p'}\,dx\right)^{p-1} < \infty. \]
Let $u=w^q$ and $\sigma=w^{-p'}$
and define the function $s(\cdot)$ by 
$$s(p):=1+\frac{q}{p'}=q\Big(1-\frac{\al}{n}\Big)=p\Big(\frac{n-\al}{n-\al p}\Big).$$
Note that it follows at once from this that $s(p)'=s(q')$.  Then it is straightforward to show that the following are equivalent:
$w\in A_{p,q}$, $u\in A_{s(p)}$ and $\sigma \in A_{s(q')}$, and 
\begin{equation*} 
 [w]_{A_{p,q}} = [u]_{A_{s(p)}}^{1/q} =
[\sigma]_{A_{s(q')}}^{1/p'}.
\end{equation*}
Moreover, by a change of variables we can restate the weak and strong
type inequalities in terms of $u$ and $\sigma$:
\begin{gather}\label{eqn:weak} 
\|I_\al(f\sigma)\|_{L^{q,\infty}(u)}\lesssim \|f\|_{L^p(\sigma)} \\
\intertext{and}
\label{eqn:strong} 
\|I_\al(f\sigma)\|_{L^{q}(u)}\lesssim \|f\|_{L^p(\sigma)}.
\end{gather}
(This formulation has the advantage that it makes the connection
between the one and two weight inequalities more natural:  see below.)  
It was shown in~\cite{MR2652182} that
\begin{gather}
\label{eqn:shpweak}
\|I_\al(\,\cdot\,\sigma)\|_{L^p(\sigma)\ra L^{q,\infty}(u)}\lesssim
[u]_{A_{s(p)}}^{1-\frac{\al}{n}} \\
\intertext{and}
\label{eqn:shpstrong}
\|I_\al(\,\cdot\,\sigma)\|_{L^p(\sigma)\ra L^{q}(u)}\lesssim
[u]_{A_{s(p)}}^{1-\frac{\al}{n}}+[\sigma]_{A_{s(q')}}^{1-\frac{\al}{n}}\simeq
[u]_{A_{s(p)}}^{(1-\frac{\al}{n})\max(1,\frac{p'}{q})}. 
\end{gather}

Our
first result is an improvement of these inequalities.   It is again
motivated by the corresponding problem for Calder\'on-Zygmund
operators:
see~\cite{hytonen-lacey-IUMJ,hytonen-perez-analPDE,lacey-HJM}.  There,
a precise bound involving the $A_p$ constant and the smaller $A_\infty$ constant was given.  Recall that $w\in
A_\infty$ if
\[ [w]_{A_\infty} = \sup_Q \exp\left(\avgint_Q -\log(w(x))\,dx\right)
\left(\avgint_Q w(x)\,dx\right) < \infty. \]
We have that $w\in A_\infty$ if and only if $w\in A_p$ for some $p>1$,
and 
\[ [w]_{A_\infty} = \lim_{p\rightarrow \infty} [w]_{A_p}. \]
(This limit was proved by Sbordone and Wik~\cite{MR1291957}.)  There
are several equivalent definitions of the $A_\infty$ condition
(see~\cite{garcia-cuerva-rubiodefrancia85}).  One in particular has
been shown to be very useful in the study of sharp constant problems.
We say that a weight $w\in A_\infty'$ if
\[ [w]_{A_\infty'} =\sup_Q \frac{1}{w(Q)}\int_Q M(\chi_Qw)(x)\,dx <
\infty, \]
where $M$ is the Hardy-Littlewood maximal operator.  Independently,
Fujii~\cite{MR0481968} and Wilson~\cite{MR883661,wilson89} (also see~\cite{wilson07})
showed that $w\in A_\infty$ if and only if $w\in A_\infty'$.  P\'erez
and Hyt\"onen~\cite{hytonen-perez-analPDE} showed that
$[w]_{A_\infty'} \lesssim [w]_{A_\infty}$, and in fact 
$[w]_{A_\infty'}$ can be substantially smaller.   Using this
definition we can state our result.

\begin{theorem} \label{thm:mixedfrac-OLD} 
  Given $\alpha$, $0<\alpha<n$, and $p$, $1<p<n/\al$, define $q$ 
  by \eqref{eqn:sobolev}.  Let $w\in A_{p,q}$  and set $u=w^q$ and
  $\sigma=w^{-p'}$.  Then
\begin{gather*}\label{eqn:mixedweak}
\|I_\al(\,\cdot\,\sigma)\|_{L^p(\sigma)\ra L^{q,\infty}(u)}\lesssim
[u]_{A_{s(p)}}^{\frac{1}{q}}[u]_{A_\infty'}^{\frac{1}{p'}} \\
\intertext{and}
\label{eqn:mixedstrong}
\|I_\al(\,\cdot\,\sigma)\|_{L^p(\sigma)\ra L^{q}(u)}\lesssim
[u]_{A_{s(p)}}^{\frac{1}{q}}([u]_{A_\infty'}^{\frac{1}{p'}}
+[\sigma]_{A_\infty'}^{\frac{1}{q}}).
\end{gather*}
\end{theorem}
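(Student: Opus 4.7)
The approach is to adapt the principal-cubes/corona strategy that Hyt\"onen, Lacey and P\'erez developed for Calder\'on--Zygmund operators to the fractional setting. By a standard three-grids argument (as in~\cite{MR2652182}), $I_\al$ is pointwise dominated by finitely many dyadic fractional integrals $I_\al^\D f(x) = \sum_{Q\in\D}|Q|^{\al/n}\langle f\rangle_Q\chi_Q(x)$, so it suffices to prove both inequalities with $I_\al^\D$ in place of $I_\al$. For the strong-type bound, dualize:
\[
  \|I_\al^\D(f\sigma)\|_{L^q(u)} = \sup_{\|g\|_{L^{q'}(u)}=1}\sum_{Q\in\D}|Q|^{\al/n-1}\sigma(Q)u(Q)\langle f\rangle_Q^\sigma\langle g\rangle_Q^u.
\]
Using the Sobolev relation $1/p-1/q=\al/n$, a direct computation gives the pointwise $A_{p,q}$ identity
\[
  |Q|^{\al/n-1}\sigma(Q)u(Q) = \langle\sigma\rangle_Q^{1/p'}\langle u\rangle_Q^{1/q}\cdot\sigma(Q)^{1/p}u(Q)^{1/q'} \le [w]_{A_{p,q}}\sigma(Q)^{1/p}u(Q)^{1/q'},
\]
and since $[w]_{A_{p,q}}=[u]_{A_{s(p)}}^{1/q}$ the common prefactor $[u]_{A_{s(p)}}^{1/q}$ of both bounds is obtained at once.

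It remains to estimate $\sum_{Q\in\D}\sigma(Q)^{1/p}u(Q)^{1/q'}\langle f\rangle_Q^\sigma\langle g\rangle_Q^u$ by a combination of the $A_\infty'$ constants times $\|f\|_{L^p(\sigma)}\|g\|_{L^{q'}(u)}$. For this, I run two principal-cube decompositions simultaneously: $\mathcal F=\mathcal F(f,\sigma)$ and $\mathcal G=\mathcal G(g,u)$, built by the doubling stopping procedure applied to $\langle f\rangle^\sigma$ and $\langle g\rangle^u$ respectively. The associated halo sets $E_F$, $E_G$ are pairwise disjoint with $\sigma(E_F)\gtrsim\sigma(F)$ and $u(E_G)\gtrsim u(G)$, and on every dyadic cube one has $\langle f\rangle_Q^\sigma\le 2\langle f\rangle_{\pi_{\mathcal F}(Q)}^\sigma$ and $\langle g\rangle_Q^u\le 2\langle g\rangle_{\pi_{\mathcal G}(Q)}^u$. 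Substituting and regrouping by $(F,G)=(\pi_{\mathcal F}(Q),\pi_{\mathcal G}(Q))$ reduces matters to controlling a double sum whose innermost cube-sum is then split according to whether $G\subseteq F$ or $F\subseteq G$.

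The two halves of this split are symmetric and produce the two terms on the right-hand side of the strong bound: the $G\subseteq F$ piece telescopes through the nested $\mathcal G$-corona inside each $\mathcal F$-corona and extracts the Hyt\"onen--P\'erez Carleson factor $[u]_{A_\infty'}^{1/p'}$; the $F\subseteq G$ piece is analogous and yields $[\sigma]_{A_\infty'}^{1/q}$. A H\"older inequality across the two families of principal cubes, combined with the standard Carleson embeddings $\sum_F(\langle f\rangle_F^\sigma)^p\sigma(E_F)\lesssim\|f\|_{L^p(\sigma)}^p$ and $\sum_G(\langle g\rangle_G^u)^{q'}u(E_G)\lesssim\|g\|_{L^{q'}(u)}^{q'}$, then closes the strong-type estimate. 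The weak-type bound is a one-sided version of the same argument: linearizing $\{|I_\al^\D(f\sigma)|>t\}$ as a disjoint union of dyadic cubes and running only the $\mathcal F$-corona requires a single $A_\infty'$ factor, namely $[u]_{A_\infty'}^{1/p'}$.

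The main technical obstacle is verifying that the fractional scaling $|Q|^{\al/n-1}$ couples correctly with the two nested coronas so that exactly the exponents $1/p'$ and $1/q$ appear on the two $A_\infty'$ constants. In the non-fractional (Calder\'on--Zygmund) case $p=q$ and these exponents coincide, but for $\al>0$ the Sobolev relation forces them to differ, and this asymmetry is the algebraic origin of the sum $[u]_{A_\infty'}^{1/p'}+[\sigma]_{A_\infty'}^{1/q}$ in the strong-type bound.
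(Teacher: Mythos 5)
Your outline takes a genuinely different route from the paper.  You dualize directly and run a two-family (parallel) principal-cube decomposition in the spirit of Hyt\"onen--Lacey for Calder\'on--Zygmund operators.  The paper instead reduces to a sparse family (Propositions~\ref{dyadicbound} and~\ref{prop:sparsefrac}) and invokes the Lacey--Sawyer--Uriarte-Tuero characterization (Proposition~\ref{prop:lsut}), which simultaneously converts both the weak- and strong-type norms into testing conditions; the testing functional is then estimated in Theorem~\ref{thm:maintest} by a \emph{single}, multi-layered corona, freezing first the two-weight $A_{s(p)}$ level, then the fractional average, and using the exponential decay of Lemma~\ref{lemma:expdecyLeb}.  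The testing route has the advantage of localizing to one cube $R$ and needing only one corona at a time, and it delivers both the weak and strong bounds from the same estimate.  Your route is in principle viable, and your cube-by-cube $A_{p,q}$ identity correctly extracts the $[u]_{A_{s(p)}}^{1/q}$ prefactor, but it duplicates the corona work and must be carried out with considerably more care than the sketch indicates.

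There are concrete gaps.  First, you never pass to a sparse family: your bilinear sum runs over all $Q\in\D$, and the sum over $Q$ with fixed pair of principal cubes $(F,G)$ is effectively a localized testing quantity, which is not finite, let alone controlled, without first replacing $\D$ by a Lebesgue-sparse subfamily $\Sp$.  The factor $[u]_{A_\infty'}$ enters precisely as the Carleson constant of the sequence $c_Q=u(Q)$ over a Lebesgue-sparse family (compare inequality~\eqref{eqn:carleson} in the paper), and this is exactly the step your outline waves at (``extracts the Hyt\"onen--P\'erez Carleson factor'') without performing it; the $\sigma$- and $u$-sparsity of the stopping families $\mathcal F$, $\mathcal G$ by themselves give only absolute Carleson constants and cannot produce $[u]_{A_\infty'}$ or $[\sigma]_{A_\infty'}$.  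Second, the weak-type argument is not really present: ``linearizing $\{|I^\D_\al(f\sigma)|>t\}$ as a disjoint union of dyadic cubes'' does not give the $L^{q,\infty}(u)$ bound, and the usual device is either $L^{q',1}$-duality or, as in the paper, the dual testing condition from Proposition~\ref{prop:lsut}.  Finally, your concluding remark that in the Calder\'on--Zygmund case $p=q$ the exponents $1/p'$ and $1/q$ coincide is false: when $p=q$ they are $1/p'$ and $1/p$, equal only at $p=2$, and the two-term shape of the strong bound is already present for $\al=0$.
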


\begin{remark}
Recently, Lerner~\cite{lerner2012} introduced a different approach to
improving the sharp $A_p$ estimates for singular integrals using a
mixed $A_p$-$A_r$ condition, $1<r<\infty$.  He showed that this
condition is not readily comparable to the $A_p$-$A_\infty$ condition
we are using.  It is an open question whether the corresponding
conditions can be proved for Riesz potentials.
\end{remark}

We will actually prove Theorem~\ref{thm:mixedfrac-OLD} as a special
case of a two weight result.   Note that while we have assumed in
inequalities~\eqref{eqn:weak} and~\eqref{eqn:strong} that $u$ and
$\sigma$ are linked via
the weight $w\in A_{p,q}$, we do not {\em a priori} have to assume
this.  We cannot completely decouple the weights $u$ and $\sigma$ but
we can weaken their connection.  In this context it is natural to
generalize the $A_p$ condition to hold for a pair of weights:  we say
$(u,\sigma)\in A_p$ if
$$[u,\sigma]_{A_p}=\sup_Q \left(\avgint_Q u(x)\,dx\right)
\left(\avgint_Q \sigma(x)\,dx\right)^{p-1}<\infty.$$
It is well known that this condition is necessary for many two weight
inequalities, but not sufficient.   For example, $(u,\sigma)\in A_p$
is necessary for~\eqref{eqn:weak}.  However, if we assume that $u$
and/or $\sigma$ are in $A_\infty$, then it is sufficient, and we can
generalize Theorem~\ref{thm:mixedfrac-OLD} as follows.  

\begin{theorem} \label{thm:mixedfrac} 
  Given $\alpha$, $0<\alpha<n$, and $p$, $1<p<n/\al$, define $q$ 
  by \eqref{eqn:sobolev}.  Suppose $(u,\sigma)$ is a pair of weights
  with $[u,\sigma]_{A_{s(p)}}<\infty$.  If $u\in A_\infty$,  then
\begin{gather*}\label{eqn:mixedweak}
\|I_\al(\,\cdot\,\sigma)\|_{L^p(\sigma)\ra L^{q,\infty}(u)}\lesssim
[u,\sigma]_{A_{s(p)}}^{\frac{1}{q}}[u]_{A_\infty'}^{\frac{1}{p'}}. \\
\intertext{Moreover, if both $u$ and $\sigma$ belong to $A_\infty$, then}
\label{eqn:mixedstrong}
\|I_\al(\,\cdot\,\sigma)\|_{L^p(\sigma)\ra L^{q}(u)}\lesssim
[u,\sigma]_{A_{s(p)}}^{\frac{1}{q}}([u]_{A_\infty'}^{\frac{1}{p'}}
+[\sigma]_{A_\infty'}^{\frac{1}{q}}).
\end{gather*}
\end{theorem}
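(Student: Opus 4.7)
The plan is to prove the strong-type inequality by a corona decomposition adapted to fractional averages; the weak-type estimate follows from the same machinery applied to a Calder\'on--Zygmund decomposition of the level set $\{|I_\al(f\sigma)|>\lambda\}$ and requires only a single-sided ($f$-based) corona, which is why only $u\in A_\infty$ is invoked there. After the standard shifted-grid reduction in the spirit of \cite{hytonen-perez-analPDE}, one replaces $I_\al$ by dyadic fractional sums, and by duality the problem becomes that of bounding the bilinear form
$$\Lambda_{\mathcal S}(f,g)=\sum_{Q\in\mathcal S}|Q|^{\al/n-1}\sigma(Q)u(Q)\,\langle f\rangle^\sigma_Q\langle g\rangle^u_Q,$$
uniformly over sparse families $\mathcal S$ in a dyadic grid and pairs $f\in L^p(\sigma),\,g\in L^{q'}(u)$, where $\langle h\rangle^\mu_Q=\mu(Q)^{-1}\int_Q h\,d\mu$. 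The key algebraic ingredient is the pointwise estimate
$$|Q|^{\al/n-1}u(Q)^{1/q}\sigma(Q)^{1/p'}\leq[u,\sigma]_{A_{s(p)}}^{1/q},$$
immediate from $s(p)-1=q/p'$ and the Sobolev identity $1/q+1/p'=1-\al/n$, which absorbs the fractional scaling into the two-weight $A_{s(p)}$-constant raised to the power $1/q$.

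The main step is the construction of a two-sided corona decomposition on $\mathcal S$, with simultaneous stopping on $\langle f\rangle^\sigma_Q$ and $\langle g\rangle^u_Q$: a cube $F$ is declared principal if it is maximal with either $\langle f\rangle^\sigma_F>2\langle f\rangle^\sigma_{F^\ast}$ or $\langle g\rangle^u_F>2\langle g\rangle^u_{F^\ast}$ for its nearest principal ancestor $F^\ast$. Standard stopping-time analysis then splits the principal cubes into two subfamilies $\mathcal F_\sigma\cup\mathcal F_u$, the former $\sigma$-sparse and the latter $u$-sparse, and on each corona tree $\mathcal T(F)=\{Q\in\mathcal S:\pi Q=F\}$ one has $\langle f\rangle^\sigma_Q\leq 2\langle f\rangle^\sigma_F$ and $\langle g\rangle^u_Q\leq 2\langle g\rangle^u_F$. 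Grouping, applying the pointwise $A_{s(p)}$ bound, and splitting $\sigma(Q)u(Q)=[\sigma(Q)^{1/p'}u(Q)^{1/q}]\cdot\sigma(Q)^{1/p}u(Q)^{1/q'}$ yields
$$\Lambda_{\mathcal S}(f,g)\lesssim[u,\sigma]_{A_{s(p)}}^{1/q}\sum_F\langle f\rangle^\sigma_F\langle g\rangle^u_F\sum_{Q\in\mathcal T(F)}\sigma(Q)^{1/p}u(Q)^{1/q'}.$$

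The inner sum, a sum over a Lebesgue-sparse subfamily of $\mathcal S\cap F$, is controlled via the Hyt\"onen--P\'erez packing lemma: the defining inequality $\int_F M(w\chi_F)\leq[w]_{A_\infty'}w(F)$ together with the identity $|Q|\leq 2|E_Q|$ for the disjoint sparse children $E_Q\subset Q$ delivers $\sum_{Q\in\mathcal T(F)}\sigma(Q)\lesssim[\sigma]_{A_\infty'}\sigma(F)$ and $\sum_{Q\in\mathcal T(F)}u(Q)\lesssim[u]_{A_\infty'}u(F)$, after which H\"older's inequality produces a bound involving $\sigma(F)^{1/p}u(F)^{1/q'}$ with the two $A_\infty'$ constants. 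A final H\"older in the outer $F$-sum, applied separately on $\mathcal F_\sigma$ (using the sparse Carleson embedding $\sum_F(\langle f\rangle^\sigma_F)^p\sigma(F)\lesssim\|f\|_{L^p(\sigma)}^p$) and on $\mathcal F_u$ (using the dual $u$-Carleson embedding for $g$), yields the two contributions whose sum is the symmetric bound $[u,\sigma]_{A_{s(p)}}^{1/q}\bigl([u]_{A_\infty'}^{1/p'}+[\sigma]_{A_\infty'}^{1/q}\bigr)\|f\|_{L^p(\sigma)}\|g\|_{L^{q'}(u)}$.

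The principal technical obstacle is the careful bookkeeping of H\"older exponents in the inner and outer sums: since the Sobolev scaling $1/p-1/q=\al/n\neq 0$ renders the natural pair $(p,q')$ non-conjugate, the two-weight $A_{s(p)}$ absorption must be inserted at exactly the right interface between the corona-tree and principal-cube H\"older splittings so that the $A_\infty'$ constants emerge with sharp powers $1/p'$ and $1/q$ rather than the trivial exponent $1$, and so that the two halves $\mathcal F_\sigma$ and $\mathcal F_u$ of the principal family give the symmetric \emph{sum} $[u]_{A_\infty'}^{1/p'}+[\sigma]_{A_\infty'}^{1/q}$ instead of a product.
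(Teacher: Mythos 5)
Your proposal replaces the paper's proof strategy with a different one: where the paper proves a testing-condition bound (Theorem~\ref{thm:maintest}) and invokes the Lacey--Sawyer--Uriarte-Tuero characterization (Proposition~\ref{prop:lsut}), you dualize directly and attack the bilinear form $\Lambda_{\mathcal S}(f,g)$ with a two-sided corona decomposition stopping on the $\sigma$-averages of $f$ and the $u$-averages of $g$, in the manner of the Hyt\"onen--Lacey argument for Calder\'on--Zygmund operators. By contrast, the paper's corona decomposition in the proof of Theorem~\ref{thm:maintest} is performed entirely on the weight: it freezes the two-weight $A_{s(p)}$ ratio into slabs $\Q^a$, introduces stopping cubes $\Ca^a$ determined by fractional averages of $u$ alone, and then controls the overlap with the sparse John--Nirenberg decay estimate of Lemma~\ref{lemma:expdecyLeb} (which you never need because your corona already tames $\langle f\rangle^\sigma_Q$, $\langle g\rangle^u_Q$ inside each tree). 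Your key algebraic step, absorbing $|Q|^{\al/n-1}u(Q)^{1/q}\sigma(Q)^{1/p'}$ into $[u,\sigma]_{A_{s(p)}}^{1/q}$ via the Sobolev identity, is correct and matches the role $\eqref{eqn:2aest}$ plays in the paper.

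There is, however, a genuine gap in your treatment of the weak-type estimate, which is supposed to hold under $u\in A_\infty$ \emph{only}. After the $A_{s(p)}$ absorption, the inner sum over a corona tree $\mathcal T(F)$ is $\sum_{Q\in\mathcal T(F)}\sigma(Q)^{1/p}u(Q)^{1/q'}\langle g\rangle^u_Q$, and even with $g=\chi_E$ (so $\langle g\rangle^u_Q\le 1$) the packing bound you then apply reads $\sum_Q\sigma(Q)\lesssim[\sigma]_{A_\infty'}\sigma(F)$; this requires $\sigma\in A_\infty$. A ``single-sided ($f$-based) corona'' does not remove the dependence on $[\sigma]_{A_\infty'}$ in the inner sum, because the corona stopping on $f$ controls only $\langle f\rangle^\sigma_Q$ along a tree, not $\sum_Q\sigma(Q)$. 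The reason the paper gets the weak-type bound from $u\in A_\infty$ alone is structural: the weak-type norm is characterized (Proposition~\ref{prop:lsut}) by a testing constant in which the operator is applied to $\chi_R u$, so $f$ never appears and the corona stopping cubes $\Ca^a$ are built from fractional averages of $u$ only; the sole packing used is the $u$-Carleson inequality~\eqref{eqn:carleson}, yielding exactly one factor $[u]_{A_\infty'}^{1/p'}$. To salvage your route for the weak-type half, you would need to argue instead along the testing condition, or find a way to make the bilinear-form corona avoid $\sigma$-packing, which it does not do as written. (For the strong-type bound, where both weights are assumed $A_\infty$, your approach is viable in spirit; the remaining concern there is only the H\"older bookkeeping across the superlinear index $1/p+1/q'=1+\al/n$, which you correctly flag as delicate but do not resolve, and whether the $\mathcal F_\sigma$/$\mathcal F_u$ splitting actually produces the claimed sum $[u]_{A_\infty'}^{1/p'}+[\sigma]_{A_\infty'}^{1/q}$ rather than a product of the two constants.)
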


\begin{remark}
To see that Theorem \ref{thm:mixedfrac} does indeed generalize
Theorem~\ref{thm:mixedfrac-OLD}, set $u=w^q$ and $\sigma=w^{-p'}$.
Then
$[u,\sigma]_{A_{s(p)}}^{\frac1q}=[\sigma,u]_{A_{s(q')}}^{\frac{1}{p'}}$
and
$\frac{1}{q}+\frac{1}{p'}=1-\frac{\al}{n}.$
\end{remark}

\smallskip

A non-quantitative version of this result was implicit in P\'erez~\cite{MR1291534}.
In the study of two weight norm inequalities for singular integrals,
it has long been part of the folklore that assuming $(u,\sigma)\in A_p$  with the
additional hypothesis that $u$ and $\sigma$ are in
$A_\infty$ is a sufficient condition.  This was implicit in
Neugebauer~\cite{MR687633} and was the motivation for results by
Fujii~\cite{fujii91}, Leckband~\cite{leckband85}, and
Rakotondratsimba~\cite{rakotondratsimba98b}.   The sharp analog of
Theorem~\ref{thm:mixedfrac} for singular integrals is due to Hyt\"onen
and Lacey~\cite{hytonen-lacey-IUMJ}.

If we drop the assumption that $u$ and $\sigma$ are $A_\infty$
weights, we need to assume a stronger condition than two weight $A_p$
for norm inequalities to hold.  However, when working in this
generality we no longer have to assume that $p$ and $q$ satisfy the
Sobolev relationship~\eqref{eqn:sobolev}.  Instead, we only assume
that $p\leq q$.  (The case $q<p$ is much more difficult; see, for
instance, Verbitsky~\cite{MR1134691}.)  In this case the weights for
the weak and strong type inequalities were characterized by
Sawyer~\cite{MR719674,MR930072}.

\begin{theorem} \label{thm:sawyer}
Given $\alpha$, $0<\alpha<n$, and $p,\,q$, $1<p\leq q < \infty$, the
weak type inequality~\eqref{eqn:weak} holds if and only if for every
cube $Q$,
\begin{equation} \label{eqn:sawyer2}
  \left(\int_Q I_\alpha(\chi_Qu)(x)^{p'} \sigma(x)\,dx\right)^{1/p'} 
\lesssim \left(\int_Q u(x)\,dx\right)^{1/q'}.
\end{equation}
The strong type inequality~\eqref{eqn:strong} holds if and only if for
every cube $Q$, inequality~\eqref{eqn:sawyer2} holds and 
\begin{equation} \label{eqn:sawyer1}
  \left(\int_Q I_\alpha(\chi_Q\sigma)(x)^q u(x)\,dx\right)^{1/q} 
\lesssim \left(\int_Q \sigma(x)\,dx\right)^{1/p}.
\end{equation}
\end{theorem}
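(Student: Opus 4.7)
The plan is to prove necessity and sufficiency separately, with necessity following from testing $I_\al$ on characteristic functions and sufficiency from a corona decomposition applied to a dyadic model of $I_\al$.

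Necessity is routine. For the strong type, substituting $f=\chi_Q$ in \eqref{eqn:strong} and restricting the left-hand integral to $Q$ yields \eqref{eqn:sawyer1}; applying the bilinear identity $\int I_\al(f\sigma)\,g u=\int f\sigma\,I_\al(g u)$ converts \eqref{eqn:strong} into its dual $\|I_\al(gu)\|_{L^{p'}(\sigma)}\lesssim\|g\|_{L^{q'}(u)}$, and testing that with $g=\chi_Q$ gives \eqref{eqn:sawyer2}. For the weak type, \eqref{eqn:sawyer2} follows by pairing \eqref{eqn:weak} against $\chi_Q\in L^{q',1}(u)$, whose norm is $q'u(Q)^{1/q'}$, and then taking the supremum over unit vectors $f\in L^p(\sigma)$ supported in $Q$.

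For sufficiency I pass to a dyadic model. Using a finite collection of shifted dyadic grids $\{\mathcal{D}^t\}$ with the property that every pair $x,y\in\mathbb R^n$ lies in some $Q\in\cup_t\mathcal{D}^t$ with $\ell(Q)\simeq|x-y|$, one obtains the pointwise bound $I_\al h\lesssim\sum_t I_\al^{\mathcal{D}^t}h$, where
\[
I_\al^{\mathcal{D}}h(x):=\sum_{Q\in\mathcal{D}}\ell(Q)^\al\langle h\rangle_Q\chi_Q(x).
\]
Comparing each dyadic cube with a concentric ambient cube of comparable size shows that the testing hypotheses transfer to each $I_\al^{\mathcal{D}^t}$ at the cost of a dimensional constant, so it suffices to prove the two-weight bounds for a single $I_\al^\mathcal{D}$.

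The core of the argument is a corona decomposition. Dualizing the strong type gives
\[
\|I_\al^{\mathcal{D}}(f\sigma)\|_{L^q(u)}=\sup_{\|g\|_{L^{q'}(u)}=1}\sum_{Q\in\mathcal{D}}\ell(Q)^\al\,\langle f\rangle_Q^\sigma\,\langle g\rangle_Q^u\,\frac{\sigma(Q)\,u(Q)}{|Q|},
\]
where $\langle\cdot\rangle_Q^\mu$ denotes the $\mu$-average over $Q$. I build principal cubes $\mathcal{F}$ for $f$ relative to $\sigma$ by selecting maximal subcubes $F'\subsetneq F\in\mathcal{F}$ where $\langle f\rangle_{F'}^\sigma$ first exceeds $2\langle f\rangle_F^\sigma$, and I build the analogous family $\mathcal{G}$ for $g$ relative to $u$. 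Splitting the sum by joint coronas: on cubes $Q$ with $\mathcal{F}$-parent $F$ and $\mathcal{G}$-parent $G$ satisfying $F\subset G$, the corona contribution collapses via \eqref{eqn:sawyer1} applied to $F$; on the complementary piece $G\subset F$ one applies \eqref{eqn:sawyer2} to $G$. The outer sums are then controlled by the Carleson packing estimates $\sum_{F'\in\mathrm{ch}(F)}\sigma(F')\le\tfrac12\sigma(F)$ and $\sum_{G'\in\mathrm{ch}(G)}u(G')\le\tfrac12u(G)$, combined with H\"older in the $L^p(\sigma)$ and $L^{q'}(u)$ duality. The weak-type inequality follows from the same scheme with $\mathcal{G}$ adapted instead to $\chi_E$ for a level set $E$ obtained by linearizing $L^{q,\infty}(u)$, and uses only \eqref{eqn:sawyer2}. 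The main obstacle is the interleaving of the two stopping families living at unrelated scales; the $F\subset G$ versus $G\subset F$ dichotomy is exactly what isolates a piece to which the testing hypothesis applies cleanly.
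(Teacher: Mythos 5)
The paper does not actually prove Theorem~\ref{thm:sawyer}: it is stated as a known result with citations to Sawyer's papers, and the companion dyadic testing theorem of Lacey, Sawyer, and Uriarte-Tuero is quoted separately as Proposition~\ref{prop:lsut}. Your proposal is therefore a genuinely different route: you reprove Sawyer's theorem by reducing to the dyadic model and running a parallel corona decomposition, which is the Nazarov--Treil--Volberg/Lacey--Sawyer--Uriarte-Tuero scheme rather than Sawyer's original good-$\lambda$ argument. Your necessity argument (duality plus indicator testing, with the $L^{q,\infty}$--$L^{q',1}$ pairing for the weak type) is standard and correct. For the strong-type sufficiency, the bi-corona with stopping families $\mathcal{F}$ for $f$ (relative to $\sigma$) and $\mathcal{G}$ for the dualizing function $g$ (relative to $u$), split according to which stopping parent contains the other and then closed by the two testing conditions and Carleson packing, is the right scheme. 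One small simplification: you don't need to compare dyadic cubes with concentric ambient cubes to ``transfer'' the testing hypotheses, since $I_\al^{\mathcal D}h\lesssim I_\al h$ pointwise for $h\geq 0$ (Proposition~\ref{dyadicbound}) already dominates every dyadic testing constant by the continuous one over the same cube.

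The thin spot is your one-sentence treatment of the weak-type sufficiency. Running ``the same scheme with $\mathcal{G}$ adapted to $\chi_E$'' does not close as written: in the bi-corona the $F\subset G$ piece is controlled by freezing $\langle f\rangle^\sigma$ and invoking~\eqref{eqn:sawyer1} on $F$, and replacing $g$ by a characteristic function does not remove the need for that bound, while for the weak type only~\eqref{eqn:sawyer2} is available. Passing to the dual formulation $\|I_\al(\chi_E u)\|_{L^{p'}(\sigma)}\lesssim u(E)^{1/q'}$ and using one stopping family adapted to $\chi_E$ relative to $u$ produces, after the cube testing, a sum of the form $\sum_G u(E\cap G)\,u(G)^{-1/q}$, which is not obviously $\lesssim u(E)^{1/q'}$ from the Carleson packing alone. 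The weak-type direction for positive operators in the range $p\le q$ requires its own stopping-time argument (this is a substantive separate step in Lacey--Sawyer--Uriarte-Tuero), and your outline should acknowledge that rather than fold it into the strong-type corona.
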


While  necessary and sufficient, the so-called testing conditions in
Theorem~\ref{thm:sawyer} have the drawback that they involve the Riesz
potential itself.  Another approach is to find sharp sufficient conditions that
resemble the $A_{p,q}$ condition of Muckenhoupt and Wheeden.   This
approach was introduced by P\'erez~\cite{MR1291534,MR1327936} and
involves replacing the local $L^p$ norm with a larger norm in the
scale of Orlicz spaces.  

To state these results we need to make some preliminary definitions. A
Young function is a function $\Phi : [0,\infty) \rightarrow
[0,\infty)$ that is continuous, convex and strictly increasing,
$\Phi(0)=0$ and $\Phi(t)/t\rightarrow \infty$ as $t\rightarrow
\infty$. Given a cube $Q$ we define the localized Luxemburg norm by
\[  \|f\|_{\Phi,Q} = 
\inf \left\{ \lambda > 0 : \avgint_Q \Phi\left(\frac{|f(x)|}{\lambda}\right)dx \leq 1 \right\}.  \]
When $\Phi(t)=t^p$, $1<p<\infty$, this becomes the $L^p$ norm and we write
$$
\|f\|_{p,Q}=\left(\avgint_Q |f(x)|^p dx\right)^{1/p}.
$$
The associate function of $\Phi$ is the Young function
\[ \bar{\Phi}(t) = \sup_{s>0}\{ st - \Phi(s)\}. \]
Note that $\bar{\bar{\Phi}}=\Phi$.
A Young function $\Phi$ satisfies the $B_p$ condition if for some $c>0$,
\[ \int_c^\infty \frac{\Phi(t)}{t^p} \frac{dt}{t} < \infty. \]
Important examples of such functions are $\Phi(t)=t^{sp}$, $s>1$, whose
associate function is $\bar{A}(t)=t^{(sp)'}$, and
$\Phi(t)=t^{p}\log(e+t)^{-1-\epsilon}$, $\epsilon>0$, which have
associate functions $\bar{\Phi}(t)\approx
t^{p'}\log(e+t)^{p'-1+\delta}$, $\delta>0$.  We refer to these
associate functions as power bumps and log bumps.

P\'erez proved the following strong type inequality.

\begin{theorem} \label{thm:perez-strong}
Given $\alpha$, $0<\alpha<n$, and $p,\,q$, $1<p\leq q < \infty$, the
strong type inequality~\eqref{eqn:strong} holds for every pair of
weights $(u,\sigma)$ that satisfies
\begin{equation} \label{eqn:perez1}
 \sup_Q |Q|^{\frac{\alpha}{n}+\frac{1}{q}-\frac{1}{p}}
\|u^{\frac{1}{q}}\|_{\Phi,Q} \|\sigma^{\frac{1}{p'}}\|_{\Psi,Q} <
\infty,
\end{equation}
where $\Phi,\,\Psi$ are Young functions such that $\bar{\Phi}\in
B_{q'}$  and $\bar{\Psi} \in B_p$. 
\end{theorem}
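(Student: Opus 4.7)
The plan is to realize the abstract's promise: use a corona decomposition adapted to fractional averages, combined with dyadic reduction, duality, and the generalized H\"older inequality in Orlicz spaces.

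\textbf{Step 1 (Dyadic reduction and duality).} By the standard trick of averaging over a finite collection of shifted dyadic grids, it suffices to prove the analogous estimate for a dyadic positive model
\[
I_\al^{\D} h(x) = \sum_{Q \in \D}|Q|^{\al/n}\Big(\avgint_Q h\Big)\chi_Q(x).
\]
Dualizing the strong-type bound against $L^{q'}(u)$, matters reduce to showing
\[
S := \sum_{Q\in\D}|Q|^{\al/n}\Big(\avgint_Q f\sigma\Big)\int_Q g u\,dx \;\lesssim\; \|f\|_{L^p(\sigma)}\|g\|_{L^{q'}(u)}
\]
for nonnegative, bounded, compactly supported $f$ and $g$.

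\textbf{Step 2 (Corona decomposition).} Construct a sparse family $\mathcal{S}$ of principal cubes via a joint stopping-time argument on the $\sigma$-averages of $f$ and the $u$-averages of $g$: given $F \in \mathcal{S}$, its $\mathcal{S}$-children are the maximal $Q \subsetneq F$ on which either $\sigma(Q)^{-1}\int_Q f\,d\sigma$ or $u(Q)^{-1}\int_Q g\,du$ exceeds twice its value over $F$. A Carleson packing argument shows $\mathcal{S}$ is sparse, and for any $Q \in \D$ one may replace the relevant averages by those on the minimal $\mathcal{S}$-ancestor. Collecting terms gives
\[
S \;\lesssim\; \sum_{E\in\mathcal{S}} |E|^{1+\al/n}\Big(\avgint_E f\sigma\Big)\Big(\avgint_E gu\Big).
\]

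\textbf{Step 3 (Orlicz-H\"older and the bump condition).} On each $E \in \mathcal{S}$, apply the generalized H\"older inequality,
\[
\avgint_E f\sigma \le 2\|f\sigma^{1/p}\|_{\bar\Psi,E}\|\sigma^{1/p'}\|_{\Psi,E}, \qquad \avgint_E gu \le 2\|gu^{1/q'}\|_{\bar\Phi,E}\|u^{1/q}\|_{\Phi,E}.
\]
The hypothesis~\eqref{eqn:perez1} absorbs $\|u^{1/q}\|_{\Phi,E}\|\sigma^{1/p'}\|_{\Psi,E}$ together with the scaling factor $|E|^{\al/n+1/q-1/p}$, leaving
\[
S \;\lesssim\; \sum_{E\in\mathcal{S}}|E|^{1+1/p-1/q}\,\|f\sigma^{1/p}\|_{\bar\Psi,E}\,\|gu^{1/q'}\|_{\bar\Phi,E}.
\]

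\textbf{Step 4 (Summation via Orlicz maximal operators).} Using sparsity (the disjoint major pieces $E_0 \subset E$ with $|E_0|\gtrsim|E|$) and the pointwise bound $\|h\|_{\bar\Psi,E} \le M_{\bar\Psi}h(x)$ valid for $x \in E$, dominate the sparse sum by a single integral of the form
\[
S \;\lesssim\; \int_{\R^n} M^{n(1/p-1/q)}_{\bar\Psi}(f\sigma^{1/p})(x)\, M_{\bar\Phi}(gu^{1/q'})(x)\,dx,
\]
where $M^{\beta}_{\bar\Psi}$ denotes the Orlicz fractional maximal operator at exponent $\beta$. Conclude by H\"older's inequality with exponents $q$ and $q'$: the first factor is controlled by $\|f\|_{L^p(\sigma)}$ via the Orlicz fractional maximal theorem (valid since $\bar\Psi\in B_p$), while the second is bounded by $\|g\|_{L^{q'}(u)}$ via the Orlicz maximal theorem (valid since $\bar\Phi\in B_{q'}$).

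\textbf{Main obstacle.} The principal difficulty is arranging the corona decomposition in this two-measure, fractional setting so that the $\sigma$-averages of $f$ and the $u$-averages of $g$ are controlled simultaneously, while the geometric factor $|Q|^{\al/n}$ gets absorbed cleanly into the bump hypothesis. A secondary but nontrivial point is the passage from the sparse sum to the fractional Orlicz maximal bound when $p<q$: unlike the singular-integral case $p=q$, one must verify the Orlicz fractional maximal theorem and check that the $B_p$ and $B_{q'}$ conditions are exactly what is needed to close the last H\"older estimate.
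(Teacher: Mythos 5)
The paper does not prove Theorem~\ref{thm:perez-strong}; it is stated as a citation to P\'erez~\cite{MR1291534}, and the paper's own machinery (the corona decomposition in Section~\ref{proofs}) is deployed to prove the strictly harder \emph{separated} bump Theorems~\ref{thm:mainstrong} and~\ref{thm:strongloglog}. So there is no in-paper proof to match against, and your sketch must be assessed on its own terms. Your overall strategy --- dyadic/sparse reduction, dualize, Orlicz--H\"older on each sparse cube, absorb the bump constant, then pass to Orlicz maximal operators --- is the standard modern ``sparse'' proof of P\'erez's theorem, and Steps~1, 3, and~4 are sound. In Step~4 you could avoid invoking a fractional Orlicz maximal theorem by writing $|E|^{1+1/p-1/q}=|E|^{1/p}\cdot|E|^{1/q'}$, applying H\"older in $\ell^q\times\ell^{q'}$ to the sparse sum, and using $\ell^{p/p}\hookrightarrow\ell^{q/p}$ (which is where $p\le q$ enters) before Lemma~\ref{lemma:perez-max}; this reduces everything to the ordinary $B_p$/$B_{q'}$ bounds of Lemma~\ref{lemma:perez-max} rather than a fractional variant. (Your fractional claim is true, but its proof reduces to the same trick, so it is not a genuine shortcut.)

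The genuine gap is Step~2. Stopping jointly on the $\sigma$-averages of $f$ and the $u$-averages of $g$ produces (at best) a family that is $\sigma$-Carleson and $u$-Carleson; it is \emph{not} Lebesgue sparse in the sense of~\eqref{eqn:sparse} without assuming $A_\infty$-type control on $u$ and $\sigma$, which is unavailable here. Moreover, that stopping controls $\sigma(Q)^{-1}\int_Q f\,d\sigma$ and $u(Q)^{-1}\int_Q g\,du$, whereas the sum you need to dominate contains the Lebesgue averages $\avgint_Q f\sigma$ and $\avgint_Q g u$, and the density ratios $\sigma(Q)/|Q|$, $u(Q)/|Q|$ that relate the two are not controlled by your stopping. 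So ``collecting terms gives $S\lesssim\sum_E|E|^{1+\al/n}(\avgint_E f\sigma)(\avgint_E gu)$'' does not follow as written. The correct reduction is exactly the paper's Proposition~\ref{prop:sparsefrac}: stop on the \emph{Lebesgue} averages of $f\sigma$ (treating $f\sigma$ as a single input), which produces a Lebesgue-sparse family and the pointwise bound $I_\al^\D(f\sigma)\lesssim I_\al^\Sp(f\sigma)$; then $S\le\langle I_\al^\Sp(f\sigma),gu\rangle=\sum_{Q\in\Sp}|Q|^{1+\al/n}(\avgint_Q f\sigma)(\avgint_Q gu)$ holds by positivity and your Steps~3--4 close the argument. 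Replace Step~2 with this and the proof is complete.
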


The corresponding two weight result for singular integrals (with $p=q$
and $\alpha=0$) was a long-standing conjecture motivated by
Theorem~\ref{thm:perez-strong}.  It was recently proved by
Lerner~\cite{Lern2012}.  For a detailed history of this problem,
see~\cite{dcu-martell-perez,MR2797562} and the references they
contain.

Much less is known about two weight, weak type inequalities for the
Riesz potential.   It has
long been known that for singular integrals, a sufficient condition
for the weak $(p,p)$ inequality is that the weights satisfy
\[ \sup_Q \|u^{\frac{1}{p}}\|_{\Phi,Q}\|\sigma^{\frac{1}{p'}}\|_{p',Q}
< \infty, \]
where $\Phi$ is the log bump $\Phi(t)=t^p\log(e+t)^{p-1+\delta}$
(see~\cite{MR1713140}).  It is conjectured that it
suffices to take $\Phi\in B_{p'}$ (see~\cite{MR2797562}.)  Moreover,
it was conjectured that the corresponding result holds for Riesz
potentials.

\begin{conjecture} \label{conj:weaktype} 
Given $\alpha$, $0<\al<n$, and $p,\,q$, $1< p\leq q<\infty$,  then the
weak type inequality~\eqref{eqn:weak} holds for every pair of weights
$(u,\sigma)$ that satisfies
\[ \sup_Q |Q|^{\frac{\alpha}{n}+\frac{1}{q}-\frac{1}{p}}
\|u^{\frac{1}{q}}\|_{\Phi,Q} \|\sigma^{\frac{1}{p'}}\|_{p',Q} <
\infty, \]
where $\Phi$ is a  Young function such that $\bar{\Phi}\in
B_{q'}$.
\end{conjecture}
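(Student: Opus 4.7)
The plan is to prove Conjecture~\ref{conj:weaktype} by a Calder\'on--Zygmund corona decomposition adapted to fractional averages, in the spirit of the arguments underlying Theorems~\ref{thm:mixedfrac-OLD} and~\ref{thm:mixedfrac}, combined with an Orlicz Carleson embedding that uses the $B_{q'}$ hypothesis on $\bar\Phi$. After the standard $3^n$ shifted dyadic grid reduction, it suffices to prove the weak-type estimate for a dyadic Riesz potential $I_\alpha^{\mathcal{D}} f = \sum_{Q \in \mathcal{D}} |Q|^{\alpha/n}\langle f\rangle_Q \chi_Q$; normalize $\|f\|_{L^p(\sigma)} = 1$ and set $A_Q := |Q|^{\alpha/n - 1}\int_Q f\sigma$, so that $I_\alpha^{\mathcal{D}}(f\sigma) = \sum_Q A_Q \chi_Q$.

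The core pointwise estimate comes by combining H\"older's inequality
\[
\int_Q f\sigma \leq |Q|^{1/p'}\Big(\int_Q f^p\sigma\Big)^{1/p}\|\sigma^{1/p'}\|_{p',Q}
\]
with the comparison $u(Q)^{1/q} \leq |Q|^{1/q}\|u^{1/q}\|_{\Phi,Q}$, valid because $\bar\Phi \in B_{q'}$ forces $\Phi(t) \gtrsim t^q$. The $|Q|$ exponents then collapse to $|Q|^{\alpha/n + 1/q - 1/p}$ and are absorbed by $K$ via the bump hypothesis, yielding
\[
A_Q \cdot u(Q)^{1/q} \leq C\,K \Big(\int_Q f^p\sigma\Big)^{1/p} \qquad \text{for every dyadic } Q.
\]
If the level set $\{I_\alpha^{\mathcal{D}}(f\sigma) > \lambda\}$ were contained in the union of a \emph{disjoint} family $\{Q_j\}$ with $A_{Q_j} \gtrsim \lambda$, the weak-type bound would follow immediately: one would have $\lambda^q u(\bigcup_j Q_j) \leq C K^q \sum_j (\int_{Q_j} f^p\sigma)^{q/p} \leq C K^q \big(\sum_j \int_{Q_j} f^p\sigma\big)^{q/p} \leq C K^q$, where the middle step uses $q/p \geq 1$ in the form $\sum_j a_j^{q/p} \leq (\sum_j a_j)^{q/p}$.

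The obstacle, and the step where the $B_{q'}$ hypothesis must be consumed, is that the level set of $I_\alpha^{\mathcal{D}}$ (unlike that of the fractional maximal operator $M_\alpha^{\mathcal{D}}$) is not controlled a priori by such a disjoint family: the sum $\sum_{Q \ni x} A_Q$ can exceed $\lambda$ while every individual $A_Q$ is tiny. To handle this I would build a corona family $\mathcal{F}$ of principal cubes by iterated stopping at descendants where either the $\sigma$-average $\langle f\rangle_\cdot^\sigma$ or the fractional average $A_\cdot$ doubles relative to the $\mathcal{F}$-parent; on each tree $\mathcal{T}(F)$ of cubes with nearest $\mathcal{F}$-ancestor $F$, both quantities stay within a factor of two of their root values. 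Decompose $I_\alpha^{\mathcal{D}}(f\sigma) = \sum_{F \in \mathcal{F}} g_F$ into tree-local fractional sums $g_F$ supported on $F$, dualize the $L^{q,\infty}(u)$ quasi-norm against $L^{q',1}(u)$, and reduce to an Orlicz Carleson embedding of the form
\[
\sum_{F \in \mathcal{F}}\|h\|_{\bar\Phi,F}^{q'}\, u(E_F) \lesssim \|h\|_{L^{q'}(u)}^{q'}
\]
for $\sigma$-sparse $\mathcal{F}$, which holds precisely because $\bar\Phi \in B_{q'}$. Establishing this embedding in the fractional setting and reconciling it with the corona structure and the pointwise estimate above is, in my view, the technical heart of the proof; it is the Riesz-potential analogue of the Orlicz Carleson embedding used in~\cite{Lern2012} for Calder\'on--Zygmund operators.
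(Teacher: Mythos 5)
You have set out to prove a statement that the paper records as an \emph{open conjecture}, not a theorem: the authors do \emph{not} prove Conjecture~\ref{conj:weaktype}. What they prove (Theorem~\ref{thm:mainweak}, via Theorem~\ref{thm:twoweighttesting}) is a partial result restricted to (i) the specific log bump $\Phi(t)=t^q\log(e+t)^{q-1+\delta}$ (and certain loglog bumps), rather than a general $\bar\Phi\in B_{q'}$, and (ii) the structural range $\frac{p'}{q'}(1-\frac{\alpha}{n})\geq 1$, which excludes $p=q$. They explicitly state that this restriction ``appears to be intrinsic to our proof.'' A sketch that purports to establish the full conjecture for arbitrary $\bar\Phi\in B_{q'}$ and all $1<p\leq q<\infty$ would therefore have to introduce genuinely new ideas, and yours does not do so; instead it quietly papers over the two places where the paper's method actually breaks.

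The concrete gaps are as follows. First, the ``Orlicz Carleson embedding'' you invoke,
\[
\sum_{F\in\mathcal F}\|h\|_{\bar\Phi,F}^{q'}\,u(E_F)\lesssim \|h\|_{L^{q'}(u)}^{q'},
\]
with a $u$-weighted packing $u(E_F)$, is \emph{not} a consequence of $\bar\Phi\in B_{q'}$. The $B_{q'}$ condition (Lemma~\ref{lemma:perez-max}) gives $M_{\bar\Phi}:L^{q'}(dx)\to L^{q'}(dx)$ with respect to \emph{Lebesgue} measure; the paper's embedding step is accordingly $\sum_P\|u^{1/q'}\|_{\bar\Phi_0,P}^{q'}|E(P)|\lesssim\int M_{\bar\Phi_0}(u^{1/q'}\chi_R)^{q'}\,dx\lesssim u(R)$, with Lebesgue packing $|E(P)|$, which is what sparsity provides. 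Your weighted version has no proof and is in general false without an $A_\infty$-type hypothesis on $u$, which the conjecture does not assume. Second, and more fundamentally, you have no mechanism to sum over the corona generations. In the paper's proof this summability comes from Lemma~\ref{lemma:crv}, which splits $\|u^{1/q}\|_{\Phi_0,Q}$ into $\|u^{1/q}\|_{\Phi,Q}^{1-\gamma}\|u^{1/q}\|_{q,Q}^{\gamma}$ and thereby produces a geometric factor $2^{\gamma a}$ making $\sum_{a\leq\log_2 K}$ converge. That lemma is specific to the scale of log (and loglog) bumps and has no analogue for a general Young function with $\bar\Phi\in B_{q'}$; without it your outer sum diverges. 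Third, your sketch never encounters, and therefore cannot explain, the restriction $\frac{p'}{q'}(1-\frac{\alpha}{n})\geq 1$, which in the paper is used in the exponential decay estimate to pull the exponent $\frac{p'}{q'}(1-\frac{\alpha}{n})$ outside a sum of cube volumes (it must be $\geq 1$ for $\sum_j a_j^{r}\leq(\sum_j a_j)^{r}$). A proof that silently bypasses this obstruction is not credible. Your opening pointwise estimate and the decision to build a stopping-time/corona family over the sparse cubes are in the right spirit, but as written the argument proves neither the conjecture nor the paper's own restricted theorem.
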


Until now, Conjecture~\ref{conj:weaktype} was only known when $\Phi$
is power bump (see~\cite{MR1860236,MR1793688}) or a log bump of the
form $\Phi(t) =t^q\log(e+t)^{2q-1+\delta}$.  (This is proved
in~\cite{MR2797562} when $p=q$, but the same proof works in the
case $q>p$.)  In the scale of log bumps the conjecture should hold with
the smaller exponent $q-1+\delta$.  Our first result is a proof of
this for a limited range of values of $p$ and $q$.

\begin{theorem} \label{thm:mainweak}
Given $\alpha$, $0<\al<n$, and $p,\,q$, $1< p\leq q<\infty$,  suppose
\begin{equation} \label{eqn:mainweak1}
 \frac{p'}{q'}\left(1-\frac{\alpha}{n}\right) \geq 1.
\end{equation}
Then the
weak type inequality~\eqref{eqn:weak} holds for every pair of weights
$(u,\sigma)$ that satisfies
\[ \sup_Q |Q|^{\frac{\alpha}{n}+\frac{1}{q}-\frac{1}{p}}
\|u^{\frac{1}{q}}\|_{\Phi,Q} \|\sigma^{\frac{1}{p'}}\|_{p',Q} <
\infty, \]
where $\Phi(t)=t^q\log(e+t)^{q-1+\delta}$, $\delta>0$.  
\end{theorem}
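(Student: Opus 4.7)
The plan is to verify Sawyer's dual testing condition~\eqref{eqn:sawyer2} for the pair $(u,\sigma)$ directly from the log-bump hypothesis; the weak-type inequality~\eqref{eqn:weak} then follows immediately from Theorem~\ref{thm:sawyer}. Thus for every cube $Q_0$ the goal is to show
$$\left(\int_{Q_0}I_\alpha(\chi_{Q_0}u)(x)^{p'}\sigma(x)\,dx\right)^{1/p'} \lesssim u(Q_0)^{1/q'}.$$
By the standard shifted dyadic grid argument we may work with a fixed dyadic operator $I_\alpha^\D$ and split $I_\alpha^\D(\chi_{Q_0}u)$ into a tail indexed by dyadic cubes $R\supsetneq Q_0$ and a local part indexed by $R\subseteq Q_0$. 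The tail is pointwise at most $C\,|Q_0|^{\alpha/n-1}u(Q_0)$ on $Q_0$; integrating this against $\sigma$ in $L^{p'}$ and using the trivial bound $\|u^{1/q}\|_{\Phi,Q_0}\geq(\avgint_{Q_0}u)^{1/q}$ together with the bump hypothesis yields the desired $\lesssim u(Q_0)^{1/q'}$ control on this piece.

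For the local part I would build a corona decomposition of $\D(Q_0)$ adapted to fractional averages --- the main technical device of the paper. Let $\mathcal{F}\subset\D(Q_0)$ be a family of principal cubes chosen by a stopping rule on $u$-averages, so that within each corona the $u$-averages are comparable to the value at the top. A standard geometric-sum computation, valid because $\alpha>0$, then yields the pointwise sparse-type bound
$$I_\alpha^\D(\chi_{Q_0}u)(x)\,\chi_{Q_0}(x)\;\lesssim\;\sum_{P\in\mathcal{F}}|P|^{\alpha/n}\avgint_P u\cdot\chi_P(x).$$
On each principal cube $P$, the generalized H\"older inequality in Orlicz spaces gives $\avgint_P u\leq 2\|u^{1/q}\|_{\Phi,P}\|u^{1/q'}\|_{\bar\Phi,P}$, and combining this with the bump hypothesis produces
$$|P|^{\alpha/n}\avgint_P u\;\lesssim\;\frac{|P|^{\,1/p-1/q}\,\|u^{1/q'}\|_{\bar\Phi,P}}{\|\sigma^{1/p'}\|_{p',P}}.$$

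The $L^{p'}(\sigma)$-norm of the resulting sparse sum is then estimated by a Carleson-embedding argument on $\mathcal{F}$: the factor $\|\sigma^{1/p'}\|_{p',P}^{p'}=\sigma(P)/|P|$ converts the $\sigma$-integration into geometric powers of $|P|$, and the hypothesis $\bar\Phi\in B_{q'}$ supplies $L^{q'}$-boundedness of the Orlicz maximal operator $M_{\bar\Phi}$, which eventually controls the sparse $\|u^{1/q'}\|_{\bar\Phi,P}$-factors by $u(Q_0)^{p'/q'}$, as required.

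The main obstacle, and the source of the restriction~\eqref{eqn:mainweak1}, lies in this final Carleson step: one must balance the $L^{p'}(\sigma)$-scale of the sparse sum against the $L^{q'}$-scale supplied by $\bar\Phi\in B_{q'}$, all while accommodating the fractional factor $|P|^{\alpha/n}$ throughout. A direct comparison of these two scales on the sparse family closes precisely when $(1-\alpha/n)p'/q'\geq 1$, which is exactly~\eqref{eqn:mainweak1}. Removing this constraint to settle Conjecture~\ref{conj:weaktype} in full appears to require a genuinely new ingredient.
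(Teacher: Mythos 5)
Your high-level framework --- reduce to Sawyer's dual testing condition, pass to a dyadic/sparse model, build a corona decomposition, then apply a generalized H\"older step and a Carleson-type embedding --- matches the skeleton of the paper's argument. Your tail estimate is also correct. But the heart of the proof is missing, and as written the argument does not close.

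The main gap is in the ``Carleson-embedding argument on $\mathcal{F}$.'' After your sparse reduction you must bound
\begin{equation*}
\Big\| \sum_{P\in\mathcal F} |P|^{\alpha/n}\Big(\avgint_P u\Big)\chi_P \Big\|_{L^{p'}(\sigma)},
\end{equation*}
and, using the bump hypothesis to rewrite the coefficients, you are implicitly using an estimate of the form $\|\sum_P a_P\chi_P\|_{L^{p'}(\sigma)}^{p'}\lesssim \sum_P a_P^{p'}\sigma(P)$ before the powers of $|P|$ and the $B_{q'}$ bound take over. That inequality is false in general for a Lebesgue-sparse family: it would require the family to be $\sigma$-Carleson, and a corona built on $u$-averages only gives $u$-Carleson sparsity. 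Moreover your choice of corona (stopping on $u$-averages) does make the within-corona geometric sum collapse, but the resulting principal-cube coefficients $a_P=|P|^{\alpha/n}\avgint_P u$ have no geometric decay along a chain $P_0\subsetneq P_1\subsetneq\cdots$ in $\mathcal F$ (the $u$-average halves, but $|P|^{\alpha/n}$ can jump by an unbounded factor), so the pointwise sums $\sum_{P\ni x}a_P$ cannot be controlled either. The paper avoids this by doing the corona on \emph{fractional} $u$-averages $|Q|^{\alpha/n}\avgint_Q u$, which \emph{does} give geometric decay of the coefficients along chains, at the cost that the within-corona sum no longer collapses pointwise. That cost is paid by a further stratification $\Q^a_b(P)$ and the John--Nirenberg-type exponential decay estimate (Lemma~\ref{lemma:expdecyLeb} / inequality \eqref{eqn:expdecay}), and \emph{that} is precisely where the hypothesis $\frac{p'}{q'}(1-\frac{\alpha}{n})\geq 1$ is used --- to pull the exponent out of a sum $\sum_L|L|^{\frac{p'}{q'}(1-\frac{\alpha}{n})}$. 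Your appeal to ``a direct comparison of the two scales'' does not actually produce this mechanism.

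Beyond the exponential decay, you also omit the two other essential technical devices. First, the outer stratification into sets $\Q^a$ where the bump quantity $|Q|^{\alpha/n+1/q-1/p}(\avgint_Q u)^{1/q}(\avgint_Q\sigma)^{1/p'}\simeq 2^a$ is frozen; without it there is no geometric index to sum over. Second, and crucially, the interpolation lemma of Cruz-Uribe, Reznikov and Volberg (Lemma~\ref{lemma:crv}),
$\|u^{1/q}\|_{\Phi_0,Q}\lesssim\|u^{1/q}\|_{\Phi,Q}^{1-\gamma}\|u^{1/q}\|_{q,Q}^{\gamma}$,
which is what produces the factor $2^{\gamma a}$ needed to make the sum over $a\in(-\infty,\log_2 K]$ converge. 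Using plain generalized H\"older with $\Phi$ and $\bar\Phi$, as you propose, yields no such factor and the outer sum diverges. This lemma --- not just the $B_{q'}$ condition --- is what makes the log-bump hypothesis (as opposed to a power bump) work, so omitting it is not a cosmetic gap.
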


The restriction \eqref{eqn:mainweak1} holds when $p$ and $q$ satisfy
the Sobolev relationship~\eqref{eqn:sobolev}, and it also holds for
$p$ and $q$ close to these values.  It does not hold, however, when
$p=q$.  This condition appears to be intrinsic to our proof and a
new approach will be necessary to prove Theorem~\ref{thm:mainweak} for
the full range of $p$ and $q$.  

\begin{remark}
  The proof of the corresponding result for singular integrals is much
  simpler than the proof of Theorem~\ref{thm:mainweak}: it follows by
  extrapolation from a two weight, weak $(1,1)$ inequality for
  singular integrals.  It is conjectured that a similar inequality
  holds for Riesz potentials, and this would yield a simpler proof of
  Theorem~\ref{thm:mainweak}.  See~\cite{MR2797562} for complete
  details.
\end{remark}

\begin{remark}
The weak type results for singular integral operators are sharp in the sense
that they are false if we take $\delta=0$ in the definition of
$\Phi$ (see~\cite{MR1713140}).  Though it has not appeared explicitly in the
  literature, the same is true for Riesz potentials.  For an example
involving commutators of Riesz potentials,
see~\cite{cruz-moen2012}. 
\end{remark}

By a small modification of our proof of Theorem~\ref{thm:mainweak} we
can extend this result to a class of Young functions referred to as
loglog bumps (cf.~\cite{MR2797562}).  Our proof builds upon the recent
work in~\cite{CRV2012}, where a weak type inequality for singular
integrals involving loglog bumps was proved.

\begin{theorem} \label{thm:weakloglog}
With the same hypotheses as before, the conclusion of
Theorem~\ref{thm:mainweak} remains true if 
 $\Phi(t)=t^q\log(e+t)^{q-1}\log\log(e^e+t)^{q-1+\delta}$ for
 $\delta>0$  sufficiently large.
\end{theorem}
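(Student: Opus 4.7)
The plan is to mimic the proof of Theorem~\ref{thm:mainweak} essentially line by line, substituting the loglog bump for the log bump at the single point where the generalized H\"older inequality is applied, and then tracking how the substitution propagates through the final summability estimate.

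First I would pass to the dyadic setting: standard three-grid techniques replace $I_\alpha$ by a dyadic model $I_\alpha^{\D}$, and the loglog bump hypothesis survives the passage to each shifted grid with a harmless constant. I would then run the fractional corona decomposition built in the proof of Theorem~\ref{thm:mainweak}: construct a stopping family $\mathcal{F}$ by iteratively selecting the maximal dyadic descendants $Q \subset F$ on which the fractional average $|Q|^{\alpha/n}\avgint_Q f\sigma$ first exceeds a fixed large multiple of $|F|^{\alpha/n}\avgint_F f\sigma$. The resulting family enjoys a $\sigma$-Carleson packing property and a disjoint decomposition of the ambient cube into principal sets $\{E_F\}_{F\in\mathcal{F}}$.

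Next I would linearize the weak-type bound. Freezing a level $\lambda$ and decomposing $\{I_\alpha^{\D}(f\sigma) > \lambda\}$ along the principal sets, duality reduces matters to controlling, uniformly in normalized test functions $h\in L^{q',1}(u)$, a bilinear sum
\[ \sum_{F\in\mathcal{F}} \bigl(|F|^{\alpha/n}\avgint_F f\sigma\bigr)\, u(E_F)^{1/q}\, h_F. \]
Generalized H\"older inside each $F$ with $\Phi(t)=t^q\log(e+t)^{q-1}\log\log(e^e+t)^{q-1+\delta}$ and its associate $\bar{\Phi}$, combined with the loglog bump hypothesis on $(u,\sigma)$, dominates the fractional average by a $\sigma$-localized $L^p$ norm of $f$ up to a factor bounded uniformly in $F$. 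A double H\"older in the sum over $F$ together with the $\sigma$-Carleson embedding theorem for $\mathcal{F}$ then collapses the remaining sum to $\|f\|_{L^p(\sigma)}\|h\|_{L^{q',1}(u)}$, exactly as in the proof of Theorem~\ref{thm:mainweak}. The restriction~\eqref{eqn:mainweak1} enters precisely as there, in the decoupling step that compares local fractional averages across generations.

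The main obstacle, and the only place where anything new is needed, is the quantitative use of $\bar{\Phi}\in B_{q'}$ in converting the Orlicz norm $\|u^{1/q}\|_{\Phi,F}$ into a quantity that sums against the Carleson packing. For the log bump of Theorem~\ref{thm:mainweak} this step produces an algebraic series $\sum_{k\ge k_0} k^{-(1+\delta)}$, which converges for every $\delta>0$. For the loglog bump the corresponding series has the shape
\[ \sum_{k\ge k_0}\frac{1}{k(\log k)^{1+\delta'}}, \]
where $\delta'=\delta'(\delta,p,q,\alpha)$ is an affine function of $\delta$ with positive slope and a constant term depending on how~\eqref{eqn:mainweak1} enters the corona comparison. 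Convergence forces $\delta'>0$, hence $\delta$ to be sufficiently large --- this is precisely the clause in the statement. The bookkeeping runs parallel to the loglog refinement for singular integrals in~\cite{CRV2012}, and once it is carried out the weak-type bound $\|I_\alpha(f\sigma)\|_{L^{q,\infty}(u)}\lesssim\|f\|_{L^p(\sigma)}$ follows.
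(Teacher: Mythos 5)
Your proposal does not engage with the actual mechanism that drives the loglog argument, and it misdescribes the log-bump argument it claims to be extending.

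First, the paper does not prove Theorem~\ref{thm:mainweak} by a direct weak-type estimate with a corona decomposition on the fractional averages of $f\sigma$ followed by duality. It proves the \emph{testing condition} $[\sigma,u]^\D_{(I^\Sp_\al)^{q',p'}}\lesssim K$ (Theorem~\ref{thm:twoweighttesting}), and then invokes Proposition~\ref{prop:lsut}. The corona decomposition is therefore built on the quantity $|Q|^{\al/n+1/q-1/p}(\avgint_Q u)^{1/q}(\avgint_Q\sigma)^{1/p'}$ and, in a second layer, on the fractional averages of $u$, not of $f\sigma$. Your stopping family in $f\sigma$ belongs to a genuinely different proof scheme (closer to the Lacey--Sawyer--Uriarte-Tuero/Hyt\"onen two-weight machinery) and you give no argument that it actually closes.

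Second, and more importantly, you have misidentified where $\delta$ enters. For the log bump the paper does not produce a convergent series of the form $\sum_k k^{-(1+\delta)}$: it uses the sub-multiplicativity Lemma~\ref{lemma:crv} to extract a factor $\|u^{1/q}\|_{q,P}^{\gamma}$, which after the stopping-time normalization becomes a geometric factor $2^{\gamma a}$, and the sum over $a\in\Omega(K)$ is geometric. The $B_{q'}$ condition is used only to make $M_{\bar\Phi_0}$ bounded on $L^{q'}$; it does not produce a series. For the loglog bump, the analogue Lemma~\ref{lem:loglog} replaces the power gain $(\,\cdot\,)^{\gamma}$ by $\phi(t)=(\log(C/t))^{-\kappa}$, and since $\phi$ has no power gain, a single corona in $a$ is no longer enough: the paper performs a \emph{second} stopping time in an auxiliary parameter $c$ (the level of the $\Phi$-bumped $A_{p,q}^\alpha$ quantity), producing the double sum
\[
\sum_{a\in\Omega(K)}\ \sum_{\substack{c\in\Omega(K)\\ c\ge a}}\frac{2^c}{(1+c-a)^\kappa},
\]
which converges precisely when $\kappa>1$. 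That is the source of the ``$\delta$ sufficiently large'' clause. Your claimed series $\sum_k \frac{1}{k(\log k)^{1+\delta'}}$ is not of this form and does not arise from the actual argument; convergence of that series for $\delta'>0$ would, incidentally, give the result for \emph{all} $\delta>0$, contradicting the theorem statement. The essential missing ingredients in your proposal are therefore: (i) the CRV-type comparison lemma (Lemma~\ref{lem:loglog}) controlling $\|u^{1/q}\|_{\Phi_0,Q}$ in terms of $\|u^{1/q}\|_{\Phi,Q}$ and the ratio $\|u^{1/q}\|_{q,Q}/\|u^{1/q}\|_{\Phi,Q}$, and (ii) the second stopping time that converts the resulting logarithmic loss into the convergent double series above.
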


In the scale of loglog bumps, Conjecture~\ref{conj:weaktype} holds for
loglog bumps if we take any $\delta>0$.   But again this restriction
on $\delta$ seems to be intrinsic to our proof.

\bigskip

Our second result in this vein is an improvement of
Theorem~\ref{thm:perez-strong} in the scale of log bumps.  We
believe that the single condition~\eqref{eqn:perez1} with a bump on
each term can be replaced by two conditions, each with a single bump
condition.  This is referred to as a separated bump condition.  More
precisely, we make the following conjecture.

\begin{conjecture} \label{conj:strong}
Given $\alpha$, $0<\al<n$, and $p,\,q$, $1< p\leq q<\infty$, then the
strong type inequality~\eqref{eqn:strong} holds for every pair of weights
$(u,\sigma)$ that satisfies
\begin{gather*}
\sup_Q |Q|^{\frac{\al}{n}+\frac1q-\frac1p}\|u^{\frac1q}\|_{\Phi,Q}
\|\sigma^{\frac{1}{p'}}\|_{p',Q} <\infty, \\
\sup_Q |Q|^{\frac{\al}{n}+\frac1q-\frac1p} \|u^{\frac{1}{q}}\|_{q,Q}
\|\sigma^{\frac{1}{p'}}\|_{\Psi,Q}<\infty,
\end{gather*}
where $\Phi,\,\Psi$ are Young functions such that $\bar{\Phi}\in
B_{q'}$  and $\bar{\Psi} \in B_p$. 
\end{conjecture}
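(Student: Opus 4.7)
The plan is to prove Conjecture~\ref{conj:strong} by verifying Sawyer's two testing conditions from Theorem~\ref{thm:sawyer}. The two separated bump hypotheses are exchanged by the substitution $(u,\sigma,p,q)\mapsto(\sigma,u,q',p')$: this substitution fixes the scaling exponent $\alpha/n+1/q-1/p=\alpha/n+1/p'-1/q'$, swaps the $\Phi$- and $\Psi$-bumped conditions, and sends~\eqref{eqn:sawyer1} to~\eqref{eqn:sawyer2} (with the $B_{q'}$ and $B_p$ requirements also changing places). Hence it suffices to verify the single estimate
\begin{equation*}
\left(\int_{Q_0} I_\alpha(\chi_{Q_0}\sigma)(x)^q u(x)\,dx\right)^{1/q} \lesssim \sigma(Q_0)^{1/p}
\end{equation*}
for every cube $Q_0$, using only the second separated bump hypothesis (the one with the $\Psi$-bump on $\sigma$).

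First I would invoke the sparse pointwise domination $I_\alpha f \lesssim \sum_{Q\in\mathcal{S}}|Q|^{\alpha/n}\langle |f|\rangle_Q\chi_Q$ for a sparse family $\mathcal{S}\subset\mathcal{D}$, reducing the target estimate to one for the sparse fractional operator. Then I would construct the fractional corona decomposition of $Q_0$ based on the averages $\tau_Q(\sigma):=|Q|^{\alpha/n}\langle\sigma\rangle_Q$: starting from $F_0=Q_0$, let $\mathcal{F}$ be the family of principal cubes obtained by iteratively selecting, inside each stopping cube $F$, the maximal dyadic $Q\subsetneq F$ with $\tau_Q(\sigma)>2\tau_F(\sigma)$. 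Standard arguments give the $\sigma$-sparseness $\sigma(F)\leq 2\sigma(E_F)$ on the corona pieces $E_F:=F\setminus\bigcup_{F'\in\mathrm{ch}(F)}F'$, and along each corona the averages $\tau_Q(\sigma)$ are comparable to $\tau_F(\sigma)$. This corona-linearization should reduce the problem to a principal-cube sum
\begin{equation*}
\sum_{F\in\mathcal{F}}\tau_F(\sigma)^q u(F) \lesssim \sigma(Q_0)^{q/p}.
\end{equation*}

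The second bump hypothesis then enters through the algebraic identity $\tau_F(\sigma)^q u(F) = A_F^q \langle\sigma\rangle_F^{q/p'}\sigma(F)^{q/p}$, where $A_F:=|F|^{\alpha/n+1/q-1/p}\langle u\rangle_F^{1/q}$. The bump condition reads $A_F\|\sigma^{1/p'}\|_{\Psi,F}\lesssim 1$, so
\begin{equation*}
\sum_{F\in\mathcal{F}}\tau_F(\sigma)^q u(F) \lesssim \sum_{F\in\mathcal{F}}\left(\frac{\|\sigma^{1/p'}\|_{p',F}}{\|\sigma^{1/p'}\|_{\Psi,F}}\right)^q \sigma(F)^{q/p}.
\end{equation*}
The boundedness of the Orlicz maximal operator $M_\Psi$ on $L^p(dx)$ (which follows from $\bar{\Psi}\in B_p$), together with the $\sigma$-sparseness of $\mathcal{F}$ and the elementary inequality $\sum_F a_F^{q/p}\leq (\sum_F a_F)^{q/p}$ for $q/p\geq 1$ applied to the disjoint sets $E_F$, should then control this sum by $\sigma(Q_0)^{q/p}$ via a dual Carleson embedding.

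The main obstacle I anticipate is the linearization step that produces the clean principal-cube sum. For $q>p$ the naive expansion of $\bigl\|\sum_Q\tau_Q(\sigma)\chi_Q\bigr\|_{L^q(u)}^q$ generates nested products rather than a per-cube sum, so an auxiliary corona adapted to the unbumped weight $u$ appears to be unavoidable; the two coronas must then be interleaved in the style of Nazarov--Treil--Volberg and of Lerner's resolution of the analogous conjecture for Calder\'on--Zygmund operators. This is where the fractional scaling $|Q|^{\alpha/n}$ complicates the argument most severely: the $u$-corona and $\sigma$-corona must couple through both Orlicz maximal operators $M_\Phi$ and $M_\Psi$ (requiring both $\bar{\Phi}\in B_{q'}$ and $\bar{\Psi}\in B_p$), and obtaining a clean summation estimate uniformly in $\alpha$ is the delicate technical heart of the problem, and the reason the conjecture remains open.
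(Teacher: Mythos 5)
What you set out to prove is, in the paper itself, an open conjecture. The authors do not prove Conjecture~\ref{conj:strong}; they establish only partial results (Theorems~\ref{thm:mainstrong} and~\ref{thm:strongloglog}) valid for specific log and loglog bumps, and only under the additional restriction~\eqref{eqn:rangepq2}, which excludes $p=q$ for instance. Your own closing paragraph acknowledges that your sketch stops short of a full proof, and indeed it must, since you are aiming at something strictly stronger than what the paper achieves. That said, your opening reduction is correct and matches the paper's setup: the substitution $(u,\sigma,p,q)\mapsto(\sigma,u,q',p')$ preserves $\alpha/n+1/q-1/p$, swaps Sawyer's two testing conditions and the two separated bump hypotheses, and the paper uses exactly this self-adjointness of $I_\alpha^\Sp$ together with the Lacey--Sawyer--Uriarte-Tuero dyadic testing characterization (Proposition~\ref{prop:lsut}) to reduce to a single estimate. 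Your corona plan is also the scheme of Section~\ref{proofs}: stopping families $\Q^a$, $\Ca^a$, $\Q^a_b(P)$ and the exponential decay of Lemma~\ref{lemma:expdecyLeb}.

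Where your sketch has concrete gaps --- and where the paper's own argument is limited --- is in two places. First, after the corona machinery one must sum over the stopping scales $a$ on which the fractional $A_{p,q}$-type quantity is roughly constant, and that series does not converge for a general Young function with $\bar{\Phi}\in B_{q'}$. The paper forces convergence via Lemma~\ref{lemma:crv} (Lemma~\ref{lem:loglog} in the loglog case), an interpolation inequality specific to the logarithmic scale that trades a fraction of the bump for the unbumped constant and so extracts a geometric factor $2^{\gamma a}$; your sketch offers no substitute for this step, and none is known at the level of generality of the conjecture. Second, the $\sigma$-exponential-decay estimate along the corona requires pulling a power $\geq 1$ out of a sum over sparse cubes, which is precisely what forces $\frac{p'}{q'}(1-\frac{\alpha}{n})\geq 1$ and dually $\frac{q}{p}(1-\frac{\alpha}{n})\geq 1$; the authors remark this restriction appears intrinsic to the method, and your plan does not say how to dispense with it, while the conjecture is asserted for all $1<p\leq q<\infty$. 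Finally, your diagnosis that the key obstruction lies in coupling $M_\Phi$ and $M_\Psi$ is not quite right: in the paper's proof each testing condition is handled with only one Orlicz maximal operator, and the genuine obstacles are the two just described.
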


The motivation for this conjecture is recent work on two weight norm
inequalities for singular integrals.  The corresponding conjecture for
singular integrals has been implicit in the literature, as it is
closely connected to a long-standing conjecture of Muckenhoupt and
Wheeden, now known to be false.  It was recently made explicit
in~\cite{CRV2012}; this paper also discusses its connection with
the Muckenhoupt-Wheeden conjecture.  Moreover, the authors also proved
the conjecture in the special case of log bumps and certain loglog
bumps.  We can prove these kinds of result for Riesz potentials.

\begin{theorem} \label{thm:mainstrong} 
Given $\alpha$, $0<\al<n$, and $p,\,q$, $1< p\leq q<\infty$,  suppose
\begin{equation}\label{eqn:rangepq2} 
\min\Big(\frac{q}{p},\frac{p'}{q'}\Big)(1-\frac{\al}{n})\geq 1.
\end{equation} 
Then the
strong type inequality~\eqref{eqn:strong} holds for every pair of weights
$(u,\sigma)$ that satisfies
\begin{gather*}
\sup_Q |Q|^{\frac{\al}{n}+\frac1q-\frac1p}\|u^{\frac1q}\|_{\Phi,Q}
\|\sigma^{\frac{1}{p'}}\|_{p',Q} <\infty, \\
\sup_Q |Q|^{\frac{\al}{n}+\frac1q-\frac1p} \|u^{\frac{1}{q}}\|_{q,Q}
\|\sigma^{\frac{1}{p'}}\|_{\Psi,Q}<\infty,
\end{gather*}
where 
$\Phi(t)=t^q\log(e+t)^{q-1+\delta}$ and $\Psi(t)=t^{p'}\log(e+t)^{p'-1+\delta}$.
\end{theorem}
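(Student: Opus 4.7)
My plan is to follow the corona-decomposition strategy of \cite{CRV2012} for separated-bump estimates of singular integrals, adapted here to the fractional setting. By the standard dyadic reduction for Riesz potentials it suffices to bound the dyadic operator $I_\alpha^{\mathscr{D}}f(x) = \sum_{Q \in \mathscr{D}}|Q|^{\alpha/n}\langle f\rangle_Q \chi_Q(x)$ for a single dyadic grid $\mathscr{D}$. Dualizing \eqref{eqn:strong} then reduces the theorem to showing, for nonnegative $f,g$,
\[ \mathcal{S} := \sum_{Q\in\mathscr{D}} |Q|^{\alpha/n-1}\,\sigma(Q)u(Q)\,\langle f\rangle_Q^\sigma \langle g\rangle_Q^u \;\lesssim\; \|f\|_{L^p(\sigma)}\|g\|_{L^{q'}(u)}, \]
where $\langle h\rangle_Q^\nu := \nu(Q)^{-1}\int_Q h\,d\nu$.

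Next I would build two parallel corona decompositions: a family $\{P_j\}$ of $\sigma$-principal cubes on which the averages $\langle f\rangle^\sigma$ grow geometrically, and a family $\{R_k\}$ of $u$-principal cubes on which $\langle g\rangle^u$ grows geometrically. Writing $\pi_f(Q)$ and $\pi_g(Q)$ for the minimal members of these families containing $Q$, one has $\langle f\rangle_Q^\sigma \lesssim \langle f\rangle_{\pi_f(Q)}^\sigma$ and $\langle g\rangle_Q^u \lesssim \langle g\rangle_{\pi_g(Q)}^u$. Split $\mathcal{S} = \mathcal{S}_1 + \mathcal{S}_2$ according to whether $\pi_f(Q)\subseteq \pi_g(Q)$ or $\pi_g(Q)\subsetneq \pi_f(Q)$. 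In each block of either sum, Hölder's inequality in Orlicz spaces, together with the appropriate one of the two separated bump hypotheses, bounds the block sum in a form that allows the inner geometric series to be summed via the $B_p$ or $B_{q'}$ condition, in the spirit of P\'erez \cite{MR1291534}. A final application of Hölder with exponents $p$ and $q'$ on the outer sum over principal cubes, combined with Carleson packing estimates of the form $\sum_P \sigma(E_P)(\langle f\rangle_P^\sigma)^p \lesssim \|f\|_{L^p(\sigma)}^p$ (and its analogue for $g$) inherent in the stopping-cube construction, closes the estimate.

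The main obstacle is the block estimate itself, specifically the way one distributes the fractional factor $|Q|^{\alpha/n + 1/q - 1/p}$ between the bumped weight norm and an ordinary $L^{p'}$ (respectively $L^{q}$) norm of the other weight. A careful tracking of exponents in the Hölder step shows that collapsing the $\mathcal{S}_1$ block sums requires $(q/p)(1-\alpha/n) \ge 1$, while the $\mathcal{S}_2$ block sums require $(p'/q')(1-\alpha/n)\ge 1$; together these give \eqref{eqn:rangepq2}. This compatibility between the internal Orlicz--Hölder exponents and the outer $(p,q')$-pairing seems intrinsic to the corona approach, and removing the restriction to obtain Conjecture \ref{conj:strong} in full would likely require a genuinely different argument.
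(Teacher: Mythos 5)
Your proposal takes a genuinely different route from the paper, and while the general corona philosophy is shared, the critical steps are left unsubstantiated and there is a real gap in the mechanism.

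The paper does not dualize to a bilinear form with two parallel coronas. Instead it invokes the dyadic testing-condition theorem of Lacey--Sawyer--Uriarte-Tuero (Proposition~\ref{prop:lsut}) to reduce the strong-type inequality to the two testing estimates, and then proves each testing estimate (Theorem~\ref{thm:twoweighttesting}) by a \emph{single} corona decomposition, this time of the fractional averages $|Q|^{\alpha/n}\avgint_Q u\,dx$ of the \emph{weight}, not of an arbitrary test function. That decomposition has three layers: a freezing $\Q^a$ of the joint quantity $|Q|^{\alpha/n+1/q-1/p}(\avgint_Q u)^{1/q}(\avgint_Q\sigma)^{1/p'}$, stopping cubes $\Ca^a$ where the fractional average doubles, and a further layer $\Q^a_b(P)$. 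Two ingredients make the final summation close: (i) the exponential decay estimate (Lemma~\ref{lemma:expdecyLeb}), which is exactly where the condition $\frac{p'}{q'}(1-\frac{\alpha}{n})\geq 1$ enters, because one needs to bound $\sum_L|L|^{\frac{p'}{q'}(1-\alpha/n)}$ by $\bigl(\sum_L|L|\bigr)^{\frac{p'}{q'}(1-\alpha/n)}$; and (ii) the factorization lemma from~\cite{CRV2012} (Lemma~\ref{lemma:crv}), $\|u^{1/q}\|_{\Phi_0,Q}\lesssim\|u^{1/q}\|_{\Phi,Q}^{1-\gamma}\|u^{1/q}\|_{q,Q}^\gamma$, which converts the constant $K$ into $K^{1-\gamma}2^{\gamma a}$ so that the sum over the infinitely many levels $a\in(-\infty,\log_2 K]$ converges.

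In your bilinear-form version, the quantity to be controlled at a generic cube $Q$ is $|Q|^{\alpha/n-1}\sigma(Q)u(Q)$, with homogeneity $(\sigma,u)\mapsto(\sigma^1,u^1)$, whereas the separated bump hypotheses control $|Q|^{\alpha/n+1/q-1/p}\|u^{1/q}\|_{\Phi,Q}(\avgint_Q\sigma)^{1/p'}$, i.e.\ exponent $1/q$ on $u$ and $1/p'$ on $\sigma$. After applying Orlicz--H\"older to $\avgint_Q u$ and peeling off the bump hypothesis, one is left with the residual factors $(\avgint_Q\sigma)^{1/p}\,\|u^{1/q'}\|_{\bar\Phi,Q}\,|Q|^{1-1/q+1/p}$, and you give no indication of how these recombine with the factors $\langle f\rangle^\sigma_{\pi_f(Q)}\langle g\rangle^u_{\pi_g(Q)}$ and the Carleson packing to close at exponents $p$ and $q'$. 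By contrast, the paper's testing form $u(P)^{p'}\sigma(P)$ has a homogeneity that lines up cleanly with a single bump, and the factorization lemma supplies the geometric factor that your sketch does not produce. You assert that ``a careful tracking of exponents'' yields \eqref{eqn:rangepq2}, but this is precisely the step that must be carried out and is the actual content of the proof; as written it is a claim, not an argument. Without either the LSU reduction or some analogue of Lemma~\ref{lemma:crv} for your setup, the summations in your block estimates have no visible reason to converge.
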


\begin{theorem} \label{thm:strongloglog} 
With the same hypotheses as before, the conclusion of
Theorem~\ref{thm:mainstrong} remains true if 
\begin{gather*}
\Phi(t)=t^q\log(e+t)^{q-1}\log\log(e^e+t)^{q-1+\delta}\\ 
\Psi(t)=t^{p'}\log(e+t)^{p'-1}\log\log(e^e+t)^{p'-1+\delta}
\end{gather*}
for $\delta>0$  sufficiently large.
\end{theorem}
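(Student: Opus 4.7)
The plan is to follow the scheme of the proof of Theorem~\ref{thm:mainstrong}, substituting the loglog-bump Orlicz machinery developed in \cite{CRV2012} for its log-bump counterpart. The overall strategy has three steps: (i) by duality reduce \eqref{eqn:strong} to a bilinear estimate of the form $\int_{\R^n} I_\alpha(f\sigma)\,g\,u\,dx \lesssim \|f\|_{L^p(\sigma)}\|g\|_{L^{q'}(u)}$ for nonnegative $f,g$; (ii) apply the fractional corona decomposition, the main tool of the paper, to discretize the bilinear form into a double sum over sparse families of cubes, one adapted to $\sigma$ and one to $u$; (iii) estimate each term via Orlicz--H\"older using the Young pairs $(\Phi,\bar\Phi)$ and $(\Psi,\bar\Psi)$, so that the separated bump averages appear and can be absorbed into the hypothesis.

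Most of the scaffolding set up in the proof of Theorem~\ref{thm:mainstrong} transfers without change, since it only uses the general corona structure together with the abstract fact that $\bar\Phi \in B_{q'}$ and $\bar\Psi \in B_p$, both of which still hold for the loglog choices of $\Phi,\Psi$ provided $\delta > 0$. What must be reworked are the two Orlicz--Carleson-type sums that arise after Orlicz--H\"older, each of the schematic form
$$\sum_{Q\in\mathcal{F}}\sigma(Q)\|f\|_{\bar\Psi,Q}^{p}\lesssim \|f\|_{L^p(\sigma)}^{p},$$
together with its companion for the triple $(u,\bar\Phi,q')$ over the dual sparse family. For log bumps these estimates follow from the $B_p$/$B_{q'}$ integrals plus a Carleson packing argument along the sparse corona; in the loglog regime the same scheme works, but the convergence of the relevant Orlicz integrals only holds when $\delta$ exceeds a quantitative threshold depending on $p$, $q$, and $\alpha/n$, analogously to the singular-integral case treated in \cite{CRV2012}.

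The main obstacle is keeping track of this threshold on both sides simultaneously. The fractional kernel forces the factor $|Q|^{\alpha/n+1/q-1/p}$ to be split across the two corona sums, and the hypothesis \eqref{eqn:rangepq2} is precisely what allows the resulting exponents to be balanced so that both Orlicz--Carleson estimates remain of the required form after the split. Once the individual $\delta$ thresholds are identified in the two sums, the conclusion follows by taking $\delta$ to be the maximum of the two; this is the meaning of ``sufficiently large'' in the statement. Beyond the adaptations already carried out for singular integrals in \cite{CRV2012}, no essentially new idea should be required, and the main burden of the argument is careful bookkeeping of exponents in the loglog Orlicz norms.
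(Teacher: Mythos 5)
Your overall roadmap (reduce to a sparse corona model, then close with Orlicz--H\"older and a packing argument) points in the right direction, but it misses the one genuinely new idea needed in the loglog regime and misattributes the source of the ``$\delta$ sufficiently large'' constraint.

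In the log-bump case (Theorem~\ref{thm:twoweighttesting}) the corona sum over $a$ is closed by Lemma~\ref{lemma:crv}, the multiplicative wiggle-room estimate $\|u^{1/q}\|_{\Phi_0,Q}\lesssim \|u^{1/q}\|_{\Phi,Q}^{1-\gamma}\|u^{1/q}\|_{q,Q}^{\gamma}$ with a fixed $\gamma\in(0,1)$. Plugging this into the stopping-cube sum yields a geometric factor $2^{\gamma a}$, which is what makes $\sum_{a\leq\log_2 K}$ converge. This is \emph{not} merely ``$B_{q'}/B_p$ membership plus a Carleson packing argument'': remove that power-law gain and the $a$-sum diverges. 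For loglog bumps the analogous Lemma~\ref{lem:loglog} gives only a \emph{logarithmic} gain, $\phi(t)=\log(C/t)^{-\kappa}$. Your proposal never acknowledges this structural weakening, and with only a log gain the straight adaptation of the log-bump argument fails. The paper's fix is an additional layer of stopping time: for each $a$ one further splits $\Ca^a=\bigcup_{c\geq a}\Ca^a_c$ according to the size of $|P|^{\frac{\al}{n}+\frac1q-\frac1p}\|u^{1/q}\|_{\Phi,P}\big(\dashint_P\sigma\big)^{1/p'}$, on which the ratio $\|u^{1/q}\|_{q,P}/\|u^{1/q}\|_{\Phi,P}\simeq 2^{a-c}$ is frozen, so that $\phi$ contributes a factor $(1+c-a)^{-\kappa}$. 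The convergence of the resulting double sum $\sum_{c\leq\log_2K}2^c\sum_{a\leq c}(1+c-a)^{-\kappa}$ requires $\kappa>1$, which is precisely where ``$\delta$ sufficiently large'' enters.

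Two further misattributions: (1) the $B_{q'}$/$B_p$ membership of the damped associates $\bar\Phi_0,\bar\Psi_0$ holds for \emph{every} $\delta>0$, so the threshold on $\delta$ does not come from ``convergence of the relevant Orlicz integrals''; and (2) the range restriction \eqref{eqn:rangepq2} is used inside the exponential-decay estimate (the step that pulls a power $\frac{p'}{q'}(1-\frac{\al}{n})\geq 1$ or $\frac{q}{p}(1-\frac{\al}{n})\geq1$ out of a sum of $|L|$'s), not in balancing exponents across two ``Orlicz--Carleson'' sums. Finally, the paper does not pass through a bilinear form; it goes through the two Sawyer-type testing constants via Proposition~\ref{prop:lsut}, and the symmetry of the sparse operator handles the dual side. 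Your plan could likely be made to work via the bilinear route as well, but as written the proposal lacks the essential second stopping-time decomposition and so does not close.
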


Similar to the restriction in Theorem~\ref{thm:mainweak}, 
\eqref{eqn:rangepq2} includes $p$ and $q$ that satisfy the Sobolev
relationship~\eqref{eqn:sobolev} but does not extend to include the
case $p=q$.

\medskip

Finally, as an application of our weak type results we can prove a two weight,
Sobolev inequality.     Such inequalities follow immediately from our
strong type results and the well-known inequality
$$|f(x)|\lesssim I_1(|\nabla f|)(x).$$
However, by the truncation method of Maz'ya \cite{MR2777530} (see also
\cite{MR1683160,MR1187073}), a strong type inequality for the gradient
can be deduced from a weak type inequality for the Riesz potential.
The following corollary to Theorem~\ref{thm:mainweak} can be proved
exactly as~\cite[Theorem~2.7]{MR2652182}.  (See also~\cite[Lemma~4.31]{MR2797562}.)

\begin{corollary} \label{cor:sobolevest} 
Given $p,\,q$, $1< p\leq q<\infty$,  suppose $\frac{p'}{q'}\geq n'$.
Then for all smooth functions $f$ with compact support, 
$$\left(\int_{\R^n}|f(x)|^q u(x)\,dx\right)^{1/q}\lesssim 
\left(\int_{\R^n}|\nabla f(x)|^pv(x)\,dx\right)^{1/p}$$
for all pairs of weights $(u,v)$  that satisfy
$$\sup_Q |Q|^{\frac{1}{n}+\frac1q-\frac1p}\|u^{\frac1q}\|_{\Phi,Q}\|v^{-1/p}\|_{p',Q}
<\infty,$$
where  $\Phi(t)=t^q\log(e+t)^{q-1+\delta}$.
\end{corollary}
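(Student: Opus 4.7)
The plan is to derive the Sobolev inequality from the weak type Riesz potential bound in Theorem~\ref{thm:mainweak} via the truncation method of Maz'ya, following the template of \cite[Theorem~2.7]{MR2652182}. The three steps are: reduce to the weak type inequality for $I_1$ with a suitable dual weight; pass to a weak type Sobolev inequality via a pointwise bound; and upgrade weak to strong via truncation.

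First, I would specialize Theorem~\ref{thm:mainweak} to $\alpha=1$. The hypothesis $p'/q' \geq n'$ is exactly condition \eqref{eqn:mainweak1} with $\alpha=1$, since $n' = n/(n-1) = (1-1/n)^{-1}$. Setting $\sigma := v^{1-p}$, so that $\sigma^{1/p'} = v^{-1/p}$, the bump assumption of the corollary becomes exactly the bump assumption of Theorem~\ref{thm:mainweak}. A straightforward change of variable ($F = f\sigma$) in \eqref{eqn:weak} rewrites the resulting weak type bound as
\[
\|I_1 F\|_{L^{q,\infty}(u)} \lesssim \left(\int_{\R^n} |F(x)|^p v(x)\,dx\right)^{1/p},
\]
valid for all nonnegative $F$.

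Next, I would apply this to $F = |\nabla g|$ and combine it with the classical pointwise bound $|g(x)| \lesssim I_1(|\nabla g|)(x)$ for compactly supported Lipschitz $g$, obtaining a weak type Sobolev estimate
\[
\|g\|_{L^{q,\infty}(u)} \lesssim \left(\int_{\R^n} |\nabla g(x)|^p v(x)\,dx\right)^{1/p}.
\]

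The last step is Maz'ya's truncation. For smooth $f$ of compact support, set $f_k(x) := \min\bigl(\max(|f(x)|-2^k,0),\,2^k\bigr)$ and $E_k := \{x : 2^k < |f(x)| \le 2^{k+1}\}$. Each $f_k$ is Lipschitz and compactly supported with $|\nabla f_k| = |\nabla f|\,\chi_{\{2^k < |f| < 2^{k+1}\}}$ almost everywhere, and the inclusion $\{|f|>2^{k+1}\} \subset \{f_k = 2^k\}$ together with the weak type Sobolev estimate applied to $f_k$ yields
\[
u(E_{k+1}) \;\lesssim\; 2^{-kq}\left(\int_{E_k}|\nabla f|^p v\right)^{q/p}.
\]
Since $q/p \geq 1$ (a consequence of $p'/q'\geq n'>1$), the sequence inclusion $\ell^1\hookrightarrow \ell^{q/p}$ gives $\sum_k a_k^{q/p} \leq (\sum_k a_k)^{q/p}$, so summing the bound $\int_{E_{k+1}}|f|^q u \leq 2^{(k+2)q} u(E_{k+1})$ over $k\in\Z$ and using disjointness of the $E_k$ delivers the desired strong type inequality.

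I do not expect any real obstacle here: every ingredient is already in the literature, and the argument is essentially identical to \cite[Theorem~2.7]{MR2652182} and \cite[Lemma~4.31]{MR2797562}. The only minor technical point to verify is that the weak type inequality, originally established on smooth functions through the pointwise Riesz potential representation, extends to the Lipschitz truncations $f_k$; this follows by a standard mollification argument.
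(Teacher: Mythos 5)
Your argument is correct and is exactly the Maz'ya truncation scheme the paper points to via \cite[Theorem~2.7]{MR2652182} and \cite[Lemma~4.31]{MR2797562}: specialize Theorem~\ref{thm:mainweak} to $\alpha=1$ (where \eqref{eqn:mainweak1} becomes $p'/q'\geq n'$), pass to a weak-type Sobolev inequality through $|g|\lesssim I_1(|\nabla g|)$, and sum over the dyadic truncations using $q/p\geq 1$. One small slip to fix: the dual weight should be $\sigma:=v^{1-p'}$ rather than $v^{1-p}$ --- then $\sigma^{1/p'}=v^{(1-p')/p'}=v^{-1/p}$ matches the bump condition, and $\sigma^{1-p}=v^{(1-p')(1-p)}=v$ is what makes the change of variable $F=f\sigma$ land on $\bigl(\int|F|^pv\,dx\bigr)^{1/p}$.
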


\subsection*{Organization}
The remainder of this paper is organized as follows.  In
Section~\ref{preliminaries} we gather some results about dyadic
operators that are used in our proofs.  In particular, we state a
sharp dyadic version of Theorem~\ref{thm:sawyer}.   In
Section~\ref{sec3} we prove Theorem \ref{thm:mixedfrac}, and in
Section~\ref{proofs} we prove Theorems \ref{thm:mainweak},
\ref{thm:weakloglog}, \ref{thm:mainstrong}
and~\ref{thm:strongloglog}.  

Throughout the paper, all of the notation we will use will be standard
or defined as needed.  All cubes in $\R^n$ will assume to be half
open with sides parallel to the axes.  Given a cube, $Q$, $\ell(Q)$
will denote its side length.  Given a set $E\subseteq \R^n$, $|E|$
will denote the Lebesgue measure of $E$, $w(E)=\int_E w\,dx$ the
weighted measure of $E$, and $\dashint_E w\,dx={|E|^{-1}}\int_E
w\,dx=w(E)/|E|$ the average of $w$ over $E$.    In proving
inequalities, if we write $A \lesssim B$, we mean that $A \leq CB$,
where the constant $C$ can depend on $\alpha$, $p$ and $n$, but does
not depend on the weights $u$ or $\sigma$, nor on the function.   If
we write $A\simeq B$, then $A\lesssim B$ and $B\lesssim A$.

\section{Dyadic Riesz potentials}
\label{preliminaries}

In this section we define two dyadic versions of the Riesz potential,
and show how these can be used to approximate $I_\alpha$.  We begin by
defining special collections of cubes, known as {\it dyadic
  grids} or {\it filtrations}.  A dyadic grid $\D$ 
is a countable collection of cubes that has the following properties:
\begin{enumerate}[(1)]
\item  $Q\in \D$ $\Rightarrow$ $\ell(Q)=2^k$ for some $k\in \Z$,
\item $Q,P\in \D$ $\Rightarrow$ $Q\cap P\in \{\varnothing,P,Q\}$,
\item and for each $k\in \Z$ the set $\D_k=\{Q\in \D:\ell(Q)=2^k\}$ forms a partition of $\R^n$. 
\end{enumerate}
The collection of dyadic cubes used to form the well-known Calder\'on-Zygmund
decomposition are a dyadic grid, as are all of the translates of these
cubes.  Below we will make extensive use of the dyadic grids
$$\D^t=\{2^{-k}([0,1)^n+m+(-1)^kt):k\in \Z, m\in \Z^d\}, \qquad t\in \{0,1/3\}^n.$$
The importance of these grids is shown by the following proposition;
a proof can be found in~\cite{Lern2012}.

\begin{prop} \label{prop:shiftdyad} 
  Given any cube $Q$ in $\R^n$ there exists a $t\in \{0,1/3\}^n$
  and a cube $Q_t\in \D^t$ such that $Q\subseteq Q_t$ and
  $\ell(Q_t)\leq 6\ell(Q)$.
\end{prop}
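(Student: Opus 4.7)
The plan is to prove Proposition~\ref{prop:shiftdyad} by the classical ``one-third trick'' applied coordinate by coordinate. First, given $Q$ with side length $\ell(Q)$, I would choose the unique integer $k$ so that the scale $L := 2^{-k}$ lies in $[3\ell(Q),\, 6\ell(Q))$. At this scale every cube of $\D^t$ has side length $L \leq 6\ell(Q)$, so the size bound in the proposition is automatic; what remains is to produce, for an appropriate $t \in \{0,1/3\}^n$, an actual cube at scale $L$ in $\D^t$ containing $Q$.

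Next, I would reduce to one dimension. Writing $Q = \prod_{i=1}^n [a_i, a_i + \ell(Q))$, the cubes of $\D^t$ at scale $L$ are Cartesian products of the 1D intervals $[L(m + (-1)^k t_i),\, L(m+1+(-1)^k t_i))$, so it suffices, for each coordinate $i$ independently, to choose $t_i \in \{0,1/3\}$ so that $[a_i, a_i + \ell(Q))$ is contained in a single such 1D shifted dyadic interval.

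The core of the proof is then the one-third trick in dimension one. Writing $a_i/L = j + \theta$ with $j \in \Z$ and $\theta \in [0,1)$, a short direct calculation shows that $[a_i, a_i+\ell(Q))$ fails to fit inside a single cell of the shifted grid determined by $t_i$ exactly when $\theta$ belongs to a ``bad'' subinterval of $[0,1)$ of length $\ell(Q)/L$. For $t_i = 0$ this bad set is $(1 - \ell(Q)/L,\, 1)$; for $t_i = 1/3$ it is $(1/3 - \ell(Q)/L,\, 1/3)$ when $k$ is even and $(2/3 - \ell(Q)/L,\, 2/3)$ when $k$ is odd. Since our choice of $k$ guarantees $\ell(Q)/L \leq 1/3$, in either parity the two bad sets are disjoint subsets of $[0,1)$, and hence at least one of $t_i = 0$ or $t_i = 1/3$ always works.

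Assembling coordinate by coordinate produces the required $t \in \{0,1/3\}^n$ and the product cube $Q_t \in \D^t$ containing $Q$. The only mild obstacle is the bookkeeping for the parity-dependent sign $(-1)^k$ in the definition of $\D^t$, but this is inessential: for each fixed $k$ the available pair of shifts $\{0,\, (-1)^k L/3\}$ provides two candidates whose straddling sets remain disjoint under the constraint $\ell(Q)/L \leq 1/3$, so no new argument is needed to treat the two parities separately.
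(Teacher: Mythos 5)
The paper does not present its own proof of Proposition~\ref{prop:shiftdyad}; it cites Lerner~\cite{Lern2012}, where the standard ``one-third trick'' is used. Your proof is exactly that argument, carried out carefully: you pick the unique scale $L = 2^{-k} \in [3\ell(Q), 6\ell(Q))$, reduce to one dimension using the product structure of the half-open cubes in $\D^t$, and observe that for each of the two shifts the set of ``bad'' offsets $\theta$ has length $\ell(Q)/L \le 1/3$ and the two bad sets are disjoint subsets of $[0,1)$, so one of the two shifts always works. Your bookkeeping of the parity sign $(-1)^k$ is correct — for $k$ odd the effective shift is $-L/3 \equiv 2L/3 \pmod L$, whence the bad set moves to $(2/3 - \ell(Q)/L,\, 2/3)$, which is still disjoint from the $t_i=0$ bad set $(1-\ell(Q)/L,\,1)$ precisely because $\ell(Q)/L \le 1/3$. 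The proof is correct and is the same approach as the cited reference.
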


Given a dyadic grid, $\D$, define the dyadic Riesz potential operator
\begin{equation}\label{dyadicfracint}
I_\al^\D f(x)=\sum_{Q\in \D}|Q|^{\frac{\al}{n}}\dashint_Q f(y)\,dy\cdot
\chi_Q(x).
\end{equation}
Dyadic Riesz potentials were first introduced by Sawyer and
Wheeden~\cite{MR1291534} (see also~\cite{MR1175693}).  They proved
(essentially) that the Riesz potential lies in the convex hull of all the dyadic
Riesz potentials.    Here we prove a sharper version of this result. 

\begin{prop} \label{dyadicbound} Given $0<\al<n$ and a non-negative
  function $f$,  then for any dyadic grid $\D$,
$$ I^\D_\al f(x)\lesssim I_\al f(x). $$
Conversely, we have that
\[  I_\alpha f(x)  \lesssim \max_{t\in \{0,1/3\}^n}
I_\al^{\D^t}f(x). \] 
\end{prop}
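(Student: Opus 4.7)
The two inequalities will be proved independently, each via a dyadic decomposition adapted to the point $x$.

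For the first inequality, I would fix $x \in \R^n$ and enumerate the cubes in $\D$ containing $x$ as a nested increasing chain $\{Q_k\}_{k \in \Z}$ with $\ell(Q_k) = 2^k$, so that
$$I_\alpha^\D f(x) = \sum_{k \in \Z} 2^{k(\alpha-n)} \int_{Q_k} f(y)\,dy.$$
Writing $Q_k$ as the disjoint union $\bigsqcup_{j \leq k} (Q_j \setminus Q_{j-1})$, interchanging the order of summation (valid by Tonelli since $f \geq 0$), and using $\alpha < n$ to sum the resulting geometric series, I obtain
$$I_\alpha^\D f(x) \lesssim \sum_{j \in \Z} 2^{j(\alpha-n)} \int_{Q_j \setminus Q_{j-1}} f(y)\,dy.$$
Any $y \in Q_j$ satisfies $|x-y| \leq \sqrt{n}\, 2^j$, so $2^{j(\alpha-n)} \lesssim |x-y|^{\alpha-n}$ on $Q_j \setminus Q_{j-1}$, and adding the resulting integrals gives a quantity bounded by $I_\alpha f(x)$.

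For the converse, I would split the Riesz kernel dyadically in $|x-y|$:
$$I_\alpha f(x) = \sum_{k \in \Z} \int_{2^{k-1} \leq |x-y| < 2^k} \frac{f(y)}{|x-y|^{n-\alpha}}\,dy \lesssim \sum_{k \in \Z} 2^{k(\alpha-n)} \int_{B_k} f(y)\,dy,$$
where $B_k$ is the cube centered at $x$ of side length $2^{k+1}$, which contains $\{|x-y|<2^k\}$. Applying Proposition \ref{prop:shiftdyad} to each $B_k$ supplies $t(k) \in \{0,1/3\}^n$ and a cube $Q^{(k)} \in \D^{t(k)}$ with $B_k \subseteq Q^{(k)}$ and $\ell(Q^{(k)}) \leq 6 \cdot 2^{k+1}$; in particular $x \in Q^{(k)}$ and $|Q^{(k)}|^{\alpha/n} \simeq 2^{k\alpha}$, so
$$I_\alpha f(x) \lesssim \sum_{k \in \Z} |Q^{(k)}|^{\alpha/n} \dashint_{Q^{(k)}} f(y)\,dy \cdot \chi_{Q^{(k)}}(x).$$
Grouping the terms by the value of $t(k)$ and comparing each subsum with the sum defining $I_\alpha^{\D^{t}} f(x)$ yields
$$I_\alpha f(x) \lesssim \sum_{t \in \{0,1/3\}^n} I_\alpha^{\D^t} f(x) \leq 2^n \max_{t} I_\alpha^{\D^t} f(x).$$

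The only subtle point is this last regrouping, since a priori two different indices could produce the same cube $Q^{(k)} \in \D^t$. However, if $Q^{(k)} = Q^{(k')}$ with $k < k'$, then $B_{k'} \subseteq Q^{(k')} = Q^{(k)}$ together with $\ell(Q^{(k)}) \leq 6 \cdot 2^{k+1}$ forces $2^{k'+1} \leq 6 \cdot 2^{k+1}$, i.e.\ $k' - k \leq 2$. Hence each dyadic cube in $\D^t$ is reused for at most three values of $k$, and this bounded overcounting is absorbed into the implicit constant.
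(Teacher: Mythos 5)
Your proof is correct and follows essentially the same approach as the paper: the same annular decomposition $Q_k = \bigsqcup_{j\le k}(Q_j\setminus Q_{j-1})$ of the nested dyadic chain for the first inequality, and the same dyadic split of the kernel combined with Proposition~\ref{prop:shiftdyad} and a bounded-overlap count for the second. The only (minor) difference is stylistic: where the paper splits off a single annulus, reabsorbs the remainder using $2^{\alpha-n}<1$, and rearranges, you carry out the full decomposition up front and swap the order of summation by Tonelli, summing the geometric series $\sum_{k\ge j}2^{k(\alpha-n)}$ directly.
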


Note that as a corollary to Proposition~\ref{dyadicbound} we have that
$I_\alpha f$ is pointwise equivalent to a linear combination of dyadic
Riesz potentials:
\[ I_\alpha f(x) \simeq \sum_{t\in \{0,1/3\}^n} I^{\D^t}_\alpha
f(x). \]

\begin{proof}
  Fix a non-negative function $f$, $x\in \R^n$, and a dyadic grid
  $\D$.  Let $\{Q_k\}_{k\in \Z}$ be the unique sequence of dyadic
  cubes in $\D$ such that $\ell(Q_k)=2^k$ and $x\in Q_k$.  Fix $N\geq 1$;  then
\begin{align*}
& \sum_{\substack{Q\in \D\\ \ell(Q)\leq 2^N}}
  \frac{1}{|Q|^{1-\frac{\al}{n}}}\int_Q f(y)\,dy\cdot \chi_Q(x) \\
& \qquad \qquad =\sum_{k=-\infty}^N\frac{1}{|Q_k|^{1-\frac{\al}{n}}}\int_{Q_k} f(y)\,dy\\
& \qquad \qquad =\sum_{k=-\infty}^N\frac{1}{|Q_k|^{1-\frac{\al}{n}}}
\int_{Q_k\backslash Q_{k-1}} f(y)\,dy
+\sum_{k=-\infty}^N\frac{1}{|Q_k|^{1-\frac{\al}{n}}}\int_{Q_{k-1}} f(y)\,dy\\
& \qquad \qquad \lesssim \sum_{k=-\infty}^N
\int_{Q_k\backslash Q_{k-1}}\frac{f(y)}{|x-y|^{n-\al}}\,dy
+2^{\al-n}\sum_{\stackrel{Q\in \D}{\ell(Q)\leq 2^N}} 
\frac{1}{|Q|^{1-\frac{\al}{n}}}\int_Q f(y)\,dy\cdot \chi_Q(x)\\
& \qquad \qquad =\int_{Q_N} \frac{f(y)}{|x-y|^{n-\al}} \,dy
+ 2^{\al-n}\sum_{\substack{Q\in \D\\ \ell(Q)\leq 2^N}} 
\frac{1}{|Q|^{1-\frac{\al}{n}}}\int_Q f(y)\,dy\cdot \chi_Q(x).
\end{align*}
Since $\alpha<n$ we can rearrange terms  and take the limit as $N\ra
\infty$ to get
$$I^{\D}_\al f(x)\lesssim I_\al f(x).$$

To prove the second inequality, let $Q(x,r)$ be the cube of
side-length $2r$ centered at $x$.  By standard estimates (see, for
example~\cite{MR2652182}),
\begin{align*}
I_\al f(x)&\leq 2^{n-\al}\sum_{k\in \Z} (2^{-k})^{n-\al}\int_{Q(x,2^k)} f(y)\,dy.
\end{align*}
By Proposition \ref{prop:shiftdyad}, for each $k\in \Z$ there exists
$t\in \{0,1/3\}^n$ and $Q_t\in \D^t$ such that $Q(x,2^k)\subset Q_t$
and
\[  2^{k+1}=\ell(Q)\leq \ell(Q_t)\leq 6\ell(Q(x,2^k))=12\cdot 2^k.\]
Since $\ell(Q_t)=2^j$ for some $j$, we must have that  $2^{k+1}\leq
\ell(Q_t)\leq 2^{k+3}$.  Hence, 
\begin{align*}
I_\al f(x)&\leq 2^{n-\al}\sum_{k\in \Z} (2^{-k})^{n-\al}\int_{Q(x,2^k)} f(y)\,dy \\
&\lesssim \sum_{k\in \Z} \sum_{t\in \{0,1/3\}^n} \sum_{\substack{Q\in \D^t\\ 2^{k+1}\leq \ell(Q)\leq 2^{k+3}}}  \frac{1}{|Q|^{1-\frac{\al}{n}}}\int_Q f(y)\,dy\cdot \chi_Q(x)\\
&\lesssim \sum_{t\in \{0,1/3\}^n}  \sum_{Q\in \D^t}  
\frac{1}{|Q|^{1-\frac{\al}{n}}}\int_Q f(y)\,dy\cdot \chi_Q(x) \\
& \lesssim \max_{t\in \{0,1/3\}^n} I_\al^{\D^t}f(x). 
\end{align*}
\end{proof}

We now show that in the definition of $I_\alpha^\D$ we can replace the
summation over $\D$ by a summation over a subset of the dyadic grid
whose members have good intersection properties.  We call such a subset a {\it
  sparse family} (cf.~\cite{HyNaz2012,Lern2012}).  Given a dyadic grid
$\D$, a subset $\Sp\subseteq \D$  is a sparse family
of dyadic cubes if for every $Q\in \Sp$,
\begin{equation}\label{eqn:sparse}
\Big|\bigcup_{\substack{Q'\in \Sp\\ Q'\subsetneq Q}} Q'\Big|\leq
\frac{1}{2}|Q|.
\end{equation}
 If $\Sp$ is a sparse family and we define the sets
$$E(Q)=Q\backslash \Big(\bigcup_{\substack{Q'\in \Sp \\ Q'\subsetneq Q}}
Q'\Big),  \qquad Q\in \Sp,$$
then the collection $\{E(Q)\}_{Q\in \Sp}$ is pairwise disjoint and for
each $Q$,
\begin{equation}\label{eqn:equivmeasure}
|E(Q)|\leq |Q|\leq 2|E(Q)|.
\end{equation}
Though the terminology is recent, particular sparse families have long
played a role in the applications of  Calder\'on-Zygmund theory.   See,
for example,~\cite[Chapter~4, Lemma~2.5]{garcia-cuerva-rubiodefrancia85} or~\cite[Appendix~A]{MR2797562}.

Given $\alpha$, $0<\alpha<n$, and a sparse family $\Sp\subseteq \D$,
define the sparse dyadic Riesz potential
$$I^\Sp_\al f(x)=
\sum_{Q\in \Sp} |Q|^{\frac{\al}{n}}\dashint_Q f\,dy\cdot \chi_Q(x).$$
The connection between dyadic Riesz potentials and their sparse
counterparts is given by the following result.  The ideas underlying
the proof are not new:  they are implicit in
\cite{MR2652182,MR1291534,MR1175693}.

\begin{prop} \label{prop:sparsefrac} 
Given a bounded, non-negative function $f$ with compact
  support and a dyadic grid $\D$,  there exists a sparse family $\Sp$ such
  that for all $\alpha$, $0<\alpha<n$,
$$I^\D_\al f(x)\lesssim I^\Sp_\al f(x).$$ 
\end{prop}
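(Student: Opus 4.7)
The plan is to construct $\Sp$ by a Calder\'on--Zygmund stopping time adapted to the dyadic level sets of the averages of $f$, and then to recover the pointwise estimate by grouping cubes of $\D$ according to their nearest $\Sp$-ancestor and summing a convergent geometric series in the cube sidelengths. Fix $C_0=2^{n+1}$, and for each $k\in\Z$ let $\mathcal{F}_k$ be the collection of maximal $Q\in\D$ satisfying $\dashint_Q f\,dy>C_0^k$; since $f\in L^1$ has compact support, $\dashint_Q f\,dy\to0$ as $\ell(Q)\to\infty$, so $\mathcal{F}_k$ is well defined. Set $\Sp=\bigcup_{k\in\Z}\mathcal{F}_k$; note that $\Sp$ depends only on $f$ and $\D$, not on $\al$.

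I would first verify sparseness. Fix $Q\in\mathcal{F}_k$; by maximality its dyadic parent $\widehat Q$ satisfies $\dashint_{\widehat Q}f\,dy\leq C_0^k$, and since $|\widehat Q|=2^n|Q|$ this forces $\dashint_Q f\,dy\leq 2^n C_0^k=C_0^{k+1}/2$. Any $Q'\in\Sp$ with $Q'\subsetneq Q$ lies in some $\mathcal{F}_{k'}$ with $k'>k$, because if $k'\leq k$ then $Q$ itself would be a strictly larger cube with $\dashint_Q f\,dy>C_0^k\geq C_0^{k'}$, contradicting the maximality of $Q'$ in $\mathcal{F}_{k'}$; in particular $\dashint_{Q'}f\,dy>C_0^{k+1}$. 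Hence $\bigcup_{Q'\in\Sp,\,Q'\subsetneq Q}Q'$ is contained in $\{x\in Q:M_\D^Q f(x)>C_0^{k+1}\}$, where $M_\D^Q$ is the dyadic maximal operator restricted to subcubes of $Q$, and the weak-type $(1,1)$ inequality yields
\[
\Big|\bigcup_{Q'\in\Sp,\,Q'\subsetneq Q}Q'\Big|\leq \frac{|Q|\,\dashint_Q f\,dy}{C_0^{k+1}}\leq \frac{|Q|}{2}.
\]

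For the pointwise bound, for each $R\in\D$ with $\dashint_R f\,dy>0$ I let $k(R)$ be the integer with $C_0^{k(R)}<\dashint_R f\,dy\leq C_0^{k(R)+1}$ and define $\pi(R)$ to be the unique cube in $\mathcal{F}_{k(R)}$ containing $R$, which exists because the averages of the dyadic ancestors of $R$ tend to zero. By construction $\dashint_R f\,dy\leq C_0\dashint_{\pi(R)}f\,dy$. Grouping the sum defining $I_\al^\D f(x)$ by $\pi(R)=Q\in\Sp$ and using that for each $x\in Q$ there is exactly one dyadic cube $R\subseteq Q$ containing $x$ at each scale $2^{-j}\ell(Q)$, $j\geq 0$, gives
\[
I_\al^\D f(x)\leq C_0\sum_{\substack{Q\in\Sp\\ x\in Q}}\dashint_Q f\,dy\sum_{j\geq 0}2^{-j\al}|Q|^{\al/n}=\frac{C_0}{1-2^{-\al}}\,I_\al^\Sp f(x).
\]
The main obstacle is the choice of stopping constant: a naive threshold of $2$ would produce a sparseness bound of order $2^n|Q|$ rather than $|Q|/2$, because the average on the parent can be $2^n$ times smaller than on the cube, so the weak-type step alone is too weak. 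Taking $C_0=2^{n+1}$ precisely absorbs this doubling factor and lets weak-type $(1,1)$ close the sparseness estimate at $1/2$, while the finiteness of $\sum_{j\geq 0}2^{-j\al}$ for $\al>0$ delivers the pointwise comparison.
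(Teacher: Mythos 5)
Your proof is correct and follows essentially the same route as the paper's: both use the stopping threshold $2^{n+1}$, take $\Sp$ to be the union over $k$ of the maximal dyadic cubes with average exceeding $(2^{n+1})^k$, and recover the pointwise bound by summing the geometric series $\sum_{j\geq 0}2^{-j\al}$ over subcubes of a stopping cube. The only cosmetic difference is that you verify sparseness via the weak-type $(1,1)$ bound for the localized dyadic maximal function, whereas the paper writes out the same computation directly from the stopping condition.
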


\begin{proof} Let $a=2^{n+1}$.  For each $k\in \Z$ define 
$$\Q^k=\Big\{P\in \D:a^k<\avgint_P f\,dy\leq a^{k+1}\Big\}.$$
Then for every $P\in \D$ such that $\avgint_P f\,dy \neq 0$, there
exists a unique $k$ such that $P\in \Q^k$.  Therefore,
\begin{multline*}
I^\D_\al f(x)=\sum_{P\in \D}  \frac{1}{|P|^{1-\frac{\al}{n}}}\int_{P} f\,dy\cdot \chi_{P}(x) \\
=\sum_k \sum_{P\in \Q^k} \frac{1}{|P|^{1-\frac{\al}{n}}}\int_{P} f\,dy\cdot \chi_{P}(x) 
\leq \sum_k a^{k+1} \sum_{P\in \Q^k}{|P|^{\frac{\al}{n}}}\cdot \chi_{P}(x).
\end{multline*}
Now let $\Sp_k$ be the collection of disjoint, maximal cubes $Q\in \D$ such that
$$\dashint_Q f\,dx>a^k.$$
(Such a collection exists since $\D$ is a dyadic grid and $f$ is
bounded and has compact support.)  Let $\Sp=\bigcup_k \Sp_k$.  Then
for every $P\in \Q^k$ there exists $Q\in \Sp_k$ such that $Q\supseteq
P$.  Hence, we have that
$$ I^\D_\al f(x)\leq a\sum_{k} a^k\sum_{Q\in \Sp_k} 
\sum_{\substack{P\in \D \\ P\subseteq Q}} |P|^{\frac{\al}{n}} \cdot \chi_P(x).$$
The inner sum can be evaluated:
\[ 
\sum_{\substack{P\in \D \\ P\subseteq Q}} |P|^{\frac{\al}{n}}
\cdot\chi_P(x)
=\sum^\infty_{r=0}\sum_{\substack{P\in \D:P\subset
    Q \\ \ell(P)=2^{-r}\ell(Q)}}
|P|^{\frac{\al}{n}}\cdot\chi_P(x)
=\frac{1}{1-2^{-\al}}|Q|^{\frac{\al}{n}}\cdot\chi_{Q}(x). 
\]
Moreover, since $a^k<\avgint_{Q} f\,dy$ if $Q\in \Sp_k$,  we have that
$$I^\D_\al f(x)\lesssim I_\al^\Sp f(x).$$  

Finally, we show that $\Sp$ is sparse.  If $Q\in \Sp$, then $Q\in \Sp_k$
for some $k\in \Z$; hence, by the maximality of the cubes in $\Sp$, 
\[ 
\Big|\bigcup_{\substack{Q'\in \Sp\\ Q'\subsetneq Q}}Q'\Big|
=\sum_{\substack{Q'\in S^{k+1}\\ Q'\subseteq Q}}|Q'|<
\frac{1}{a^k}\sum_{\substack{Q'\in \Sp_{k+1} \\ Q'\subseteq Q}}\int_{Q'} f\,dx
\leq \frac{1}{a^k}\int_Qf\,dx\leq \frac{2^n}{a}|Q|=\frac12|Q|.
\]
\end{proof}

As a consequence of Propositions~\ref{dyadicbound}
and~\ref{prop:sparsefrac}, to prove our main results it will suffice
to work with a general dyadic grid $\D$ and a sparse Riesz potential
$I^\Sp_\al$.  To prove bounds for $I_\al^\Sp$ we
will use a dyadic version of Theorem~\ref{thm:sawyer} due to Lacey,
Sawyer, and Uriarte-Tuero \cite{LacSawUT2010} that gives
precise bounds in terms of testing conditions.    To state their
result, we need a definition.  Given a dyadic grid $\D$ and $R\in \D$,
let
$$I_\al^{\Sp(R)} f(x)=\sum_{\substack{Q\in \Sp \\ Q\subseteq R}} 
|Q|^{\frac{\al}{n}}\dashint_Q f\,dx\cdot\chi_Q(x).$$
For $1<p\leq q<\infty$ and a pair of weights $(u,\sigma)$ define
$$[u,\sigma]^\D_{(I^\Sp_{\al})^{p,q}} = \sup_{R\in \D} 
\sigma(R)^{-1/p}\bigg(\,\int_R I^{\Sp(R)}_\al(\chi_R\sigma)^qu\,dx\bigg)^{1/q}.$$

\begin{prop} \label{prop:lsut}
Fix $\alpha$, $0< \al<n$,  and $p,\,q$, $1<p\leq q<\infty$.  Let  $\D$
be a dyadic grid and let $\Sp$ be a sparse subset of $\D$.  Given any
pair of weights $(u,\sigma)$,  the following equivalences hold:
\begin{gather*}\label{eqn:sparseweak} 
\|I_\al^\Sp(\,\cdot\,\sigma)\|_{L^p(\sigma)\ra L^{q,\infty}(u)}
\simeq [\sigma,u]^\D_{(I_\al^\Sp)^{q',p'}} \\
\label{eqn:sparsestrong} 
\|I_\al^\Sp(\,\cdot\,\sigma)\|_{L^p(\sigma)\ra L^{q}(u)}
\simeq[u,\sigma]^\D_{(I^\Sp_\al)^{p,q}}+ [\sigma,u]^\D_{(I_\al^\Sp)^{q',p'}}.
\end{gather*}
\end{prop}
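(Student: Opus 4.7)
The plan is to treat this as a sparse-dyadic analog of Sawyer's two-weight testing theorem. Necessity of both testing constants is essentially immediate from positivity; sufficiency rests on a principal cubes (parallel stopping time) argument.

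For necessity in the strong-type case, I would fix $R \in \D$ and test the hypothesized bound on $f = \chi_R$. Since $I_\al^\Sp \geq 0$ and every $Q \in \Sp(R)$ satisfies $Q \subseteq R$, one has $I_\al^{\Sp(R)}(\chi_R \sigma)(x) \leq I_\al^\Sp(\chi_R\sigma)(x)$ on $R$, so
$$\Big(\int_R I_\al^{\Sp(R)}(\chi_R\sigma)^q u\,dx\Big)^{1/q} \leq \|I_\al^\Sp(\chi_R \sigma)\|_{L^q(u)} \lesssim \sigma(R)^{1/p},$$
giving $[u,\sigma]^\D_{(I^\Sp_\al)^{p,q}} \lesssim \|I_\al^\Sp(\cdot\,\sigma)\|_{L^p(\sigma) \to L^q(u)}$. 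Because $I_\al^\Sp$ is self-adjoint with respect to Lebesgue measure, the strong $(p,q)$ bound with weights $(u,\sigma)$ is dual to the strong $(q',p')$ bound with weights $(\sigma,u)$, and the analogous test gives the necessary bound on $[\sigma,u]^\D_{(I_\al^\Sp)^{q',p'}}$. For the weak-type direction I would invoke the standard principle that, for positive operators, an $L^p(\sigma)\to L^{q,\infty}(u)$ bound is equivalent by duality to the dual strong testing on characteristic functions, which for the self-adjoint $I_\al^\Sp$ is exactly the $[\sigma,u]^\D_{(I_\al^\Sp)^{q',p'}}$ condition.

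For sufficiency in the strong type I would expand by duality against a nonneg $g \in L^{q'}(u)$:
$$\int I_\al^\Sp(f\sigma)\, g u\,dx = \sum_{Q\in \Sp} |Q|^{\al/n - 1} \Big(\int_Q f\sigma\Big)\Big(\int_Q gu\Big),$$
and introduce two families of principal cubes $\Ps_f, \Ps_g \subseteq \Sp$, where $Q$ is principal for $f$ (resp.\ $g$) if the weighted average of $f$ (resp.\ $g$) on $Q$ essentially doubles relative to its principal ancestor. Each $Q$ has unique principals $\pi_f$ and $\pi_g$; since dyadic principals are nested, the sum splits cleanly into the two regimes $\pi_f \subseteq \pi_g$ and $\pi_g \subseteq \pi_f$. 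On the first regime one replaces $\int_Q f\sigma$ by its controlled upper bound on $\pi_f$ and recognizes the resulting inner sum as $\int_{\pi_g} I_\al^{\Sp(\pi_g)}(\chi_{\pi_g}\sigma)\, gu$, which by H\"older and the $[u,\sigma]^\D_{(I^\Sp_\al)^{p,q}}$ condition is bounded by the desired testing factor. The symmetric regime is handled by $[\sigma,u]^\D_{(I_\al^\Sp)^{q',p'}}$. For weak-type sufficiency I would appeal directly to the Sawyer duality for positive operators: the $L^p(\sigma)\to L^{q,\infty}(u)$ bound is equivalent to the dual strong testing, and localizing from $I_\al^\Sp(\chi_R u)$ to $I_\al^{\Sp(R)}(\chi_R u)$ costs only the geometric tail $\sum_{Q\supsetneq R,\,Q\in\Sp}|Q|^{\al/n-1}u(R)\lesssim |R|^{\al/n-1}u(R)$, which is itself dominated by the local testing.

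The main obstacle is the Carleson-type summation over principal cubes that appears after the stopping decomposition: one must show that $\sum_{\pi \in \Ps_f} F_\pi^{\,p}\, \sigma(\pi) \lesssim \|f\|_{L^p(\sigma)}^p$, where $F_\pi$ denotes the $\sigma$-average of $f$ on $\pi$, together with the analogous bound for $g$. These follow from the weighted Carleson embedding theorem for sparse families, exploiting \eqref{eqn:equivmeasure} to replace sparse cubes by pairwise disjoint sets of comparable measure and then dominating by the dyadic maximal function in $L^p(\sigma)$ (and $L^{q'}(u)$). This is precisely the route followed by Lacey--Sawyer--Uriarte-Tuero \cite{LacSawUT2010}, whose result the proposition invokes.
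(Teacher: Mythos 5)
The paper does not actually prove Proposition \ref{prop:lsut}: it is stated as a citation of the two-weight testing theorem of Lacey, Sawyer and Uriarte-Tuero \cite{LacSawUT2010}, specialized to the sparse operator $I_\al^\Sp$, and no proof is given in the present paper. Your sketch is therefore not being compared against an in-paper argument but against the cited source, which you yourself identify at the end. As an outline of that source's proof it is essentially accurate: necessity by testing on indicators and using the self-adjointness of $I_\al^\Sp$; sufficiency by duality, a parallel stopping-time (principal cubes) decomposition for $f$ in $L^p(\sigma)$ and $g$ in $L^{q'}(u)$, splitting according to which principal ancestor is smaller, recognizing the localized testing operator in each regime, and finishing with a Carleson embedding over the stopping cubes using the sparseness property \eqref{eqn:equivmeasure}. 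One small point worth tightening if you wrote this out in full: in the weak-type sufficiency step, the claim that the global-to-local gap $\sum_{Q\supsetneq R,\,Q\in\Sp}|Q|^{\al/n-1}u(R)$ is ``dominated by the local testing'' needs an intermediate observation, namely that the testing conditions for a positive operator already imply the two-weight $A^\al_{p,q}$ condition $\sup_Q |Q|^{\al/n+1/q-1/p}(\avgint_Q u)^{1/q}(\avgint_Q\sigma)^{1/p'}<\infty$, and it is this $A^\al_{p,q}$ bound, together with the geometric decay in $|Q|^{\al/n-1}$, that controls the tail. With that caveat, your plan matches the argument in \cite{LacSawUT2010}.
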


\section{Proof of Theorem~\ref{thm:mixedfrac}} \label{sec3}

Our main result in this section is the following.  

\begin{theorem} \label{thm:maintest} 
  Given $\alpha$, $0< \al<n$, and $p$, $1<p<n/\al$, define $q$ by
  \eqref{eqn:sobolev}.  Suppose $(u,\sigma)$ is a pair of weights with
  $[u,\sigma]_{A_{s(p)}}<\infty$, $\D$ is a dyadic grid with sparse
  subset $\Sp$.  If $u\in A_\infty$, then
\begin{equation}\label{eqn:mainest2sup} 
[\sigma,u]^\D_{(I^\Sp_\al)^{q',p'}}\lesssim
[u,\sigma]_{A_{s(p)}}^{\frac1q}[u]_{A_\infty'}^{\frac{1}{p'}}. \end{equation}
The constant in \eqref{eqn:mainest2sup}
is independent of $\D$ and $\Sp$.
\end{theorem}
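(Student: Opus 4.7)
The plan is to dualize, apply the $A_{s(p)}$ condition pointwise on each cube, and then handle the resulting sum via a corona (principal-cube) decomposition together with the Hyt\"onen--P\'erez $A_\infty'$ lemma and a Carleson embedding.

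By $L^{p'}(\sigma)$--$L^p(\sigma)$ duality, the required testing bound reduces to showing that for every nonnegative $g$ with $\|g\|_{L^p(\sigma)}\le 1$,
$$\int_R I^{\Sp(R)}_\al(\chi_R u)\,g\sigma\,dx = \sum_{Q\in \Sp(R)}|Q|^{\alpha/n}\langle u\rangle_Q\,\sigma(Q)\langle g\rangle^\sigma_Q \lesssim [u,\sigma]_{A_{s(p)}}^{1/q}[u]_{A_\infty'}^{1/p'}u(R)^{1/q'}.$$
The two-weight condition $\langle u\rangle_Q\langle\sigma\rangle_Q^{q/p'}\le [u,\sigma]_{A_{s(p)}}$, combined with the Sobolev identity $\alpha/n=1/p-1/q$, yields the pointwise-on-cubes estimate
$$|Q|^{\alpha/n}\langle u\rangle_Q\,\sigma(Q)\langle g\rangle^\sigma_Q\le [u,\sigma]_{A_{s(p)}}^{1/q}\,u(Q)^{1/q'}\sigma(Q)^{1/p}\langle g\rangle^\sigma_Q,$$
so it suffices to bound $S := \sum_{Q\in \Sp(R)}u(Q)^{1/q'}\sigma(Q)^{1/p}\langle g\rangle^\sigma_Q$.

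I next build a corona family $\mathcal{F}\subseteq \Sp(R)$ by a stopping-time argument: $R\in \mathcal{F}$, and for each $P\in \mathcal{F}$ I adjoin the maximal cubes $P'\in\Sp(R)$ with $P'\subsetneq P$ at which either $\langle u\rangle_{P'}>C\langle u\rangle_P$ or $\langle g\rangle^\sigma_{P'}>C\langle g\rangle^\sigma_P$, where $C$ is chosen large enough (via the dyadic weak $(1,1)$ inequality) that $\mathcal{F}$ is sparse in Lebesgue measure. Writing $\pi(Q)$ for the minimal $P\in \mathcal{F}$ containing $Q\in\Sp(R)$, both averages $\langle u\rangle_Q$ and $\langle g\rangle^\sigma_Q$ are then controlled by a constant times their values at $\pi(Q)$. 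Using these bounds together with the sparsity estimate $\sum_{Q\in\Sp,\,Q\subseteq P}|Q|^{\alpha/n}\chi_Q\lesssim |P|^{\alpha/n}\chi_P$, the sum over $\Sp(R)$ collapses to a sum over the corona family and gives
$$S\lesssim \sum_{P\in \mathcal{F}(R)}u(P)^{1/q'}\sigma(P)^{1/p}\langle g\rangle^\sigma_P.$$

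The final step is H\"older's inequality with exponents $(p',p)$ over the corona family:
$$S\lesssim \Big(\sum_{P\in\mathcal{F}(R)}u(P)^{p'/q'}\Big)^{1/p'}\Big(\sum_{P\in\mathcal{F}(R)}\sigma(P)(\langle g\rangle^\sigma_P)^p\Big)^{1/p}.$$
Because $p'/q'\ge 1$ (as $p\le q$) and $u(P)\le u(R)$, I have $u(P)^{p'/q'}\le u(R)^{p'/q'-1}u(P)$; invoking the key $A_\infty'$ lemma for sparse families, $\sum_{P\in \mathcal{F}(R)}u(P)\lesssim [u]_{A_\infty'}u(R)$, then gives the $u$-side bound $[u]_{A_\infty'}^{1/p'}u(R)^{1/q'}$ (with the exponent $1/p'$ arising precisely from this Hölder split). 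For the $g$-side, the $g$-doubling part of the corona stopping supplies a telescoping that, together with the disjointness of the sets $E^{\mathcal{F}}(P):=P\setminus\bigcup_{P'\in\mathcal{F},\,P'\subsetneq P}P'$ and the pointwise domination $\langle g\rangle^\sigma_P\le M^\sigma g(x)$ for $x\in P$, yields the Carleson embedding $\sum_{P\in\mathcal{F}(R)}\sigma(P)(\langle g\rangle^\sigma_P)^p\lesssim \|g\|_{L^p(\sigma)}^p$. The principal obstacle I anticipate is reconciling the two stopping conditions so that both sums close simultaneously with the claimed exponents: the $\mathcal{F}$-children triggered only by the $u$-condition do not admit the $g$-telescope directly, so they must be absorbed by a separate geometric-series argument using the Lebesgue sparsity of $\mathcal{F}$ and the trivial monotonicity $\sigma(P')\le \sigma(P)$.
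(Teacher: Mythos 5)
Your overall strategy---dualize, apply the two-weight $A_{s(p)}$ condition on cubes, run a corona decomposition, and invoke the Fujii--Wilson $A_\infty'$ estimate---is indeed the right circle of ideas, and several pieces are correct: the pointwise cube estimate $|Q|^{\alpha/n}\langle u\rangle_Q\sigma(Q)\le[u,\sigma]_{A_{s(p)}}^{1/q}u(Q)^{1/q'}\sigma(Q)^{1/p}$ follows exactly as you say from the Sobolev relation; the corona family $\mathcal{F}\subset\Sp$ inherits Lebesgue sparsity from $\Sp$; and the $u$-side estimate $\sum_{P\in\mathcal{F}}u(P)\lesssim[u]_{A_\infty'}u(R)$ combined with $u(P)^{p'/q'}\le u(R)^{p'/q'-1}u(P)$ is precisely what produces the $[u]_{A_\infty'}^{1/p'}u(R)^{1/q'}$ factor.

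However, the $g$-side does not close, and this is a genuine gap, not merely a technicality to absorb. After your H\"older split you need $\sum_{P\in\mathcal{F}}\sigma(P)(\langle g\rangle^\sigma_P)^p\lesssim\|g\|_{L^p(\sigma)}^p$, which by the $\sigma$-Carleson embedding theorem requires $\{\sigma(P)\}_{P\in\mathcal{F}}$ to be a $\sigma$-Carleson sequence, i.e.\ $\sum_{P\in\mathcal{F},\,P\subseteq Q}\sigma(P)\lesssim\sigma(Q)$. The children you adjoin via the $g$-stopping rule are $\sigma$-sparse (by the dyadic maximal inequality in $\sigma$-measure), but the children adjoined \emph{only} via the $u$-stopping rule are merely Lebesgue-sparse. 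Since the theorem does not assume $\sigma\in A_\infty$, Lebesgue sparsity gives no control on $\sigma(P')$: a chain of nested type-$u$ children $P_0\supset P_1\supset\cdots$ can have $\sigma(P_i)\approx\sigma(P_0)$ for all $i$, so the sum $\sum_i\sigma(P_i)(\langle g\rangle^\sigma_{P_i})^p$ grows linearly with the chain length and the Carleson condition fails. Your proposed fix (``a separate geometric-series argument using the Lebesgue sparsity of $\mathcal{F}$ and the trivial monotonicity $\sigma(P')\le\sigma(P)$'') is exactly the piece that is missing, and I don't see how it can work: monotonicity and Lebesgue sparsity simply do not force $\sigma$-decay.

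The paper's proof is structured precisely to avoid ever needing a $\sigma$-Carleson condition. It does not dualize: it works directly with $\big(\int_R(I^{\Sp(R)}_\alpha u)^{p'}\sigma\,dx\big)^{1/p'}$. It first slices $\Sp(R)$ into level sets $\Q^a$ of the local quantity $(\avgint_Q u)^{1/q}(\avgint_Q\sigma)^{1/p'}$, then performs a corona decomposition within each $\Q^a$ by stopping on the \emph{fractional} average $|Q|^{\alpha/n}\avgint_Q u$, and further slices into sets $\Q^a_b(P)$ where this fractional average is frozen. The key point is that on $\Q^a_b(P)$ one has $\sigma(Q)\simeq c_{a,b,P}\,|Q|^{1+\frac{\alpha}{n}\frac{p'}{q}}$ with an exponent $\ge 1$; this converts the Lebesgue exponential decay available from sparsity (Lemma 3.4) into $\sigma$-exponential decay of the overlap sets $F^a_b(k,P)$, with \emph{no} $A_\infty$ hypothesis on $\sigma$. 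Finally the $A_{s(p)}$ condition is used to trade $\sigma(P)$ for $u(P)$, and everything is absorbed by a $u$-Carleson embedding (Lemma 3.3) and the boundedness of the fractional $u$-maximal operator $M^\D_{\alpha,u}:L^{q'}(u)\to L^{p'}(u)$. This three-parameter $(a,b,k)$ decomposition is the ingredient your proposal lacks; without it the $\sigma$-side cannot be controlled.
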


The operator $I_\al^\Sp$ is self adjoint; hence, by symmetry we also have the dual testing condition
$$[u,\sigma]^\D_{(I_\al^\Sp)^{p,q}}\lesssim
[\sigma,u]_{A_{s(q')}}^{\frac{1}{p'}}[\sigma]_{A_\infty'}^{\frac{1}{q}}$$
provided $\sigma\in A_\infty$.
By Propositions~\ref{prop:shiftdyad}, \ref{prop:sparsefrac}
and~\ref{prop:lsut}, Theorem~\ref{thm:mixedfrac} follows at once from Theorem~\ref{thm:maintest}.

\smallskip

The proof of Theorem~\ref{thm:maintest} requires three lemmas.  To
state the first we define the fractional maximal operator with respect to
a dyadic grid $\D$.   Given $\alpha$, $0<\alpha<n$, and a non-negative
measure $\mu$ on $\R^n$ define
\[ M^\D_{\al,\mu}f(x)
=\sup_{Q\in \D} \frac{1}{\mu(Q)^{1-\frac{\al}{n}}}\int_Q |f|\,d\mu
\cdot \chi_Q(x).  \]

\begin{lemma}\label{weightedmax} 
Given $\alpha$, $0< \al<n$, and $p$, $1<p<n/\al$, define $q$ by
  \eqref{eqn:sobolev}.  If  the measure $\mu$ is such that $\mu(\R^n)=\infty$, then
$M^\D_{\al,\mu}:L^p(\mu)\ra L^q(\mu)$.  If $p=1$, then 
$M^\D_{\al,\mu}:L^1(\mu)\ra L^{q,\infty}(\mu)$.
\end{lemma}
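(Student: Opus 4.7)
The strategy is to reduce both claims to boundedness properties of the ordinary dyadic maximal operator with respect to $\mu$,
$$M^\D_\mu f(x) = \sup_{Q\in\D} \frac{1}{\mu(Q)}\int_Q |f|\,d\mu \cdot \chi_Q(x),$$
which satisfies the weak-type $(1,1)$ inequality and is bounded on $L^p(\mu)$ for every $1<p\leq\infty$, with constants depending only on $n$ and $p$. These facts follow from the standard Calder\'on--Zygmund stopping-time argument applied to dyadic cubes; it is here that the hypothesis $\mu(\R^n)=\infty$ plays a role, guaranteeing that for each $\lambda>0$ every dyadic cube on which the $\mu$-average of $|f|$ exceeds $\lambda$ is contained in a maximal such cube, so that the level set $\{M^\D_\mu f>\lambda\}$ decomposes into disjoint dyadic pieces.

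The main step is to establish the Hedberg-type pointwise inequality
$$M^\D_{\al,\mu} f(x) \lesssim \bigl(M^\D_\mu f(x)\bigr)^{p/q}\, \|f\|_{L^p(\mu)}^{1-p/q}.$$
For any $Q\in\D$ containing $x$, I would factor
$\int_Q |f|\,d\mu = \bigl(\int_Q|f|\,d\mu\bigr)^{p/q}\bigl(\int_Q|f|\,d\mu\bigr)^{1-p/q}$,
bound the first factor by $\mu(Q)^{p/q}\bigl(M^\D_\mu f(x)\bigr)^{p/q}$, and apply H\"older's inequality (or the trivial bound when $p=1$) to control the second factor by $\mu(Q)^{(1-p/q)/p'}\|f\|_{L^p(\mu)}^{1-p/q}$. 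The resulting exponent on $\mu(Q)$ is
$-1+\al/n+p/q+(1-p/q)/p'$, which a short computation using the Sobolev relation $\al/n=1/p-1/q$ shows equals zero. Taking the supremum over $Q\ni x$ yields the stated pointwise bound.

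With the pointwise inequality in hand both conclusions are immediate. For the strong type, raise to the $q$-th power, integrate against $\mu$, and use $\|M^\D_\mu f\|_{L^p(\mu)}\lesssim \|f\|_{L^p(\mu)}$:
$$\|M^\D_{\al,\mu} f\|_{L^q(\mu)}^q \lesssim \|f\|_{L^p(\mu)}^{q-p}\,\|M^\D_\mu f\|_{L^p(\mu)}^p \lesssim \|f\|_{L^p(\mu)}^q.$$
For the weak type at $p=1$, substitute $p=1$ into the pointwise inequality; then $M^\D_{\al,\mu}f(x)>\lambda$ forces $M^\D_\mu f(x)\gtrsim \lambda^q/\|f\|_{L^1(\mu)}^{q-1}$, and the weak $(1,1)$ estimate for $M^\D_\mu$ gives the desired weak $(1,q)$ bound for $M^\D_{\al,\mu}$. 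I do not anticipate any serious obstacle: the only points requiring care are the exponent bookkeeping in the Hedberg-type inequality and the verification that the standing hypothesis $\mu(\R^n)=\infty$ secures the maximal-cube argument underlying the mapping properties of $M^\D_\mu$.
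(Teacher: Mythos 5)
The paper does not give a proof of this lemma; it only points to \cite{stein93} for $\alpha=0$ and \cite{MR2534183} for $0<\alpha<n$, so there is nothing to compare against directly. Your Hedberg-type argument is a correct and indeed standard way to prove it. The exponent bookkeeping checks out: writing
$\mu(Q)^{-(1-\alpha/n)}\int_Q|f|\,d\mu \le \mu(Q)^{e}\,(M^\D_\mu f(x))^{p/q}\|f\|_{L^p(\mu)}^{1-p/q}$
with
$e = -(1-\tfrac{\alpha}{n})+\tfrac{p}{q}+\tfrac{1}{p'}\bigl(1-\tfrac{p}{q}\bigr)$,
the Sobolev relation $\alpha/n=1/p-1/q$ gives $e=0$, and the $p=1$ endpoint works identically without the H\"older step. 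The rest is the boundedness of the dyadic maximal operator $M^\D_\mu$ on $L^p(\mu)$ ($1<p\le\infty$) and its weak $(1,1)$ bound, both of which hold for arbitrary locally finite $\mu$ with dimension-free constants by the usual stopping-time / martingale argument. One small remark: the hypothesis $\mu(\R^n)=\infty$ is in fact not needed for this lemma as you have argued it --- when a chain of dyadic cubes with large $\mu$-average has no maximal element, the union of the chain still has $\mu$-measure at most $\|f\|_{L^1(\mu)}/\lambda$, so the weak $(1,1)$ bound for $M^\D_\mu$ survives; the authors include the assumption because it is the setting needed in Lemma~\ref{Carleson} and in the application where $\mu=u\,dx$ with $u\in A_\infty$. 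An equally short alternative is to prove the weak $(1,q)$ bound directly from the Calder\'on--Zygmund stopping cubes for $M^\D_{\al,\mu}$ itself (using $\mu(Q_j)<(\lambda^{-1}\int_{Q_j}|f|\,d\mu)^q$ and $\sum a_j^q\le(\sum a_j)^q$) and then interpolate; your pointwise reduction avoids that interpolation step and handles both endpoints uniformly.
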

\noindent The proof of Lemma \ref{weightedmax} is standard:
see~\cite{stein93} for $\al=0$ and \cite{MR2534183} for $0<\al<n$.

\smallskip

The second Lemma is a fractional Carleson embedding theorem.   We do
not believe that this result is new; however, we give the short proof because we were
unable to find it in the literature.

\begin{lemma}\label{Carleson}
Given a dyadic grid $\D$ and a non-negative measure $\mu$ such that
$\mu(\R^n)=\infty$, suppose
$\{c_Q\}_{Q\in \D}$ is a sequence of nonnegative numbers satisfying
$$\sum_{Q\subseteq R} c_Q\leq A\, \mu(R), \qquad R\in \D.$$
Given $\alpha$, $0< \al<n$, and $p$, $1<p<n/\al$, define $q$ by
  \eqref{eqn:sobolev}.  Then for all non-negative functions $f$, 
\[ 
\left(\sum_{Q\in \D} c_Q\cdot
  \bigg(\frac{1}{\mu(Q)^{1-\frac{\al}{n}}}\int_Qf\,d\mu\bigg)^q\right)^{1/q}
\leq A^{1/q}\|M_{\al,\mu}^\D f\|_{L^q(\mu)} 
\lesssim A^{1/q} \|f\|_{L^p(\mu)}.
\]
\end{lemma}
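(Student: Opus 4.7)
My plan is to reduce the second inequality to the strong $(p,q)$ bound for the dyadic fractional maximal operator, which is exactly the content of Lemma~\ref{weightedmax}, and to establish the first inequality with the sharp constant $A^{1/q}$ by the standard layer-cake / stopping-cube argument, adapted to fractional averages. Abbreviate
\begin{equation*}
a_Q := \mu(Q)^{\frac{\al}{n}-1}\int_Q f\,d\mu,
\end{equation*}
and note that $a_Q \leq M^\D_{\al,\mu}f(x)$ for every $x\in Q$, so the collection of cubes with $a_Q>t$ is contained in the level set $\{M^\D_{\al,\mu}f>t\}$.

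The argument proceeds in three steps. First, the layer-cake identity $a_Q^q = q\int_0^\infty t^{q-1}\chi_{\{a_Q>t\}}\,dt$ combined with Fubini yields
\begin{equation*}
\sum_{Q\in\D} c_Q\, a_Q^q \;=\; q\int_0^\infty t^{q-1}\Bigl(\sum_{Q\in\D:\,a_Q>t} c_Q\Bigr)\,dt.
\end{equation*}
Second, for each fixed $t>0$ I would form the collection $\{R_j(t)\}$ of maximal dyadic cubes with $a_{R_j(t)}>t$; by maximality these are pairwise disjoint, each lies in $\{M^\D_{\al,\mu}f>t\}$, and every $Q$ with $a_Q>t$ is contained in a unique $R_j(t)$. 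Third, applying the Carleson hypothesis inside each $R_j(t)$,
\begin{equation*}
\sum_{Q\in\D:\,a_Q>t} c_Q \;=\; \sum_j \sum_{Q\subseteq R_j(t)} c_Q \;\leq\; A\sum_j \mu(R_j(t)) \;\leq\; A\,\mu\bigl(\{M^\D_{\al,\mu}f>t\}\bigr).
\end{equation*}
Plugging this bound back into the previous display and recognising the resulting $t$-integral as $\|M^\D_{\al,\mu}f\|_{L^q(\mu)}^q$ gives the first inequality, and Lemma~\ref{weightedmax} then closes out the second.

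The only technical point worth flagging is the existence of the maximal cubes $R_j(t)$, and it is here that the hypotheses $p<n/\al$ and $\mu(\R^n)=\infty$ enter. It suffices to prove the inequality for $f\in L^p(\mu)$ (the general case following by monotone convergence); for such $f$, H\"older's inequality gives $a_Q \leq \|f\|_{L^p(\mu)}\,\mu(Q)^{\al/n-1/p}$ with a strictly negative exponent, so $a_Q\to 0$ as $\mu(Q)\to\infty$. This rules out arbitrarily large dyadic cubes at any threshold $t>0$ and guarantees that the maximal cubes are well defined. Beyond this bookkeeping there is no substantive obstacle---the proof is the fractional analogue of the classical dyadic Carleson embedding argument.
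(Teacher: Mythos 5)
Your proposal is correct and reproduces the paper's own argument: the same layer-cake identity, the same stopping-cube decomposition into maximal dyadic cubes $R_j(t)$ with $a_{R_j(t)}>t$, the same observation that $\bigcup_j R_j(t)=\{M_{\al,\mu}^\D f>t\}$ combined with the Carleson hypothesis, and the same appeal to Lemma~\ref{weightedmax} for the boundedness of $M^\D_{\al,\mu}$. The only cosmetic difference is in the qualitative reduction used to guarantee that the maximal stopping cubes exist: the paper assumes $f$ bounded with compact support (passing to the general case by monotone convergence), while you reduce to $f\in L^p(\mu)$ and invoke H\"older to show $a_Q\to 0$ as $\mu(Q)\to\infty$; both reductions do the same job.
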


\begin{proof}  
The second inequality follows at once from Lemma~\ref{weightedmax}.
To prove the first, without loss of generality we may assume that $f$
is bounded and has compact support.
Let $(\D,\nu)$ be the measure space with $\nu(Q)=c_Q$, and define
$$a_{\al,\mu}(f,Q)=\frac{1}{\mu(Q)^{1-\frac{\al}{n}}}\int_Q f\,d\mu.$$
Then
$$\sum_{Q\in \D} c_Q\cdot \big(a_{\al,\mu}(f,Q)\big)^q
=q\int_0^\infty \lambda^{q-1}\nu(\{Q\in
\D:a_{\al,\mu}(f,Q)>\lambda\})\,d\lambda.$$
Let $\Omega_\lambda=\{Q\in \D:a_{\al,\mu}(f,Q)>\lambda\}$ and
$\Omega_\lambda^*$ be the set of all maximal (with respect to
inclusion) dyadic cubes $R$ such that $a_{\al,\mu}(f,R)>\lambda$.
Then the cubes in $\Omega_\lambda^*$ are pairwise disjoint, each
$Q\in \Omega_\lambda$ is contained in some $R\in\Omega_\lambda^*$, and
$$\bigcup_{R\in \Omega_\lambda^*} R=\{M_{\al,\mu}^\D f>\lambda\}.$$
Hence, 
$$\nu(\Omega_\lambda)
=\sum_{Q\in \Omega_\lambda}c_Q
\leq \sum_{R\in \Omega_\lambda^*}
\sum_{Q\subseteq R} c_Q\leq A\sum_{R\in \Omega_\lambda^*} \mu(R)
=A\mu(\{M_{\al,\mu}^\D f>\lambda\}),$$
and so
\[ \sum_{Q\in \D} c_Q\cdot (a_{\al,\mu}(f,Q))^q
\leq Aq\int_0^\infty \lambda^{q-1}\mu(\{M_{\al,\mu}^\D
f>\lambda\})d\lambda
=A\|M_{\al,\mu}^\D f\|_{L^q(\mu)}^q.
\]
\end{proof}

The last lemma is a crucial exponential decay estimate in the spirit of
the John-Nirenberg inequality for BMO functions.  Similar estimates
can be found in \cite[Lemma 5.5]{HPTV2010} and \cite[Lemma
3.15]{MR2657437}.   Our proof is simplified because we are able to
take advantage of the sparse family of cubes. 

\begin{lemma} \label{lemma:expdecyLeb}
  Let $\Sp$ be a sparse family of cubes.  For any cube $R_0$ and every $k\geq 1$, 
\begin{equation} \label{eqn:expdecayLeb}
\Big|\Big\{x\in R_0: \sum_{\stackrel{Q\in \Sp}{Q\subseteq R_0}}\chi_Q(x)>k\Big\}\Big|\leq 2^{-k} |R_0|.
\end{equation}
\end{lemma}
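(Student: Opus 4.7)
I would prove (\ref{eqn:expdecayLeb}) by induction on $k$. For convenience, for a cube $R$ write $S_R(x):=\sum_{Q\in\Sp,\,Q\subseteq R}\chi_Q(x)$, so the goal is $|\{x\in R_0: S_{R_0}(x)>k\}|\le 2^{-k}|R_0|$. The base case $k=0$ is trivial since $\{S_{R_0}>0\}\subseteq R_0$.

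For the inductive step, let $\Sp^*(R_0)$ be the maximal elements of $\{Q\in\Sp: Q\subseteq R_0\}$; these cubes are pairwise disjoint and every $x\in R_0$ with $S_{R_0}(x)\ge 1$ lies in a unique $Q^*\in\Sp^*(R_0)$. A short dyadic nesting argument using the maximality of $Q^*$ shows that whenever $Q\in\Sp$, $Q\subseteq R_0$, and $x\in Q\cap Q^*$, we must have $Q\subseteq Q^*$; hence $S_{R_0}=S_{Q^*}$ on $Q^*$. It therefore suffices to prove $|\{x\in Q^*: S_{Q^*}(x)>k+1\}|\le 2^{-(k+1)}|Q^*|$ and sum over the disjoint cubes in $\Sp^*(R_0)$.

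To obtain that bound, I would further let $\Sp^{**}(Q^*)$ denote the maximal elements of $\{Q\in\Sp: Q\subsetneq Q^*\}$; since $Q^*\in\Sp$, sparseness gives $\sum_{Q^{**}\in\Sp^{**}(Q^*)}|Q^{**}|\le \tfrac12|Q^*|$. On $Q^*$ we have $S_{Q^*}=1+\sum_{Q\in\Sp,\,Q\subsetneq Q^*}\chi_Q$, and by the same nesting-plus-maximality argument, on each $Q^{**}$ this second sum equals $S_{Q^{**}}$. Therefore
$$\{x\in Q^*: S_{Q^*}(x)>k+1\}\subseteq\bigcup_{Q^{**}\in\Sp^{**}(Q^*)}\{x\in Q^{**}: S_{Q^{**}}(x)>k\},$$
and the inductive hypothesis applied to each $Q^{**}$ yields
$$|\{S_{Q^*}>k+1\}\cap Q^*|\le\sum_{Q^{**}}2^{-k}|Q^{**}|\le 2^{-k}\cdot\tfrac12|Q^*|=2^{-(k+1)}|Q^*|,$$
completing the induction. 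The conceptual content is very small: the main (minor) obstacle is verifying the two restriction identities $S_{R_0}|_{Q^*}=S_{Q^*}$ and $\bigl(\sum_{Q\in\Sp,\,Q\subsetneq Q^*}\chi_Q\bigr)|_{Q^{**}}=S_{Q^{**}}$, which together are exactly what let each inductive step peel off one ``layer'' of cubes and pick up the factor of $1/2$ from the sparseness inequality.
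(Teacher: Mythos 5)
Your proof is correct and is essentially the same argument as the paper's, just packaged as an induction on $k$ rather than as a direct iteration: the paper explicitly builds the nested generations $\Ps_1(R_0)\supset\Ps_2(R_0)\supset\cdots$, identifies $\{x:\sum\chi_Q>k\}$ with the union of level-$(k+1)$ cubes, and applies the sparseness bound $k$ times, while you peel off one generation per inductive step and invoke the hypothesis on the next generation of maximal cubes. The two routes rely on the identical combinatorial facts (dyadic nesting plus maximality to organize $\Sp(R_0)$ into layers, and the factor-$\tfrac12$ drop per layer from sparseness) and yield the same bound.
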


\begin{proof} Given $R_0$, set $\Sp(R_0)=\{Q\in \Sp:Q\subseteq R_0\}$.
  Let $\Ps_1(R_0)$ be the collection of all maximal cubes in
  $\Sp(R_0)$. Define $\Ps_{k+1}(R_0)$ inductively to be the collection
  of all $Q\in \Sp(R_0)$ that are maximal with respect to inclusion
  and such that there exists $Q'\in \Ps_k(R_0)$ with $Q\subsetneq Q'$.
 In other words,  $\Ps_{k+1}(R_0)$ is the collection of maximal cubes that are
  properly contained in the members of $\Ps_{k}(R_0)$.  We will refer
  to the members of $\Ps_k$ as ``cubes at the $k$-th level down."  Let
$$\Omega_k=\bigcup_{Q\in \Ps_k(R_0)}Q.$$
We claim that
$$\Big\{x\in R_0: \sum_{Q\in \Sp(R_0)}\chi_Q(x)>k\Big\}=\Omega_{k+1}.$$
Notice that the function 
$$f_{R_0}(x)=\sum_{Q\in \Sp(R_0)} \chi_Q(x)$$
is an integer valued function that counts the number of cubes in $\Sp(R_0)$ that contain $x$.  With this in mind it is easy to see that
$$\Big\{x\in R_0: \sum_{Q\in \Sp(R_0)}\chi_Q(x)>k\Big\}\supseteq \Omega_{k+1}.$$
To see the reverse inclusion, note that if 
$$x\in \Big\{x\in R_0: \sum_{Q\in \Sp(R_0)}\chi_Q(x)>k\Big\},$$
then $x$ belongs to at least $k+1$ cubes of $\Sp(R_0)$, so $x$ must
belong to a cube in $\Ps_{k+1}(R_0)$.  Finally, by the sparsity condition on
the family $\Sp(R_0)$ and the disjointness of the families
$\Ps_k(R_0)$ we have
$$|\Omega_{k+1}|\leq \frac12|\Omega_k|\leq \frac{1}{4}|\Omega_{k-1}|\leq\cdots \leq \frac{1}{2^{k}}|\Omega_1|\leq \frac{1}{2^{k}}|R_0|.$$

%
%
%
%
\end{proof}

\begin{remark} \label{remk:disjoint} We note one identity from the
  proof of Lemma \ref{lemma:expdecyLeb} that we will use below:
$$\{x\in {R_0}:\sum_{\stackrel{Q\in \Sp}{Q\subseteq R_0}}\chi_Q(x)>k\}=\bigcup_{Q\in \Ps_{k+1}(R_0)}Q,$$
where $\Ps_{k+1}(R_0)$ is the collection of maximal cubes in $\Sp$ contained in $R_0$ at the $(k+1)$-th level down.  \end{remark}

\begin{proof}[Proof of Theorem \ref{thm:maintest}]
   
To prove~\eqref{eqn:mainest2sup},  fix $R\in \D$ and let $\Sp(R)=\{Q\in \Sp:Q\subseteq R\}$.  It  will
suffice to show that
\begin{equation}\label{eqn:keyest2}
\Big(\int_R I_\al^{\Sp(R)}(\chi_Ru)^{p'}\sigma\,dx\Big)^{1/p'}
\lesssim [u,\sigma]_{A_{s(p)}}^{1/q}[u]_{A_\infty'}^{1/p'}u(R)^{1/q'}.
\end{equation}

To estimate the operator $I_\al^{\Sp(R)}$ we need to decompose the
family $\Sp(R)$ into a collection of smaller sets.   The first step
allows us to ``freeze'' (i.e., gain local control of) the $A_{s(p)}$
constant of $u$.  For each $a\in \Z$ define
$$\Q^a:=\Big\{Q\in \Sp(R): 2^{a}<
\Big(\,\dashint_Q u\,dx\Big)^{\frac{1}{q}}\Big(\,\dashint_Q \sigma\,dx\Big)^{\frac{1}{p'}}\leq 2^{a+1}\Big\}.$$
The set $\Q^a$ is empty if $2^a>[u,\sigma]_{A_{s(p)}}^{1/q}$,
so we may assume that $$-\infty< a \leq \log_2 [u,\sigma]_{A_{s(p)}}^{1/q}=\Gamma(u).$$  In
particular, we have that
\begin{equation} \label{eqn:sum-a}
 \sum_{a=-\infty}^{\Gamma(u)} 2^a \lesssim [u,\sigma]_{A_{s(p)}}^{1/q}.
\end{equation}

Our next step is to perform a Corona decomposition of $\Sp(R)$ similar
to that in~\cite{MR2657437}.  Given $a$, let $C^a_0$ be the set of
maximal cubes in $\Q^a$.  For each $k\geq 1$, define the set $C^a_k$
by induction to be the (possibly empty) collection of cubes $Q\in
\Q^a$ such that following three criteria are satisfied:
\begin{enumerate}
\item there exists $P\in C_{k-1}^a$ containing $Q$,
\item the inequality
\begin{equation}\label{stopineq}|Q|^{\frac{\al}{n}}\dashint_Q u\,dx>2|P|^{\frac{\al}{n}}\dashint_P u\,dx\end{equation}
holds, 
\item and $Q$ is maximal with respect to inclusion in $\Q^a$.
\end{enumerate}
Set $\Ca^a=\bigcup_k C^a_k$; we refer this set as the collection of
stopping cubes for the Corona decomposition of $\Q^a$.  $\Ca^a$ can be
thought of as the collection cubes in $\Q^a$ whose fractional average
increases by a factor of two when passing from parent to child in~$\Ca^a$.  

By the maximality of the stopping cubes, given any $Q\in \Q^a$ there
exists a smallest $P\in \Ca^a$ such that $P\supseteq Q$ and the
reverse of inequality \eqref{stopineq},
\begin{equation}\label{revineq}
|Q|^{\frac{\al}{n}}\dashint_Q u\,dx\leq 2|P|^{\frac{\al}{n}}\dashint_P
u\,dx,
\end{equation}
holds.   Denote this cube $P$ by $\Pi^a(Q)$.  For each $P\in \Ca^a$
let
$$\Q^a(P)=\{Q\in \Q^a: \Pi^a(Q)=P\}.$$
Then inequality \eqref{revineq} holds for all $Q\in \Q^a(P)$.

Finally, we want to control  one more value: for every integer $b\geq 0$ and
$P\in \Ca^a$,  let $\Q^a_b(P)$ be the set of  $Q\in \Q^a(P)$ such that
\begin{equation}\label{freezefracavg}
2^{-b}|P|^{\frac{\al}{n}}\dashint_P u\,dx < 
|Q|^{\frac{\al}{n}}\dashint_Q u\,dx\leq
2^{-b+1}|P|^{\frac{\al}{n}}\dashint_P u\,dx.
\end{equation}

By the above definitions, we have that 

$$\Sp(R)=\bigcup_{a=-\infty}^{\Gamma(u)}\Q^a, \ \ 
\Q^a=\bigcup_{P\in \Ca^a}\Q^a(P), \ \ 
\Q^a(P)=\bigcup_{b=0}^\infty \Q^a_b(P),$$
and each of these unions is disjoint.  Therefore, we can decompose the
operator as follows:

\begin{multline*}
I_\al^{\Sp(R)}u=\sum_{a=-\infty}^{\Gamma(u)}\sum_{P\in \Ca^a}\sum_{b=0}^\infty \sum_{Q\in \Q^a_b(P)} |Q|^{\frac{\al}{n}}\dashint_Qu\,dx\cdot\chi_Q\\
\leq 2\sum_{a=-\infty}^{\Gamma(u)}\sum_{b=0}^\infty 2^{-b}\sum_{P\in \Ca^a} |P|^{\frac{\al}{n}}\dashint_P u\,dx \sum_{Q\in \Q^a_b(P)}\chi_Q.
\end{multline*}
For each $k\geq 0$, define 
$$E^a_b(k,P)=\Big\{x\in P : k<\sum_{Q\in \Q^a_b(P)}\chi_Q(x)\leq k+1\Big\}$$
and 
$$F^a_b(k,P)=\Big\{x\in P : \sum_{Q\in \Q^a_b(P)}\chi_Q(x)> k\Big\}.$$
Then we have that
\begin{multline*} 
I_\al^{\Sp(R)}u(x)
\lesssim\sum_{a=-\infty}^{\Gamma(u)}\sum_{b=0}^\infty 2^{-b}\sum_{k=0}^\infty 
(k+1)\sum_{P\in \Ca^a} |P|^{\frac{\al}{n}}\dashint_P u\,dx 
\cdot \chi_{E^a_b(k,P)}(x)\\
\leq\sum_{a=-\infty}^{\Gamma(u)}\sum_{b=0}^\infty 2^{-b}\sum_{k=0}^\infty
 (k+1)\sum_{P\in \Ca^a} |P|^{\frac{\al}{n}}\dashint_P u\,dx 
\cdot \chi_{F^a_b(k,P)}(x).
\end{multline*}
Hence, by Minkowski's inequality, 
\begin{multline}\label{reducedest}
\lefteqn{\left(\int_R(I_\al^{\Sp(R)}u)^{p'}\sigma\,dx\right)^{1/p'}}\\ 
\lesssim \sum_{a=-\infty}^{\Gamma(u)}\sum_{b=0}^\infty 2^{-b}
\sum_{k=0}^\infty(k+1)
\left(\int_R\Big(\sum_{P\in \Ca^a} 
|P|^{\frac{\al}{n}}\dashint_Pu\,dx
\cdot \chi_{F^a_b(k,P)}\Big)^{p'}\sigma\,dx\right)^{1/p'}. 
\end{multline}

\smallskip

To estimate the last term, we will first show that for each
$a,\,b$ and $k$, 
\begin{multline} \label{pinsidesum}
\left(\int_R\Big(\sum_{P\in \Ca^a} |P|^{\frac{\al}{n}}
\dashint_Pu\,dx\cdot \chi_{F^a_b(k,P)}\Big)^{p'}\sigma\,dx\right)^{1/p'}\\
 \lesssim\left(\sum_{P\in \Ca^a}
   \Big(|P|^{\frac{\al}{n}}\dashint_Pu\,dx\Big)^{p'}
\cdot \sigma(F^a_b(k,P))\right)^{1/p'}.
\end{multline}
To prove this, note that since the cubes in $\Ca^a$ are stopping
cubes,  the set of $x\in R$ that belongs to infinitely many $P\in \Ca^a$ has
measure zero. Fix  $x\in R$ not in this set, and let $\{P_i\}_{i=0}^m$ be the
stopping cubes such that $ P_0\subset P_1\subset \cdots \subset
P_m \subset R$ and $x\in F_b^a(k,P_i)$.   By the 
definition of the stopping cubes we have that
$$|P_i|^{\frac{\al}{n}}\dashint_{P_i} u\,dx< 2^{-i}|P_0|^{\frac{\al}{n}}\dashint_{P_0}u\,dx.$$
Therefore, 
\begin{multline*}
\Big(\sum_{P\in \Ca^a} |P|^{\frac{\al}{n}}\dashint_Pu\,dx
\cdot \chi_{F^a_b(k,P)}(x)\Big)^{p'}
=\Big(\sum_{i=0}^m |P_i|^{\frac{\al}{n}}\dashint_{P_i}u\,dx\Big)^{p'}\\
<\Big(\sum_{i=0}^m
2^{-i}\Big)^{p'}\Big(|P_0|^{\frac{\al}n}\dashint_{P_0} u\,dx\Big)^{p'}
< 2^{p'}\sum_{P\in \Ca^a}\Big(|P|^{\frac{\al}{n}}\dashint_P
u\,dx\Big)^{p'} \chi_{F_a^b(k,P)}(x).
\end{multline*}
If we integrate this quantity over $R$ with respect to $\sigma\,dx$ we
get inequality~\eqref{pinsidesum}.

\smallskip

To continue, suppose for a moment that we have the exponential decay
estimate
\begin{equation}\label{eqn:expdecay}
\sigma(F_b^a(k,P))\lesssim 2^{-ck} \sigma(P).
\end{equation}
Then by inequalities~\eqref{pinsidesum} and~\eqref{eqn:expdecay} we
have that
\begin{align}
&
\left(\int_R(I_\al^{\Sp(R)}u)^{p'}\sigma\,dx\right)^{1/p'} \notag \\ 
& \qquad \qquad \lesssim \sum_{a=-\infty}^{\Gamma(u)}\sum_{b=0}^\infty 2^{-b}
\sum_{k=0}^\infty2^{-ck}(k+1)
\left(\sum_{P\in \Ca^a}
  \Big(|P|^{\frac{\al}{n}}\dashint_Pu\,dx\Big)^{p'}
\cdot \sigma(P)\right)^{1/p'} \notag \\
& \qquad \qquad \lesssim \sum_{a=-\infty}^{\Gamma(u)}\left(\sum_{P\in \Ca^a}
  \Big(|P|^{\frac{\al}{n}}\dashint_Pu\,dx\Big)^{p'}
\cdot \sigma(P)\right)^{1/p'}.  \label{eqn:almost-there}
\end{align}

To estimate the final sum, note first that by the definition of
$\Q^a$, if $P \in \Ca^a$, 
\[ \left(\avgint_P u\,dx\right)^{s(q')-1} 
\left( \avgint_P \sigma \,dx \right) \lesssim 2^{ap'}. \]
Therefore, we have that 
\begin{multline}
 \Big(|P|^{\frac{\al}{n}}\dashint_Pu\,dx\Big)^{p'}\cdot \sigma(P) 
=\Big(\frac{1}{u(P)^{1-\frac{\al}{n}}}\int_P \chi_R \cdot
u\,dx\Big)^{p'} 
\frac{u(P)^{s(q')}\sigma(P)}{|P|^{s(q')}} \\
\lesssim 2^{ap'}\Big(\frac{1}{u(P)^{1-\frac{\al}{n}}}
\int_P \chi_R \cdot u\,dx\Big)^{p'} u(P).\label{eqn:2aest}
\end{multline}

For cubes $Q\in \D$, define the sequence $\{c_Q\}$ by 
$$c_Q=\left\{\begin{array}{cc}u(Q) & Q\in \Ca^a \vspace{3mm} \\  0 & Q\notin \Ca^a. \end{array}\right.$$
We claim this is a Carleson sequence and 
\begin{equation}\label{eqn:carleson}
\sum_{Q\subseteq P} c_{Q}\lesssim [u]_{A_\infty'} u(P).
\end{equation}
Fix a cube $P$; since $\Ca^a\subset \Sp(R)$, 
\[ 
\sum_{Q\subseteq P} c_Q
\leq \sum_{\substack{Q\in \Sp(R)\\ Q\subseteq P}} u(Q)
\lesssim \sum_{\substack{Q\in \Sp(R)\\ Q\subseteq P}} \frac{u(Q)}{|Q|}|E(Q)|
\leq \int_P M(\chi_P u)\,dx\leq [u]_{A_\infty'}u(P).
\]

Therefore, if we combine inequalities~\eqref{eqn:almost-there}
and~\eqref{eqn:2aest}, then by Lemmas~\ref{weightedmax}
and~\ref{Carleson} and inequality~\eqref{eqn:sum-a} we have that 
\begin{align*}
 \left(\int_R(I_\al^{\Sp(R)}u)^{p'}\sigma\,dx\right)^{1/p'}
& \lesssim \sum_{a=-\infty}^{\Gamma(u)}2^{a}\left( \sum_{P\in \Ca^a} 
\Big(\frac{1}{u(P)^{1-\frac{\al}{n}}}
\int_P \chi_R \cdot u\,dx\Big)^{p'} u(P)\right)^{1/p'} \\
&\lesssim [u]_{A_\infty'}^{1/p'}\bigg(\sum_{a=-\infty}^{\Gamma(u)}2^{a}\bigg)
\left(\int_{\R^n} M_{\al,u}^\D(\chi_Ru)^{p'}u\,dx\right)^{1/p'}\\
&\lesssim [u,\sigma]_{A_{s(p)}}^{1/q}[u]_{A_\infty'}^{1/p'}u(R)^{1/q'}.
\end{align*}

\medskip

To complete the proof it remains to prove inequality
\eqref{eqn:expdecay}:   for $a$, $b$, $k$, fixed and $P\in \Ca^a$, 
$$\sigma(F^a_b(k,P)) = \sigma\Big(\Big\{x\in P:
\sum_{Q\in\Q^a_b(P)}\chi_Q(x)>k\Big\}\Big)\lesssim 2^{-ck}\sigma(P).$$

If $x\in F^a_b(k,P)$, then clearly $x\in Q$ for some $Q\in \Q^a_b(P)$.
Therefore, if we  let $\MM$ be the
collection of maximal, disjoint cubes $Q\in \Q^a_b(P)$ contained in $P$, we have that 
\begin{multline}
\sigma\Big(\Big\{x\in P: \sum_{Q\in\Q^a_b(P)}\chi_Q(x)>k\Big\}\Big)
=\sum_{M\in \MM}\sigma\Big(\Big\{x\in M:
\sum_{\substack{Q\in\Q^a_b(P)\\ Q\subseteq M}}\chi_Q(x)>k\Big\}\Big). \label{Msum} \end{multline}
Fix $M\in \MM$ and notice that the family of cubes $Q\in \Q^a_b(P)$ is a sparse family of cubes contained in $P$. For each $M\in\MM$, as in Lemma \ref{lemma:expdecyLeb} (see Remark \ref{remk:disjoint}) we may write
$$\{x\in M: \sum_{\substack{Q\in\Q^a_b(P)\\ Q\subseteq M}}\chi_Q(x)>k\}=\bigcup_{L\in \Ps_{k+1}(M)} L$$
where the union is made up of maximal cubes in contained in $M$ at the $(k+1)$-th level down.
For any cube, $Q\in \Q^a_b(P)$ (and in particular if $Q=L\in \Ps_{k+1}(M)$ or $M\in \MM$) by
the definition of $\Q^a(P)$ and \eqref{freezefracavg} we have that
\begin{equation}\label{eqn:lebweightequiv}
\sigma(Q)\simeq 2^{ap'}2^{b\frac{p'}{q}}
\Big(\frac{|P|^{1-\frac{\al}{n}}}{u(P)}\Big)^{\frac{p'}{q}}|Q|^{\frac{p'}{q}\frac{\al}{n}+1}.
\end{equation}
Therefore, we can estimate as follows:  by  one side of inequality
\eqref{eqn:lebweightequiv}, with $Q=L$ 
\begin{align*}
\lefteqn{\sigma(\{x\in M: \sum_{\substack{Q\in\Q^a_b(P)\\ Q\subseteq M}}\chi_Q(x)>k\})
 =\sum_{L\in \Ps_{k+1}(M)}\sigma(L)}\\
& \lesssim
2^{ap'}2^{b\frac{p'}{q}}\Big(\frac{|P|^{1-\frac{\al}{n}}}{u(P)}\Big)^{\frac{p'}{q}}
\sum_{L\in \Ps_{k+1}(M)} |L|^{\frac{p'}{q}\frac{\al}{n}+1}; \\
\intertext{since $1+\frac{\al}{n}\frac{p'}{q}\geq 1$,}
& \leq
2^{ap'}2^{b\frac{p'}{q}}\Big(\frac{|P|^{1-\frac{\al}{n}}}{u(P)}\Big)^{\frac{p'}{q}}
\Big(\sum_{L\in \Ps_{k+1}(M)} |L|\Big)^{\frac{p'}{q}\frac{\al}{n}+1} \\
& \leq
2^{ap'}2^{b\frac{p'}{q}}\Big(\frac{|P|^{1-\frac{\al}{n}}}{u(P)}\Big)^{\frac{p'}{q}}
|\{x\in M:\sum_{\substack{Q\in\Q^a_b(P)\\Q\subseteq M}}\chi_Q(x)>k\}|^{\frac{p'}{q}\frac{\al}{n}+1};
\intertext{by inequality \eqref{eqn:expdecayLeb} and the other half of
 inequality \eqref{eqn:lebweightequiv} with $Q=M$,}
&\leq 2^{ap'}2^{b\frac{p'}{q}}\Big(\frac{|P|^{1-\frac{\al}{n}}}{u(P)}\Big)^{\frac{p'}{q}} 2^{-k(\frac{p'}{q}\frac{\al}{n}+1))}|M|^{\frac{p'}{q}\frac{\al}{n}+1}\\
&\lesssim 2^{-k(\frac{p'}{q'}\frac{\al}{n}+1)} \sigma(M).
\end{align*}
If we combine this inequality with \eqref{Msum}, we get
\begin{align*}
\sigma\Big(\Big\{x\in P: \sum_{Q\in\Q^a_b(P)}\chi_Q(x)>k\Big\}\Big)&\leq \sum_{M\in \MM}\sigma(\{x\in M: \sum_{\substack{Q\in\Q^a_b(P)\\ Q\subseteq M}}\chi_Q(x)>k\})\\
&\lesssim 2^{-ck}\sum_{M\in \MM} \sigma(M)\\
&\leq 2^{-ck}\sigma(P)
\end{align*}
as desired.  
\end{proof}

\section{Logarithmic bump condtions} \label{proofs}

In this section we prove Theorems~\ref{thm:mainweak},
\ref{thm:weakloglog}, \ref{thm:mainstrong}
and~\ref{thm:strongloglog}.   We first consider the results for log
bumps.

\begin{theorem} \label{thm:twoweighttesting}
Fix $\alpha$,  $0<\al<n$, and $1<p< q < \infty$ such that
$\frac{p'}{q'}(1-\frac{\al}{n})\geq 1$.  Suppose 
$\Phi(t)=t^{q}\log(e+t)^{q-1+\delta}$ for some $\delta>0$ and
$(u,\sigma)$ is a pair of weights that satisfies
$$K=\sup_{Q}|Q|^{\frac{\al}{n}+\frac{1}{q}-\frac1p}\|u^{\frac{1}{q}}\|_{\Phi,Q}
\|\sigma^{\frac{1}{p'}}\|_{p',Q}<\infty.$$
Then for every dyadic grid $\D$ with sparse subset $\Sp$,
\begin{equation} \label{eqn:twt1}
[\sigma,u]_{(I^\Sp_\al)^{q',p'}}^\D \lesssim K.
\end{equation}
Similarly, if $\frac{q}{p}\Big(1-\frac{\al}{n}\Big)\geq 1$,
$\Psi(t)=t^{p'}\log(e+t)^{p'-1+\delta}$ and the pair $(u,\sigma)$ satisfies
$$K=\sup_{Q}|Q|^{\frac{\al}{n}+\frac{1}{q}-\frac1p}\|u^{\frac{1}{q}}\|_{p,Q}
\|\sigma^{\frac{1}{p'}}\|_{\Psi,Q}<\infty,$$
then for every dyadic grid $\D$ with sparse subset $\Sp$,
\begin{equation} \label{eqn:twt2}
[u,\sigma]_{(I^\Sp_\al)^{p,q}}^\D \lesssim K.
\end{equation}
\end{theorem}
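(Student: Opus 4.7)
The plan is to prove \eqref{eqn:twt1}; \eqref{eqn:twt2} follows by a symmetric/dual argument. Fix $R\in\D$ and set $\Sp(R)=\{Q\in\Sp:Q\subseteq R\}$; by the definition of the testing constant it suffices to establish
$$\bigg(\int_R I_\al^{\Sp(R)}(u\chi_R)^{p'}\sigma\,dx\bigg)^{1/p'}\lesssim K\,u(R)^{1/q'}.$$
The strategy is to mimic the three-parameter corona decomposition used to prove Theorem~\ref{thm:maintest} — stratification by an index $a$, a stopping-time corona within each stratum yielding cubes $\Ca^a$, and a substratification by $b\geq 0$ — but with two crucial substitutions. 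First, the $a$-stratification is tuned to the Luxemburg quantity $\|u^{1/q}\|_{\Phi,Q}\|\sigma^{1/p'}\|_{p',Q}$, so that the log bump hypothesis yields the direct bound $\leq K|Q|^{1/p-1/q-\al/n}$ on this product. Second, the Carleson embedding of Lemma~\ref{Carleson}, which in the $A_\infty$ proof required the $A_\infty'$ constant of $u$, must be replaced by a Carleson-type estimate extracted from the sparsity of $\Ca^a$ together with the boundedness $M_\Phi^\D:L^{q'}\to L^{q'}$, a consequence of $\bar\Phi\in B_{q'}$.

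In detail, after the decomposition and Minkowski's inequality, matters reduce to estimating
$$\sum_{a,b,k}2^{-b}(k+1)\bigg(\sum_{P\in\Ca^a}\bigl(|P|^{\al/n}\avgint_P u\bigr)^{p'}\sigma\bigl(F^a_b(k,P)\bigr)\bigg)^{1/p'}.$$
The exponential decay $\sigma(F^a_b(k,P))\lesssim 2^{-ck}\sigma(P)$ is the first main estimate. It is proved by replicating the argument leading to \eqref{eqn:lebweightequiv} but now extracting $\|\sigma^{1/p'}\|_{p',Q}\simeq 2^a|Q|^{1/p-1/q-\al/n}/\|u^{1/q}\|_{\Phi,Q}$ from the new stratification and combining it with Jensen's inequality $\|u^{1/q}\|_{\Phi,Q}\geq(\avgint_Q u)^{1/q}$ to obtain an upper-bound version of \eqref{eqn:lebweightequiv}. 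The decisive concavity step $\sum|L|^s\leq (\sum|L|)^s$ requires the exponent on $|Q|$ to be $\geq 1$, which is exactly what the hypothesis $\frac{p'}{q'}(1-\frac{\al}{n})\geq 1$ guarantees after the non-Sobolev factor $|Q|^{1/p-1/q-\al/n}$ has been folded in.

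Once the geometric series in $b$ and $k$ are summed, one is left with
$$\sum_a 2^a\bigg(\sum_{P\in\Ca^a}\bigl(|P|^{\al/n}\avgint_P u\bigr)^{p'}\sigma(P)\bigg)^{1/p'},$$
and here lies the main obstacle: closing this sum without any $A_\infty$ information on $u$. The plan is to feed $\sigma(P)$ into the log bump to peel off a residual factor $\|u^{1/q}\|_{\Phi,P}^{-p'}$, then use the sparsity of the stopping cubes $\Ca^a$ to dominate the inner sum pointwise by an integral involving the dyadic Orlicz maximal function $M_\Phi^\D u^{1/q}$, and finally invoke its $L^{q'}$-boundedness (from $\bar\Phi\in B_{q'}$) together with Lemma~\ref{weightedmax} applied at the indices $L^r(u)\to L^{p'}(u)$ with $1/r=1/p'+\al/n=1/q'$ to recover $u(R)^{1/q'}$. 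The delicate bookkeeping needed to make the Orlicz and weighted-maximal endpoints match is precisely what forces the restriction $\frac{p'}{q'}(1-\frac{\al}{n})\geq 1$: it is what allows the non-Sobolev correction $|Q|^{1/p-1/q-\al/n}$ coming from the bump condition to be absorbed compatibly on both sides.
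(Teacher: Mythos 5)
Your proposal captures the broad outline (corona decomposition, exponential decay estimate, Orlicz maximal function $M_{\bar\Phi}$ with $\bar\Phi\in B_{q'}$), but there are two genuine gaps, both at the place you flag as ``the main obstacle.''

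First, the stratification. You propose to define $\Q^a$ using the Luxemburg quantity $\|u^{1/q}\|_{\Phi,Q}\|\sigma^{1/p'}\|_{p',Q}$, but the paper stratifies by the fractional two-weight $A_{p,q}$ quantity $|Q|^{\frac{\al}{n}+\frac1q-\frac1p}(\avgint_Q u)^{1/q}(\avgint_Q\sigma)^{1/p'}$. This matters for the exponential decay. That estimate requires a two-sided equivalence $\sigma(Q)\simeq 2^{ap'}2^{bp'/q}(|P|^{1-\al/n}/u(P))^{p'/q}|Q|^{\frac{p'}{q'}(1-\frac{\al}{n})}$ for $Q\in\Q^a_b(P)$: the upper bound applied to the cubes $L$ at the $(k+1)$-st level down, and the lower bound applied to the top cubes $M\in\MM$, so that $\sum_L\sigma(L)\lesssim 2^{-ck}\sigma(M)$. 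With your Orlicz stratification, Jensen's inequality $\|u^{1/q}\|_{\Phi,Q}\geq(\avgint_Q u)^{1/q}$ hands you only the upper bound on $\sigma(Q)$; the lower bound, needed at $Q=M$, is lost because the corona stopping time controls $\avgint_Q u$ and not $\|u^{1/q}\|_{\Phi,Q}$, and there is no reverse Jensen inequality available.

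Second, and more seriously, the sum over $a$ does not close as you describe. After summing in $b$ and $k$ the paper is left with $\sum_{a\in\Omega(K)}\left(\sum_{P\in\Ca^a}(|P|^{\al/n}\avgint_P u)^{p'}\sigma(P)\right)^{1/p'}$ — no factor $2^a$ in front; the $2^a$ in your display comes from the $A_\infty$ proof of Theorem~\ref{thm:maintest} and is not available here. Your ``peel off $\|u^{1/q}\|_{\Phi,P}^{-p'}$ and use $M_{\bar\Phi}$'' plan does give, by the sparsity and $\bar\Phi\in B_{q'}$, a bound $\lesssim K\,u(R)^{1/q'}$ on the inner $1/p'$-th root, but that bound is uniform in $a$, and $\Omega(K)=\Z\cap(-\infty,\log_2 K]$ is infinite, so the $a$-sum diverges. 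The ingredient you are missing is Lemma~\ref{lemma:crv}: with $\Phi_0(t)=t^q\log(e+t)^{q-1+\delta/2}$ one has $\|u^{1/q}\|_{\Phi_0,P}\lesssim\|u^{1/q}\|_{\Phi,P}^{1-\gamma}\|u^{1/q}\|_{q,P}^{\gamma}$ for some $\gamma\in(0,1)$. Applying generalized H\"older with the \emph{auxiliary} pair $(\Phi_0,\bar\Phi_0)$ (not $(\Phi,\bar\Phi)$) and then the interpolation inequality converts the bump hypothesis into $K^{1-\gamma}$ and the $\Q^a$ membership into $2^{\gamma a}$, so the inner estimate becomes $\lesssim K^{1-\gamma}2^{\gamma a}u(R)^{1/q'}$, after which $\sum_{a\leq\log_2 K}2^{\gamma a}\lesssim K^{\gamma}$ closes the sum. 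The auxiliary bump $\Phi_0$ is also what makes $\bar\Phi_0\in B_{q'}$ available for the maximal-function step. Finally, Lemma~\ref{weightedmax} (the weighted fractional maximal function on $L^p(\mu)\to L^q(\mu)$) plays no role here; in this theorem it is entirely replaced by the $L^{q'}$-boundedness of $M_{\bar\Phi_0}$.
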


As in the previous section, Theorems~\ref{thm:mainweak}
and~\ref{thm:mainstrong} follow immediately from
Theorem~\ref{thm:twoweighttesting} and the results in
Section~\ref{preliminaries}. 

\smallskip

For the proof of Theorem~\ref{thm:twoweighttesting} we need three
lemmas.  The first is classical:  see~\cite{MR2797562,rao-ren91}.  

\begin{lemma} \label{lemma:holder}
Given a Young function $\Phi$, for every cube $Q$ and functions
$f$ and $g$, 
$$\dashint_Q |fg|\,dx\lesssim \|f\|_{\Phi,Q}\|g\|_{\bar{\Phi},Q}.$$
\end{lemma}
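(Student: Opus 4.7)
The plan is to derive the inequality from the pointwise Young-type inequality
$$st \le \Phi(s) + \bar{\Phi}(t), \qquad s,t \ge 0,$$
which is immediate from the defining identity $\bar{\Phi}(t) = \sup_{s>0}\{st - \Phi(s)\}$.

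First I would dispose of the trivial case where $f$ or $g$ vanishes a.e.\ on $Q$. Otherwise, set $\lambda = \|f\|_{\Phi,Q}$ and $\mu = \|g\|_{\bar{\Phi},Q}$, both strictly positive. Since the infimum in the Luxemburg norm need not be attained in general, I would work with $\lambda(1+\epsilon)$ and $\mu(1+\epsilon)$ for $\epsilon > 0$ and send $\epsilon \to 0^+$ at the end; this replacement ensures that
$$\dashint_Q \Phi\!\left(\frac{|f|}{\lambda(1+\epsilon)}\right)dx \le 1 \quad \text{and} \quad \dashint_Q \bar{\Phi}\!\left(\frac{|g|}{\mu(1+\epsilon)}\right)dx \le 1.$$

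Next I would apply the pointwise Young inequality above with $s = |f(x)|/(\lambda(1+\epsilon))$ and $t = |g(x)|/(\mu(1+\epsilon))$, obtaining
$$\frac{|f(x)g(x)|}{\lambda\mu(1+\epsilon)^2} \le \Phi\!\left(\frac{|f(x)|}{\lambda(1+\epsilon)}\right) + \bar{\Phi}\!\left(\frac{|g(x)|}{\mu(1+\epsilon)}\right).$$
Averaging over $Q$ and combining with the two Luxemburg bounds above yields $\dashint_Q |fg|\,dx \le 2(1+\epsilon)^2\lambda\mu$, and letting $\epsilon \to 0^+$ produces the claim.

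There is no genuine obstacle here; the only subtlety is the mild limiting step needed to handle the possibly non-attained infimum in the definition of the Luxemburg norm, which is routine. The argument in fact produces the explicit constant $2$, which justifies the use of $\lesssim$ in the statement.
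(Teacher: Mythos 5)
The paper does not supply a proof of this lemma; it simply records it as classical and refers to the literature (Rao--Ren and \cite{MR2797562}). Your argument is exactly the standard proof those sources give: derive the pointwise Young inequality $st\le\Phi(s)+\bar\Phi(t)$ from the definition of the associate function, normalize $f$ and $g$ by their Luxemburg norms, and average over $Q$, which is correct and yields the explicit constant $2$. One small remark: the $\epsilon$-approximation is harmless but not actually needed, since for a continuous Young function the infimum in the Luxemburg norm is attained (let $\lambda_n\downarrow\lambda_0=\|f\|_{\Phi,Q}$ with $\dashint_Q\Phi(|f|/\lambda_n)\,dx\le1$ and apply monotone convergence), so one may set $\lambda=\|f\|_{\Phi,Q}$ and $\mu=\|g\|_{\bar\Phi,Q}$ directly.
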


To state the second, we need a definition.   Given a Young function $\Phi$  define the corresponding maximal function,
$$M_\Phi f(x)=\sup_{Q\ni x} \|f\|_{\Phi,Q}.$$
The
following result is due to Perez \cite{MR1327936} (also
see~\cite{MR2797562}).  

\begin{lemma} \label{lemma:perez-max}
Given a Young function $\Phi$ and any $p$, $1<p<\infty$,
$$\|M_\Phi f\|_{L^p(\R^n)}\lesssim \|f\|_{L^p(\R^n)}$$
if and only if $\Phi\in B_p$.
\end{lemma}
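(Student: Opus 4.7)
The plan is to prove the two implications separately. The sufficient direction ($\Phi\in B_p\Rightarrow M_\Phi$ bounded on $L^p$) proceeds by the usual strategy for Orlicz maximal operators: establish a weak-type endpoint via a Calder\'on--Zygmund covering argument, then combine with a truncation to integrate the distribution function. The necessary direction is obtained by testing $M_\Phi$ against an explicit characteristic function and computing the resulting Luxemburg averages.

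For sufficiency, the first step is the weak-type bound
\[
|\{x:M_\Phi f(x)>t\}|\lesssim \int_{\R^n}\Phi(|f(y)|/t)\,dy.
\]
This follows from the observation that $M_\Phi f(x)>t$ is equivalent to the existence of a cube $Q\ni x$ with $\dashint_Q\Phi(|f|/t)\,dy>1$, whence a Vitali--type selection from the covering of the level set produces a disjoint family $\{Q_j\}$ with $|Q_j|<\int_{Q_j}\Phi(|f|/t)\,dy$. The second step is truncation: writing $f=f\chi_{\{|f|>ct\}}+f\chi_{\{|f|\leq ct\}}$, one chooses the constant $c=c(\Phi)>0$ so that the second piece satisfies $M_\Phi(\cdot)\leq t/2$ pointwise. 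Feeding this into the layer-cake identity, applying Fubini, and substituting $s=|f(x)|/t$ yields
\[
\|M_\Phi f\|_p^p\lesssim \int_{\R^n}|f(x)|^p\int_{1/c}^{\infty}\frac{\Phi(s)}{s^{p+1}}\,ds\,dx\lesssim \|f\|_p^p,
\]
the inner integral being finite exactly by the $B_p$ hypothesis.

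For necessity, test against $f=\chi_{Q_0}$ with $Q_0=[0,1)^n$. For $x$ with $|x|$ large, the smallest cube $Q$ containing $Q_0\cup\{x\}$ has $|Q|\simeq |x|^n$, and the defining equation of the Luxemburg norm yields $\|\chi_{Q_0}\|_{\Phi,Q}=1/\Phi^{-1}(|Q|/|Q_0|)$, so $M_\Phi f(x)\gtrsim 1/\Phi^{-1}(c|x|^n)$. Integrating in polar coordinates gives $\|M_\Phi f\|_p^p\gtrsim \int_1^{\infty}r^{n-1}\Phi^{-1}(cr^n)^{-p}\,dr$. The change of variables $u=\Phi^{-1}(cr^n)$ converts this into a constant multiple of $\int^{\infty}\Phi'(u)u^{-p}\,du$, which by the convexity inequality $\Phi'(u)\geq \Phi(u)/u$ dominates $\int^{\infty}\Phi(u)u^{-p-1}\,du$. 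Since $M_\Phi f\in L^p$ by hypothesis, the last integral must converge, which is exactly the $B_p$ condition.

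The main obstacle is the truncation step in the sufficiency direction: one needs to pin down a constant $c=c(\Phi)>0$ for which $\|g\|_\infty\leq ct$ forces $\|g\|_{\Phi,Q}\leq t/2$ uniformly in $Q$. This is an easy consequence of the Luxemburg definition once one observes that $\Phi^{-1}(1)$ is a positive constant, but it is precisely what separates the Orlicz case from the classical $L^1$ weak-type theory and is where the structure of $\Phi$ enters. Everything else is routine manipulation.
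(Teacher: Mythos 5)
Your proof is correct and follows essentially the same route as the cited original argument of P\'erez (the paper does not prove Lemma~\ref{lemma:perez-max} itself but refers to \cite{MR1327936,MR2797562}, where exactly this Orlicz weak-type estimate plus truncation plus layer-cake argument appears for sufficiency, and the test against $\chi_{Q_0}$ with the convexity bound $\Phi'(u)\geq \Phi(u)/u$ for necessity). The only cosmetic slips are constant-tracking ones (the lower limit in your $s$-integral should be of the form $Cc$ rather than $1/c$, and $\Phi$ appears there with a rescaled argument), neither of which affects the finiteness that characterizes $B_p$.
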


The third lemma is from \cite{CRV2012}.

\begin{lemma} \label{lemma:crv} 
Given $q$, $1<q<\infty$, let
  $\Phi(t)=t^q\log(e+t)^{q-1+\delta}$ and
  $\Phi(t)=t^q\log(e+t)^{q-1+\delta/2}$.  Then there exists 
  $\gamma$, $0<\gamma<1$,  such that for every cube $Q$,
\begin{equation}\label{wiggleroom}
\|u^{\frac{1}{q}}\|_{\Phi_0,Q}\lesssim 
\|u^{\frac{1}{q}}\|_{{\Phi},Q}^{1-\gamma}\|u^{\frac1q}\|_{q,Q}^{\gamma}.
\end{equation}
\end{lemma}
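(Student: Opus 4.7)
The plan is to exploit the exact algebraic identity
\[
\Phi_0(t) = \Phi(t)^{1-\gamma}\,(t^q)^{\gamma}, \qquad \gamma := \frac{\delta/2}{q-1+\delta} \in (0,1),
\]
which one verifies directly: $\Phi(t)^{1-\gamma}(t^q)^{\gamma} = t^{q(1-\gamma)}\log(e+t)^{(q-1+\delta)(1-\gamma)}\cdot t^{q\gamma} = t^q\log(e+t)^{q-1+\delta/2}$, using $(q-1+\delta)(1-\gamma) = q-1+\delta/2$. The function $\Phi_0$ thus lies on the ``interpolation line'' between $\Phi$ and the $L^q$ Young function $t^q$, and the lemma says the Luxemburg norms interpolate the same way.

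Set $\lambda_1 := \|u^{1/q}\|_{\Phi,Q}$ and $\lambda_2 := \|u^{1/q}\|_{q,Q}$. By the Luxemburg definition, $\avgint_Q \Phi(u^{1/q}/\lambda_1)\,dx \le 1$ and $\avgint_Q (u^{1/q}/\lambda_2)^q\,dx \le 1$. I would apply H\"older's inequality with conjugate exponents $1/(1-\gamma)$ and $1/\gamma$ to the integrand $\Phi(u^{1/q}/\lambda_1)^{1-\gamma}\,(u/\lambda_2^{q})^{\gamma}$, and then rewrite this integrand via the identity above as $(\lambda_1/\lambda_2)^{q\gamma}\,\Phi_0(u^{1/q}/\lambda_1)$. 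This yields
\[
\avgint_Q \Phi_0(u^{1/q}/\lambda_1)\,dx \;\le\; (\lambda_2/\lambda_1)^{q\gamma}.
\]
Note $\lambda_2 \le C\lambda_1$ since $\Phi(t)\gtrsim t^q$ pointwise, so this bound is genuinely smaller than $1$ whenever $\lambda_2 < \lambda_1$.

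Next, I would set $\lambda_0 := c\,\lambda_1^{1-\gamma}\lambda_2^{\gamma}$ for a constant $c\ge 1$ to be fixed, and write $\beta := \lambda_1/\lambda_0 = c^{-1}(\lambda_1/\lambda_2)^{\gamma}$, reducing the target to showing $\avgint_Q \Phi_0(\beta\cdot u^{1/q}/\lambda_1)\,dx \le 1$. When $\beta \le 1$, the inequality $\Phi_0(\beta t)\le \beta^q \Phi_0(t)$ together with the previous display gives a bound of $c^{-q}\le 1$. When $\beta \ge 1$, I would use the log-subadditivity $\log(e+\beta t)\le \log\beta + \log(e+t)$ (valid for $\beta\ge 1$) to split
\[
\Phi_0(\beta t) \;\lesssim\; \beta^q(\log\beta)^{q-1+\delta/2}\,t^q + \beta^q\,\Phi_0(t).
\]
Integrating against $\avgint_Q\,dx$ and plugging in the two known averages, and writing $r := \lambda_1/\lambda_2\ge 1$, reduces the problem to uniform boundedness of $(\log r)^{q-1+\delta/2}\,r^{-q(1-\gamma)}$ on $[1,\infty)$, which follows because any expression of the form $(\log r)^A r^{-B}$ with $A,B>0$ attains its maximum at $r=e^{A/B}$. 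Fixing $c$ large enough to absorb the resulting constants yields $\avgint_Q \Phi_0(u^{1/q}/\lambda_0)\,dx \le 1$, whence $\|u^{1/q}\|_{\Phi_0,Q} \le \lambda_0 \lesssim \lambda_1^{1-\gamma}\lambda_2^{\gamma}$.

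The main obstacle is the inhomogeneity of $\Phi_0$ under scaling of the argument: although $\Phi_0 = \Phi^{1-\gamma}(t^q)^{\gamma}$ is an exact pointwise identity, the Luxemburg functionals involve division inside a non-homogeneous function, so the identity cannot be transferred to norms directly. Handling this requires a careful comparison between $\Phi_0(u^{1/q}/\lambda_1)$ and $\Phi_0(u^{1/q}/\lambda_0)$, and it is precisely the polynomial decay $r^{-q(1-\gamma)}$ beating the logarithmic growth $(\log r)^{q-1+\delta/2}$ that lets one pass from the Hölder consequence to the desired norm inequality.
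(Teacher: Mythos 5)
Your proof is correct. Note that the paper does not actually prove this lemma --- it is imported verbatim from \cite{CRV2012} (``The third lemma is from \cite{CRV2012}'') --- so there is no in-paper argument to compare against; what you have supplied is a self-contained proof. The key points all check out: the exact factorization $\Phi_0(t)=\Phi(t)^{1-\gamma}(t^q)^{\gamma}$ with $\gamma=\tfrac{\delta/2}{q-1+\delta}\in(0,1)$ is verified by the exponent identity $(q-1+\delta)(1-\gamma)=q-1+\delta/2$; the H\"older step legitimately yields $\avgint_Q \Phi_0(u^{1/q}/\lambda_1)\,dx\le(\lambda_2/\lambda_1)^{q\gamma}$ because the integrand $\Phi(s)^{1-\gamma}(u/\lambda_2^q)^{\gamma}$ with $s=u^{1/q}/\lambda_1$ equals $(\lambda_1/\lambda_2)^{q\gamma}\Phi_0(s)$ exactly; and your handling of the inhomogeneity when passing from $\lambda_1$ to $\lambda_0=c\lambda_1^{1-\gamma}\lambda_2^{\gamma}$ is the genuinely nontrivial part and is done correctly, since $\log(e+\beta t)\le\log\beta+\log(e+t)$ does hold for $\beta\ge1$ and the resulting term $(\log r)^{q-1+\delta/2}r^{-q(1-\gamma)}$ is uniformly bounded on $[1,\infty)$, while the case $\beta\le1$ follows from $\Phi_0(\beta t)\le\beta^q\Phi_0(t)$. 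Choosing $c$ large to make the final average at most $1$ is legitimate. (One cosmetic remark: the statement in the paper has a typo --- both Young functions are named $\Phi$, the second should be $\Phi_0$ --- which you correctly silently repaired; you might also note explicitly that the endpoint facts $\avgint_Q\Phi(u^{1/q}/\lambda_1)\,dx\le1$ and the trivial cases $\lambda_1\in\{0,\infty\}$ are standard for Luxemburg norms.)
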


\begin{proof}[Proof of Theorem \ref{thm:twoweighttesting}] 
The proof is very similar to the proof of Theorem~\ref{thm:maintest}
and we will sketch briefly those parts that are the same.  As before,
we will only prove~\eqref{eqn:twt1}; the proof of \eqref{eqn:twt2} is
the same after making the obvious changes.
Fix $R\in \D$.  Then it will suffice to prove that 
\begin{equation} \label{keyest}
\left(\int_R (I_\al^{\Sp(R)}u)^{p'}\sigma\,dx\right)^{1/p'}
\lesssim K u(R)^{1/q'}.
\end{equation}

We decompose the family $\Sp(R)$; however, in the first step there is
a significant difference.  For $a\in \Z$ define
$$\Q^a:=\Big\{Q\in \Sp(R): 2^{a}<|Q|^{\frac{\al}{n}+\frac1q-\frac1p}
\Big(\,\dashint_Q u\,dx\Big)^{\frac{1}{q}}
\Big(\,\dashint_Q \sigma\,dx\Big)^{\frac{1}{p'}}\leq 2^{a+1}\Big\}.$$

Since $\|u^{\frac{1}{q}}\|_{q,Q}\leq \|u^{\frac{1}{q}}\|_{\Phi,Q}$,
by our assumption on $(u,\sigma)$, the set $\Q^a$ is empty if
$a>\log_2 K$.   Therefore, we will sum over $a$ contained in the set
$$\Omega(K)=\Z\cap(-\infty,\log_2K].$$

With this definition of $\Q^a$, for $a\in \Omega(K)$, define $\Ca^a$,
$\Q^a(P)$ and $\Q^a_b(P)$ exactly as before.  Then the same argument
shows that
\begin{multline} \label{eqn:first-step}
\lefteqn{\left(\int_R(I_\al^{\Sp(R)}u)^{p'}
\sigma\,dx\right)^{1/p'}}\\ 
\lesssim \sum_{a\in \Omega(K)}\sum_{b=0}^\infty 2^{-b}
\sum_{k=0}^\infty(k+1)\left(\sum_{P\in \Ca^a} 
\Big(|P|^{\frac{\al}{n}}\dashint_Pu\,dx\Big)^{p'}\cdot \sigma(F^a_b(k,P))\right)^{1/p'}.
\end{multline}
As before we have that 
\begin{equation*}
\sigma(F_b^a(k,P))\lesssim 2^{-ck} \sigma(P);
\end{equation*}
the proof is essentially the same as the proof of \eqref{eqn:expdecay}: the key difference is
that for $Q\in \Q^a_b(P)$ we now have
$$\sigma(Q)\simeq 2^{ap'}2^{b\frac{p'}{q}}\Big(\frac{|P|^{1-\frac{\al}{n}}}{u(P)}\Big)^{\frac{p'}{q}}|Q|^{\frac{p'}{q'}(1-\frac{\al}{n})}.$$
Moreover, we note that it is in this part of the proof that we use the
assumption that $\frac{p'}{q'}(1-\frac{\alpha}{n})\geq 1$ in order to pull
this power out of the sum.  (Cf.~ the argument immediately following~\eqref{eqn:lebweightequiv}.)
If we substitute this into \eqref{eqn:first-step}, we can now sum in
$b$ and $k$ to get
\begin{multline} \label{finalsum2}
\left(\int_R(I_\al^{\Sp(R)}u)^{p'}\sigma\,dx\right)^{1/p'} \\
\lesssim \sum_{a\in \Omega(K)} \sum_{b=0}^\infty \sum_{k=0}^\infty
2^{-ck}(k+1) 
\left(\sum_{P\in \Ca^a}
  \Big(|P|^{\frac{\al}{n}}\dashint_Pu\,dx\Big)^{p'}
\cdot \sigma(P)\right)^{1/p'} \\
\lesssim \sum_{a\in \Omega(K)}
\left(\sum_{P\in \Ca^a}
  \Big(|P|^{\frac{\al}{n}}\dashint_Pu\,dx\Big)^{p'}
\cdot \sigma(P)\right)^{1/p'}.
\end{multline}

To evaluate the inner sum we apply Lemma~\ref{lemma:crv}:
since $\sigma(P) = \|\sigma^{\frac{1}{p'}}\|_{p',P}^{p'}|P|$,
\begin{align*}
& \Big(|P|^{\frac{\al}{n}}\dashint_Pu\,dx\Big)^{p'}\sigma(P) \\
& \qquad =|P|^{\frac{\al}{n}p'}\left(\dashint_Pu\,dx\right)^{p'}\sigma(P)\\
& \qquad \lesssim |P|^{\frac{\al}{n}p'}\|u^{\frac{1}{q}}\|_{\Phi_0,P}^{p'}
\|u^{\frac{1}{q'}}\|_{\bar{\Phi}_0,P}^{p'}\sigma(P)\\
& \qquad \lesssim|P|^{\frac{\al}{n}p'}\|u^{\frac{1}{q}}\|_{\Phi,P}^{(1-\gamma) p'}
\|u^{\frac1q}\|_{q,P}^{p'\gamma}\|u^{\frac{1}{q'}}\|_{\bar{\Phi}_0,P}^{p'}\sigma(P)\\
& \qquad \lesssim|P|^{\frac{\al}{n}p'+\frac{p'}{q}-\frac{p'}{p}}\|u^{\frac{1}{q}}\|_{\Phi,P}^{ (1-\gamma)p'}
\|\sigma^{\frac{1}{p}}\|_{p,P}^{(1-\gamma)p'}
\|u^{\frac1q}\|_{q,P}^{p'\gamma}
\|\sigma^{\frac{1}{p}}\|_{p,P}^{\gamma p'}
\|u^{\frac{1}{q'}}\|_{\bar{\Phi}_0,P}^{p'}|P|^{1+\frac{p'}{p}-\frac{p'}{q}} \\
& \qquad \lesssim K^{(1-\gamma)p'}2^{ap'\gamma} \|u^{\frac{1}{q'}}\|_{\bar{\Phi}_0,P}^{p'}|P|^{\frac{p'}{q'}}.
\end{align*}
Therefore, the inner sum in \eqref{finalsum2} becomes
\begin{align*}
\left(\sum_{P\in \Ca^a} \Big(|P|^{\frac{\al}{n}}\dashint_P
  u\,dx\Big)^{p'} \sigma(P)\right)^{1/p'}
&\lesssim K^{1-\gamma}2^{\gamma a}\left(\sum_{P\in \Ca^a} 
\|u^{\frac1{q'}}\|_{\bar{\Phi}_0,P}^{p'}|P|^{\frac{p'}{q'}}\right)^{1/p'};\\
\intertext{since $q'/p'\leq 1$ we may pull this power into the sum, and since the cubes in $\Ca^a$ are a
  sparse family we can apply inequality~\eqref{eqn:equivmeasure} to get}
& \lesssim K^{1-\gamma}2^{\gamma a}\left(\sum_{P\in \Ca^a} 
\|u^{\frac1{q'}}\|_{\bar{\Phi}_0,P}^{q'}|E(P)|\right)^{1/q'} \\
& \leq K^{1-\gamma}2^{\gamma a}\left(\sum_{P\in \Ca^a} 
\int_{E(P)}M_{\bar{\Phi}_0}(u^{\frac{1}{q'}}\chi_R)^{q'}\,dx\right)^{1/q'} \\
&\leq K^{1-\gamma}2^{\gamma a}\left(\int_{\R^n} 
M_{\bar{\Phi}_0}(u^{\frac1{q'}}\chi_R)^{q'}\,dx\right)^{1/q'} \\
&\lesssim K^{1-\gamma}2^{\gamma a}u(R)^{1/q'}.
\end{align*}
In the final inequality we used Lemma~\ref{lemma:perez-max}; we can do
this because 
$$\bar{\Phi}_0(t)\approx t^{q'}\log(e+t)^{-1-\frac{\delta}{2(q-1)}}$$
satisfies the $B_{q'}$ condition.

Finally, given the factor $2^{\gamma a}$, if we plug this estimate
into \eqref{finalsum2}, the final sum in $a$  converges and is bounded
by $K^\gamma$. We therefore get the desired
inequality and  this completes the proof.
\end{proof}

\begin{remark}
In the proof of Theorem~\ref{thm:twoweighttesting} we actually get a
sharper, ``mixed'' estimate.  If we define
$$[u,\sigma]_{A_{p,q}^\al}:=\sup_Q |Q|^{\frac{\al}{n}+\frac1q-\frac1p}
\Big(\,\dashint_Q u\,dx\Big)^{\frac{1}{q}}
\Big(\,\dashint_Q \sigma\,dx\Big)^{\frac{1}{p'}}$$
then a careful analysis of the constants in the proof shows that we
actually get the sharper bound
\[ [\sigma,u]_{(I^\Sp_\al)^{q',p'}}^\D\lesssim K^{1-\gamma} [u,\sigma]^\gamma_{A^{\alpha}_{p,q}}\leq K. \]
Moreover, if we modify the definition of $\Phi_0$ by replacing
$\delta/2$ by a suitable constant, we can prove that for any
$\epsilon$, $0<\epsilon<1$,  we can get the bound
\[ [\sigma,u]_{(I^\Sp_\al)^{q',p'}}^\D\leq C(\epsilon)K^{1-\epsilon} [u,\sigma]^\epsilon_{A^{\alpha}_{p,q}}, \]
where $C(\epsilon)\rightarrow \infty$ as $\epsilon\rightarrow 1$.
Details are left to the interested reader. 
\end{remark}

\medskip

We now prove Theorems~\ref{thm:weakloglog}
and~\ref{thm:strongloglog}.   To do so we need to extend
Theorem~\ref{thm:twoweighttesting} to the scale of loglog bumps.  

\begin{theorem} \label{thm:loglog}
The conclusions of Theorem~\ref{thm:twoweighttesting} remain true with
the same hypotheses if we replace the Young functions $\Phi$ and
$\Psi$ with 
\begin{gather*}
\Phi(t) = t^q\log(e+t)^{q-1}\log\log(e^e+t)^{q-1+\delta}, \\
\Psi(t) = t^{p'}\log(e+t)^{p'-1}\log\log(e^e+t)^{p'-1+\delta},
\end{gather*}
where $\delta>0$ is taken sufficiently large.
\end{theorem}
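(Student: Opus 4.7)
The plan is to re-run the proof of Theorem~\ref{thm:twoweighttesting} essentially verbatim, changing only the ``wiggle room'' Lemma~\ref{lemma:crv} and the verification that the intermediate Young function satisfies a Perez-type $B_{q'}$ (resp. $B_p$) condition. Every other ingredient---the corona stratification into $\Q^a$/$\Q^a(P)$/$\Q^a_b(P)$, the exponential decay \eqref{eqn:expdecay}, the sparseness estimate \eqref{eqn:equivmeasure}, the sum in $b$ and $k$, and the Carleson-type packing---depends only on the abstract constant $K$ and on the sparse family, not on the shape of $\Phi$. Consequently the argument reduces to showing inequality \eqref{finalsum2} and then estimating the inner sum in the loglog setting exactly as was done using Lemma~\ref{lemma:crv}.

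The first step is a loglog analogue of Lemma~\ref{lemma:crv}: if $\Phi(t)=t^q\log(e+t)^{q-1}\log\log(e^e+t)^{q-1+\delta}$ and we set $\Phi_0(t)=t^q\log(e+t)^{q-1}\log\log(e^e+t)^{q-1+\delta/2}$, then there should exist $\gamma\in(0,1)$ with
\[
\|f\|_{\Phi_0,Q}\lesssim \|f\|_{\Phi,Q}^{1-\gamma}\|f\|_{q,Q}^{\gamma}.
\]
Following the strategy of~\cite{CRV2012}, one writes (up to a multiplicative constant) $\Phi_0(t)\leq \Phi(t)^{1-\gamma}\,t^{q\gamma}$ for small enough $\gamma>0$, uses that this pointwise inequality between Young functions transfers to Luxemburg norms via Young's inequality, and concludes the desired submultiplicative splitting.

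The second step is to verify that the associate function $\bar{\Phi}_0$ still belongs to $B_{q'}$, so that Lemma~\ref{lemma:perez-max} applies to $M_{\bar{\Phi}_0}$. A direct computation of the conjugate of a loglog bump yields
\[
\bar{\Phi}_0(t)\approx t^{q'}\log(e+t)^{-1}\log\log(e^e+t)^{-1-\eta(\delta)},
\]
for some $\eta(\delta)>0$ that grows with $\delta$. The $B_{q'}$ integral $\int_c^\infty \bar{\Phi}_0(t)\,t^{-q'-1}\,dt$ converges precisely when $\eta(\delta)$ is positive enough, which is exactly where the quantifier ``$\delta>0$ sufficiently large'' enters the statement. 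With $\bar{\Phi}_0\in B_{q'}$ in hand, one closes the proof via
\[
\Big(\int_{\R^n} M_{\bar{\Phi}_0}(u^{1/q'}\chi_R)^{q'}\,dx\Big)^{1/q'}\lesssim u(R)^{1/q'},
\]
and the geometric factor $2^{\gamma a}$ in $a$ furnishes summability over $a\in\Omega(K)$, giving $[\sigma,u]^\D_{(I_\al^\Sp)^{q',p'}}\lesssim K$. The proof of the dual condition \eqref{eqn:twt2} is entirely symmetric, replacing $\Phi$ by $\Psi$.

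The main obstacle is the bookkeeping in the second step: computing $\bar{\Phi}_0$ to enough accuracy to read off the correct loglog exponent and identifying the minimal threshold on $\delta$ that ensures $B_{q'}$. Once this Orlicz computation is settled, the rest of the argument is a straightforward transcription of the log-bump proof, as the only place where the specific form of $\Phi$ enters the scheme is in the single Orlicz-Hölder step bounding $\dashint_P u\,dx$.
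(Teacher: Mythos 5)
Your reduction to a loglog analogue of Lemma~\ref{lemma:crv} is the right place to look, but the lemma you write down is false, and this is precisely where the loglog case diverges from the log case. You claim a pointwise inequality $\Phi_0(t)\lesssim\Phi(t)^{1-\gamma}t^{q\gamma}$ for some $\gamma>0$. Dividing out $t^q$, this would require
\[
\log(e+t)^{(q-1)\gamma}\lesssim \log\log(e^e+t)^{\,\delta/2-(q-1+\delta)\gamma},
\]
which fails for every $\gamma>0$ as $t\to\infty$, since a positive power of $\log t$ cannot be dominated by any power of $\log\log t$. In the log-bump case of Lemma~\ref{lemma:crv} the analogous inequality reduces to a comparison of powers of $\log(e+t)$ only, so a small $\gamma>0$ works; in the loglog case there is no such power-saving, and hence no exponent $\gamma\in(0,1)$ for which $\|f\|_{\Phi_0,Q}\lesssim\|f\|_{\Phi,Q}^{1-\gamma}\|f\|_{q,Q}^{\gamma}$. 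As a result your geometric factor $2^{\gamma a}$ never appears and the sum over $a\in\Omega(K)$ cannot be closed this way.

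The paper replaces Lemma~\ref{lemma:crv} with the genuinely weaker Lemma~\ref{lem:loglog}: one only has
\[
\|u^{1/q}\|_{\Phi_0,Q}\lesssim \|u^{1/q}\|_{\Phi,Q}\cdot\phi\!\left(\frac{\|u^{1/q}\|_{q,Q}}{\|u^{1/q}\|_{\Phi,Q}}\right),
\qquad \phi(t)=\log(C/t)^{-\kappa},
\]
a logarithmic rather than power-type gain. This forces a second corona stratification: within each $\Ca^a$ one freezes the value of $|P|^{\alpha/n+1/q-1/p}\|u^{1/q}\|_{\Phi,P}(\avgint_P\sigma)^{1/p'}$ into levels $2^c$, $c\ge a$, producing the subfamilies $\Ca^a_c$, and the final bound becomes a double sum $\sum_c 2^c\sum_{a\le c}(1+c-a)^{-\kappa}$. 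That double sum converges exactly when $\kappa>1$, and Lemma~\ref{lem:loglog} gives $\kappa>1$ only when $\delta$ is large. This is the true source of the ``$\delta$ sufficiently large'' hypothesis. Your attribution of it to the $B_{q'}$ condition for $\bar\Phi_0$ is also incorrect: by your own computation $\bar\Phi_0(t)\approx t^{q'}\log(e+t)^{-1}\log\log(e^e+t)^{-1-\eta}$ with $\eta>0$ for \emph{any} $\delta>0$, and $\int^\infty \bar\Phi_0(t)t^{-q'}\,dt/t<\infty$ whenever $\eta>0$, so $B_{q'}$ imposes no extra restriction on $\delta$. In short: the missing ideas are (i) the correct substitute lemma with a logarithmic modulus, and (ii) the extra stratification in $c$ needed to exploit it.
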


We will briefly sketch the proof of Theorem \ref{thm:loglog}, as it is
very similar to the proof of Theorem \ref{thm:twoweighttesting}.  The
main difference is that we must replace Lemma~\ref{lemma:crv} with the
following result which was also proved in~\cite{CRV2012}.

\begin{lemma} \label{lem:loglog} 
Given $q$, $1<q<\infty$,  let 
$$\Phi(t)=t^q\log(e+t)^{q-1}\log\log(e^e+t)^{q-1+\delta}$$
and
$$\Phi_0(t)=t^q\log(e+t)^{q-1}\log\log(e^e+t)^{q-1+\delta/2}.$$
Then 
\[ 
\|u^{\frac1q}\|_{\Phi_0,Q}\lesssim \|u^{\frac1q}\|_{\Phi,Q}\cdot
\phi\Bigg(\frac{\|u^{\frac{1}{q}}\|_{q,Q}}{\|u^{\frac1q}\|_{\Phi,Q}}\Bigg) 
\]
where $\phi(t)=\log(C/t)^{-\kappa}$ with $\kappa, C$ constants.
Moreover if $\delta>0$ is sufficiently large, then $\kappa>1$.  
\end{lemma}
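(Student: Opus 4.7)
The plan is to prove this Luxemburg-norm comparison via a pointwise factorization of the Young functions combined with H\"older's inequality in Orlicz spaces. The key identity is
\[
\Phi_0(t) = \Phi(t)^\theta \cdot \Psi(t)^{1-\theta}, \qquad \theta := \frac{q-1+\delta/2}{q-1+\delta},
\]
where $\Psi(t) := t^q \log(e+t)^{q-1}$ is an intermediate log-bump Young function. This identity is verified by direct expansion: the $t^q$ and $\log(e+t)^{q-1}$ factors combine correctly since $\theta + (1-\theta) = 1$, while the exponent of $\log\log(e^e+t)$ in the product becomes $\theta(q-1+\delta) = q-1+\delta/2$ by the very choice of $\theta$.

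By homogeneity of the Luxemburg norm, first normalize so that $\|u^{1/q}\|_{\Phi,Q} = 1$; equivalently $\dashint_Q \Phi(u^{1/q})\,dx \leq 1$. Set $t_0 := \|u^{1/q}\|_{q,Q}$ and assume $t_0 \leq 1/2$ (the complementary range is handled by the trivial bound $\Phi_0 \leq \Phi$). Writing $\nu := c\,\phi(t_0) = c\,\log(C/t_0)^{-\kappa}$ for a constant $c$ to be chosen later, the definition of the Luxemburg norm reduces the claim to $\dashint_Q \Phi_0(u^{1/q}/\nu)\,dx \leq 1$. Applying H\"older's inequality with exponents $1/\theta$ and $1/(1-\theta)$ to the pointwise factorization yields
\[
\dashint_Q \Phi_0(u^{1/q}/\nu)\,dx \leq \Bigl(\dashint_Q \Phi(u^{1/q}/\nu)\,dx\Bigr)^\theta \Bigl(\dashint_Q \Psi(u^{1/q}/\nu)\,dx\Bigr)^{1-\theta}.
\]

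To estimate the two factors I would use the elementary sub-additivity $\log(e+t/\nu) \lesssim \log(e+1/\nu) + \log(e+t)$ and its $\log\log$ analogue to expand $\Phi(u^{1/q}/\nu)$ and $\Psi(u^{1/q}/\nu)$ into cross-terms, then integrate each using the available controls $\dashint_Q \Phi(u^{1/q})\,dx \leq 1$, $\dashint_Q \Psi(u^{1/q})\,dx \leq 1$ (since $\Psi \leq \Phi$), and $\dashint_Q u\,dx = t_0^q$. For small $\nu$ this produces bounds of the schematic form
\[
\dashint_Q \Phi(u^{1/q}/\nu)\,dx \lesssim \nu^{-q}\log(1/\nu)^{q-1}\log\log(1/\nu)^{q-1+\delta},
\]
\[
\dashint_Q \Psi(u^{1/q}/\nu)\,dx \lesssim \nu^{-q}\bigl(t_0^q \log(1/\nu)^{q-1} + 1\bigr),
\]
and the crucial $L^q$-gain $t_0^q$ appears in the second factor. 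Substituting $\nu = c\log(C/t_0)^{-\kappa}$, the combined estimate involves three distinct logarithmic scales: $\log(1/t_0)$ from $\nu^{-q}$, $\log\log(1/t_0)$ from $\log(1/\nu)$, and $\log\log\log(1/t_0)$ from $\log\log(1/\nu)$, multiplied by a $t_0^{q(1-\theta)}$ contribution from the $\Psi$-factor.

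The main obstacle is the delicate balancing across these three scales to achieve $\kappa > 1$. Since $q(1-\theta) = q\delta/(2(q-1+\delta))$ is only moderately positive, the gain $t_0^{q(1-\theta)}$ is small and must just overcome the loss $\log(1/t_0)^{\kappa q}$ coming from $\nu^{-q}$ after it is raised to the $(1-\theta)$ power. The required inequality holds uniformly for $t_0 \in (0, 1/2]$ provided $\kappa$ is tuned in terms of $\delta$; a direct computation of the resulting exponent balance shows that $\kappa$ can be chosen strictly greater than $1$ precisely when $\delta$ exceeds a threshold depending on $q$, which is the ``$\delta$ sufficiently large'' hypothesis of the lemma. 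The conclusion then follows by taking $c$ large enough to absorb the polylogarithmic prefactors.
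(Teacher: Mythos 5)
Your pointwise factorization $\Phi_0=\Phi^{\theta}\Psi^{1-\theta}$ and the H\"older step are correct as written, but the argument cannot close, and the obstruction is not the ``delicate balancing'' you defer to the end --- it is the $\Psi$-factor. After H\"older the full prefactor $\nu^{-q}$ survives in the product (since $\nu^{-q\theta}\cdot\nu^{-q(1-\theta)}=\nu^{-q}$), and your bound for the second factor necessarily contains the additive term coming from $\dashint_Q\Psi(u^{1/q})\,dx\le 1$; this is all one can say in general, because $\dashint_Q\Psi(u^{1/q})\,dx$ need not be small even when $t_0=\|u^{1/q}\|_{q,Q}$ is tiny (for a single spike $u^{1/q}=\lambda\chi_E$ normalized so that $\dashint_Q\Phi(u^{1/q})\,dx=1$ one computes $\dashint_Q\Psi(u^{1/q})\,dx=\Psi(\lambda)/\Phi(\lambda)=\log\log(e^e+\lambda)^{-(q-1+\delta)}$, which is enormously larger than $t_0^q$). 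Hence the right-hand side of your combined estimate is at least $\nu^{-q}$, so it can never certify $\dashint_Q\Phi_0(u^{1/q}/\nu)\,dx\le 1$ for any $\nu<1$, let alone for $\nu\simeq\log(C/t_0)^{-\kappa}$. The exponent computation you sketch keeps only the $t_0^{q(1-\theta)}$ contribution and silently discards this dominant ``$+1$'' term; were that legitimate, it would yield a gain that is a positive power of $t_0$, which is strictly stronger than the lemma and false --- the spike example shows the true gain is only $\log\log(e^e+\lambda)^{-\delta/(2q)}\simeq\log(C/t_0)^{-\delta/(2q)}$, consistent with $\kappa\simeq\delta/(2q)$ and with the hypothesis that $\delta$ be large to get $\kappa>1$.

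For comparison, the paper does not prove this lemma at all: it quotes it from \cite{CRV2012}, where the proof uses a mechanism that global Orlicz interpolation cannot reproduce, namely truncation at a level set. One normalizes $\|u^{1/q}\|_{\Phi,Q}=1$, fixes a height $\Lambda=\Lambda(t_0)$, and splits $Q$ into $\{u^{1/q}>\Lambda\}$ and $\{u^{1/q}\le\Lambda\}$. On the first set the monotone ratio $\Phi_0(s)/\Phi(s)=\log\log(e^e+s)^{-\delta/2}$ is at most $\log\log(e^e+\Lambda)^{-\delta/2}$, and one uses $\dashint_Q\Phi(u^{1/q})\,dx\le 1$; on the second set one bounds $\Phi_0(s)$ by a $\Lambda$-dependent polylogarithm times $s^q$ and uses $\dashint_Q u\,dx=t_0^q$. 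Optimizing $\Lambda$ in terms of $t_0$ produces exactly the factor $\phi(t_0)=\log(C/t_0)^{-\kappa}$. The missing idea in your proposal is precisely this localization: the gain must be extracted from the region where $u^{1/q}$ is large, where $\Phi_0$ is genuinely smaller than $\Phi$, rather than from a H\"older interpolation between $\Phi$ and a smaller Young function over all of $Q$.
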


\begin{proof}[Sketch of the proof of Theorem \ref{thm:loglog}] 
We use the same notation as in Theorem \ref{thm:twoweighttesting}.
The proof is identical until estimate \eqref{finalsum2}:
\begin{multline*}
\lefteqn{\left(\int_R(I_\al^{\Sp(R)}u)^{p'}\sigma\,dx\right)^{1/p'}}\\
\lesssim \sum_{a\in \Omega(K)}\sum_{b=0}^\infty 2^{-b}
\sum_{k=0}^\infty2^{-ck}(k+1)\left(\sum_{P\in \Ca^a}
 \Big(|P|^{\frac{\al}{n}}\dashint_Pu\,dx\Big)^{p'}\cdot \sigma(P)\right)^{1/p'}.
\end{multline*}
Once again we can sum the series in $b$ and $k$, and the problem is to
sum the series in $a$, where
$$a\in \Omega(K)=\Z\cap(-\infty,\log_2 K]. $$

At this stage we use Lemma~\ref{lem:loglog} to estimate the last sum.
We have that
\begin{multline*}
  \Big(|P|^{\frac{\al}{n}}\dashint_Pu\,dx\Big)^{p'}
\sigma(P)=|P|^{\frac{\al}{n}p'}\left(\dashint_Pu\,dx\right)^{p'}\sigma(P)
 \lesssim |P|^{\frac{\al}{n}p'}\|u^{\frac{1}{q}}\|_{\Phi_0,P}^{p'}
\|u^{\frac{1}{q'}}\|_{\bar{\Phi}_0,P}^{p'}\sigma(P)\\
\lesssim\left(|P|^{\frac{\al}{n}+\frac{1}{q}-\frac{1}{p}}
\|u^{\frac1q}\|_{\Phi,P}\Big(\dashint_P
  \sigma\,dx\Big)^{\frac{1}{p'}}\right)^{p'}\cdot
\phi\bigg(\frac{\|u^{\frac{1}{q}}\|_{q,P}}{\|u^{\frac1q}\|_{\Phi,P}}\bigg)^{p'}\cdot
\|u^{\frac{1}{q'}}\|_{\bar{\Phi}_0,P} |P|^{\frac{p'}{q'}}.
\end{multline*}
To estimate this term we need to further divide the sum over $P\in
\Ca^a$.  Recall that if $P\in \Ca^a$, then
$$2^{a-1}<|P|^{\frac{\al}{n}+\frac{1}{q}-\frac{1}{p}}\Big(\,\dashint_P
u\,dx\Big)^{\frac{1}{q}}
\Big(\,\dashint_P \sigma\,dx\Big)^{\frac{1}{p'}}\leq 2^a.$$
Moreover, if $P\in \Ca^a$ then since $\|u^{\frac{1}{q}}\|_{q,P}
\leq \|u^{\frac{1}{q}}\|_{\Phi,P}$, there exists an integer $c$,
$a\leq c\leq \log_2K$, such that
\begin{equation}\label{cfreeze}
2^{c-1}<|P|^{\frac{\al}{n}+\frac{1}{q}-\frac{1}{p}}\|u^{\frac1q}\|_{\Phi,P}
\Big(\dashint_P \sigma\,dx\Big)^{\frac{1}{p'}}\leq 2^c.
\end{equation}
For each such $a$ and $c$ let $\Ca^a_c$ be the collection of all cubes
$P \in \Ca^a$ such that \eqref{cfreeze} holds.  Then we can estimate
as follows:
\begin{align*}
\Bigg(\sum_{P\in \Ca^a}
\Big(|P|^{\frac{\al}{n}}\dashint_Pu\,dx\Big)^{p'}\cdot
\sigma(P)\Bigg)^{\frac{1}{p'}}
 &=\Bigg(\sum_{\substack{c\in \Omega(K)\\ c\geq a}}\sum_{P\in \Ca^a_c}
 \Big(|P|^{\frac{\al}{n}}\dashint_Pu\,dx\Big)^{p'}\cdot \sigma(P)\Bigg)^{\frac{1}{p'}}\\
&\lesssim \sum_{\substack{c\in \Omega(K)\\ c\geq a}}\Bigg(\sum_{P\in
  \Ca^a_c}
\left(|P|^{\frac{\al}{n}+\frac{1}{q}-\frac{1}{p}}\|u^{\frac1q}\|_{\Phi,P}
\Big(\dashint_P \sigma\,dx\Big)^{\frac{1}{p'}}\right)^{p'} \\
& \qquad \cdot \phi\left(\frac{\|u^{\frac1q}\|_{q,P}}{\|u^{\frac1q}\|_{\Phi,P}}\right)^{p'}
\times \|u^{\frac{1}{q'}}\|_{\bar{\Phi}_0,P} |P|^{\frac{p'}{q'}} \Bigg)^{\frac{1}{p'}}\\
&\lesssim\sum_{\substack{c\in \Omega(K)\\c\geq a}}2^c
\Bigg(\sum_{P\in \Ca^a_c} \phi\Bigg(\frac{\|u^{\frac1q}\|_{q,P}}{\|u^{\frac1q}\|_{\Phi,P}}\Bigg)^{p'} 
 \cdot \|u^{\frac{1}{q'}}\|_{\bar{\Phi}_0,P}^{p'}|P|^{\frac{p'}{q'}}\Bigg)^{\frac{1}{p'}}.
\end{align*}

If $P\in \Ca^a_c$, then 
\[
\frac{\|u^{\frac1q}\|_{q,P}}{\|u^{\frac1q}\|_{\Phi,P}} \simeq 2^{a-c},
\]
and so
$$\phi\Bigg(\frac{\|u^{\frac1q}\|_{q,P}}{\|u^{\frac1q}\|_{\Phi,P}}\Bigg) 
\simeq \frac{1}{(1+c-a)^\kappa }.$$
Given this  the rest of proof proceeds exactly as before:  we have
that
$$\Bigg(\sum_{P\in \Ca^a_c} \|u^{\frac{1}{q'}}\|_{\bar{\Phi}_0,P}^{p'}
|P|^{\frac{p'}{q'}}\Bigg)^{\frac{1}{p'}}
\lesssim \bigg( \int_{\R^n}
M_{\bar{\Phi}_0}(u^{\frac{1}{q'}}\chi_R)^{q'}\,dx\bigg)^{1/q'}\lesssim u(R)^{1/q'},$$
since
$$\bar{\Phi}_0(t)\simeq \frac{t^{q'}}{\log(e+t)\log\log(e^e+t)^{1+\frac{\delta}{2(q-1)}}}$$
belongs to $B_{q'}$.  Moreover, the double sum
$$\sum_{a\in \Omega(K)}\sum_{\substack{c\in \Omega(K)\\ c\geq
      a}}\frac{2^c}{(1+c-a)^\kappa}
= \sum_{c=-\infty}^{\log_2 K} 2^c\sum_{a=-\infty}^c
\frac{1}{(1+c-a)^\kappa} \lesssim K
$$
converges if we assume that $\delta$ is large enough that  $\kappa>1$.
\end{proof}

\bibliographystyle{plain}
\bibliography{CruzUribeMoen}

\end{document}